\newcommand\myeq{\mathrel{\stackrel{\makebox[0pt]{\mbox{\normalfont\tiny loc}}}{=}}}
\newcommand\opteq[1]{\mathrel{\mathpalette\opt@eq{#1}}}
\newcommand{\opt@eq}[2]{%
  \begingroup
  \sbox\z@{$#1#2$}%
  \sbox\tw@{\resizebox{!}{.5\ht\z@}{$\m@th#1($}}%
  \nonscript\hskip-\wd\tw@
  \mkern1mu
  \raisebox{-.35\ht\z@}[0pt][0pt]{\resizebox{!}{.5\ht\z@}{$\m@th#1($}}%
  \mkern-1mu
  {#2}%
  \mkern-1mu
  \raisebox{-.35\ht\z@}[0pt][0pt]{\resizebox{!}{.5\ht\z@}{$\m@th#1)$}}%
  \mkern1mu
  \nonscript\hskip-\wd\tw@
  \endgroup
}
\newtheorem{theorem}{Theorem}[section]
\newtheorem*{theorem*}{Theorem}
\newtheorem{theorem-non}{Theorem}
\newtheorem{lemma-non}{Lemma}
\theoremstyle{definition} 
\newtheorem{thm}{Theorem}
\theoremstyle{definition} 
\newtheorem{corollarynon}{Corollary}
\newtheorem{pbl}{Problem}
\newtheorem{conjecture-non}{Conjecture}
\newtheorem{corollary-non}{Corollary}
\newtheorem{proposition}[theorem]{Proposition}
\newtheorem{lemma}[theorem]{Lemma}
\newtheorem*{lemma*}{Lemma}
\newtheorem{corollary}[theorem]{Corollary}
\newtheorem*{conjecture*}{Conjecture}
\theoremstyle{definition}
\newtheorem{definition}[theorem]{Definition}
\theoremstyle{remark}
\newtheorem{remark}[theorem]{Remark}
\DeclareMathOperator{\im}{im}
\DeclareMathOperator{\rank}{rank}
\numberwithin{equation}{section}
\begin{document}
\title[Eder M. Correa]{$t$-Gauduchon Ricci-flat metrics on non-K\"{a}hler Calabi-Yau manifolds}

\author{Eder M. Correa}


\address{{IMECC-Unicamp, Departamento de Matem\'{a}tica. Rua S\'{e}rgio Buarque de Holanda 651, Cidade Universit\'{a}ria Zeferino Vaz. 13083-859, Campinas-SP, Brazil}}
\address{E-mail: {\rm ederc@unicamp.br}}

\begin{abstract} 
We construct new examples of $t$-Gauduchon Ricci-flat metrics, for all $t<1$, on compact non-K\"{a}hler Calabi-Yau manifolds defined by certain principal torus bundles over rational homogeneous varieties with Picard number $\varrho(X) > 1$. As an application, we provide a detailed description of new examples of Strominger-Bismut Ricci-flat Hermitian metrics, Lichnerowicz Ricci-flat Hermitian metrics, and balanced Hermitian metrics on principal $T^{2}$-bundles over the Fano threefold ${\mathbbm{P}}(T_{{\mathbbm{P}^{2}}})$.
\end{abstract}

\maketitle

\hypersetup{linkcolor=black}
\tableofcontents

\hypersetup{linkcolor=black}
\section{Introduction}

It is known that the infinite-dimensional affine space of Hermitian connections on the tangent
bundle of a Hermitian manifold $(M, g, J)$ supports a distinguished one-parameter family of Hermitian connections (a.k.a. canonical connections \cite{gauduchon11hermitian}) canonically depending on the complex
structure $J$ and the Riemannian metric $g$. Denoting by $\nabla^{t}$ this one-parameter family of Hermitian connections, we highlight the cases: 
\begin{enumerate}
\item The {\textit{Chern connection}} $\nabla^{Ch} = \nabla^{1}$: it is the unique connection compatible with the Hermitian metric and the holomorphic structure \cite{chern1967complex};
\item The {\textit{Lichnerowicz connection}} $\widehat{\nabla}^{LC} = \nabla^{0}$: it is defined by the restriction of the complexified Levi-Civita connection to $T^{1,0}M$ \cite{lichnerowicz1955theorie};
\item The {\textit{Strominger-Bismut connection}} $\nabla^{SB} = \nabla^{-1}$: it is the unique Hermitian connection with totally
skew-symmetric torsion \cite{Bismut1989}, \cite{strominger1986superstrings}.
\end{enumerate}
As in the case of Riemannian manifolds, we would like to understand the differential geometry of each of these connections in terms of their curvature properties. The relations between the curvature tensors of $\nabla^{Ch}$, $\nabla^{SB}$, and $\widehat{\nabla}^{LC}$ have been extensively studied, see for instance \cite{Gray1}, \cite{Tricerri}, \cite{Gaud}, \cite{Apostolov}, \cite{Liu1, Liu2}, \cite{Yang2, Yang1}, \cite{Angella}, \cite{WangYang}, \cite{WangYangZheng}, \cite{HeLiuYang}, \cite{Broder}. An important problem in this context is related to curvature conditions that generalize the concept of Calabi-Yau manifolds\footnote{In this paper, a Calabi-Yau manifold is a complex manifold with $c_{1}(M) = 0$.}to the non-K\"{a}hler context. More precisely, the problem can be formulated as follows. Let $(M,g,J)$ be a compact Hermitian manifold. Since each Hermitian connection $\nabla^{t}$ on the holomorphic tangent bundle induces a Hermitian connection on the anti-canonical line bundle ${\bf{K}}_{M}^{-1}$, denoting by $R(\nabla^{t})$ the curvature tensor of $\nabla^{t}$ and denoting by $\rho(\Omega,t)=\sqrt{-1}{\rm{tr}}(R(\nabla^{t}))$ its associated Ricci form, such that $\Omega = g(J \otimes {\rm{id}})$, from \cite[Remark 5]{gauduchon11hermitian}, see also \cite{GRANTCHAROV}, we have 
\begin{equation}
\label{differenceRicci}
\rho(\Omega,t) - \rho(\Omega,u) = \frac{t-u}{2}{\rm{d}}\delta \Omega,
\end{equation}
for every $t,u \in \mathbbm{R}$, such that $\delta = -\ast  {\rm{d}}  \ast$ is the codifferential associated with $g$. In this setting, denoting by $\rho_{1}(\Omega,t)$ the $(1,1)$-component of $\rho(\Omega,t)$, we can show that 
\begin{equation}
\rho_{1}(\Omega,t) =  \rho_{1}(\Omega,1)+ \frac{t-1}{2}\big ( \partial \partial^{\ast} \Omega + \bar{\partial}\bar{\partial}^{\ast}\Omega\big),
\end{equation}
where $\rho_{1}(\Omega,1) \myeq -\sqrt{-1} \partial \bar{\partial} \log(\det(\Omega))$, see for instance \cite[Proposition 4.1]{fu2022scalar}. From above, we say that $(M,g,J)$ is $t$-{\textit{Gauduchon Ricci-flat}} \cite{WangYang}, for some $t \in \mathbbm{R}$, if
\begin{equation}
\label{sRicciflat}
\rho_{1}(\Omega,t) = 0.
\end{equation}
We notice that Eq. (\ref{sRicciflat}) implies $c_{1}^{AC}(M) = 0$ (first {\textit{Aeppli-Chern class}, e.g. \cite{angella2014cohomological}), since 
\begin{equation}
c_{1}^{AC}(M) = \bigg [ \frac{\rho_{1}(\Omega,t)}{2\pi}\bigg] \in H^{1,1}_{A}(M) := \frac{\ker(\partial \bar{\partial}) \cap \Omega^{1,1}(M)}{\im(\partial) \cap \Omega^{1,1}(M) + \im(\bar{\partial}) \cap \Omega^{1,1}(M)}.
\end{equation}
From above, inspired by the Calabi-Yau theorem \cite{calabi2015kahler}, \cite{yau1977calabi,yau1978ricci}, and by \cite[Problem 1.8]{Liu2}, we have the following problem:
\begin{pbl}
\label{problem1}
On a compact complex manifold $(M,J)$, s.t. $c_{1}^{AC}(M) = 0$ (or\footnote{It is worth pointing out that $c_{1}(M) = 0 \Rightarrow c_{1}^{AC}(M) = 0$, e.g. \cite[Corllary 1.3]{Liu2}.} $c_{1}(M) = 0$), does there exist a $t$-Gauduchon Ricci-flat Hermitian metric $\Omega$, for some $t \in \mathbbm{R}$?
\end{pbl}
In general, for example, when $\Omega$ is non-K\"{a}hler, finding solutions to Eq. (\ref{sRicciflat}) is a highly non-trivial and challenging problem since the underlying partial differential equation is non-Monge-Amp\`{e}re type and contains non-elliptic terms. We observe that Problem 1 has a positive answer if 
\begin{enumerate}
\item $t=1$ and $M$ is a compact K\"{a}hler manifold \cite{yau1977calabi,yau1978ricci};
\item $t = 0$ and $M$ is a compact Kähler Calabi-Yau surface or a Hopf surface \cite{HeLiuYang};
\item $t < 1$ and $M$ is a suspension of a compact Sasaki-Einstein manifold \cite{Broder}, see also \cite{Liu2}, \cite{WangYang}, and \cite{correa2023levi};
\item $t=-1$ and $M$ is a total space of a principal $T^{2r}$-bundle over a K\"{a}hler-Einstein Fano manifold \cite{GRANTCHAROV, grantcharov2011geometry}.
\end{enumerate}
Motivated by Problem \ref{problem1}, from the construction provided in \cite{GRANTCHAROV}, and the ideas introduced in \cite{poddar2018group}, \cite{correa2023deformed}, we prove the following theorem.
\begin{thm}
\label{theoremA}
Let $X$ be a complex flag variety with Picard number $\varrho(X)>1$. Then, for every $k \in \mathbbm{Z}^{\times}$ and every real number $t < 1$, there exists $\lambda(k,t) \in \mathbbm{R}_{>0}$, and a short exact sequence of holomorphic vector bundles 
\begin{center}
\begin{tikzcd} 0 \arrow[r] & \mathscr{O}_{X}(k)  \arrow[r] & {\bf{E}}  \arrow[r]  & {\bf{F}} \arrow[r] & 0 \end{tikzcd}
\end{center}
satisfying the following properties:
\begin{enumerate}
\item[(1)] The holomorphic vector bundle ${\bf{F}} \to X$ admits a Hermitian structure ${\bf{h}}$ solving the Hermitian Yang-Mills equation
\begin{equation}
\label{HYM}
\sqrt{-1}\Lambda_{\omega_{0}}(F({\bf{h}})) = 0,
\end{equation}
where $[\omega_{0}] = \lambda(k,t)c_{1}(X)$. In particular, ${\bf{F}}$ is $[\omega_{0}]$-polystable;
\item[(2)] The manifold underlying the unitary frame bundle ${\rm{U}}({\bf{E}})$ is a compact non-K\"{a}hler Calabi-Yau manifold which admits a $t$-Gauduchon Ricci-flat Hermitian metric $\Omega$, such that the natural projection map
\begin{equation}
({\rm{U}}({\bf{E}}),\Omega) \to (X,\omega_{0}),
\end{equation}
is a Hermitian submersion;
\item[(3)] If $k = - {\bf{I}}(X)$, where ${\bf{I}}(X)$ is the Fano index of $X$, then the complex manifold underlying $\mathscr{O}_{X}(k)$ admits a complete Calabi-Yau metric.
\end{enumerate}
\end{thm}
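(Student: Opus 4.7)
The plan is to adapt the principal-bundle construction of \cite{GRANTCHAROV}, further developed in \cite{poddar2018group,correa2023deformed}, to a setting where the Picard-number hypothesis $\varrho(X) > 1$ allows slope-zero bundles $\mathbf{F}$ other than the trivial one. I would first exploit $\varrho(X) > 1$ to find primitive classes $c \in \mathrm{NS}(X)$ satisfying $c \cdot c_{1}(X)^{n-1} = 0$ (with $n = \dim_{\mathbbm{C}} X$) and $c \neq 0$, and then take $\mathbf{F}$ to be a direct sum $\bigoplus_{i} L_{i}$ of line bundles whose first Chern classes lie in the $\mathbbm{R}$-span of such classes and sum (together with $c_{1}(\mathscr{O}_{X}(k))$) to a multiple of $c_{1}(X)$. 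For any K\"ahler class $[\omega_{0}] = \lambda\, c_{1}(X)$, the slope of $\mathbf{F}$ then vanishes, and the tensor product of the canonical Hermitian--Einstein metrics on the factors $L_{i}$ solves \eqref{HYM}, establishing (1) together with polystability.

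Next I would form $\mathbf{E}$ as an extension of $\mathbf{F}$ by $\mathscr{O}_{X}(k)$ and equip its unitary frame bundle $\mathrm{U}(\mathbf{E})$ with the integrable complex structure of \cite{GRANTCHAROV,correa2023deformed}: the Chern connection of the natural Hermitian metric on $\mathbf{E}$ provides a horizontal distribution which, coupled with a Samelson-type invariant complex structure on the fiber $\mathrm{U}(r)$, yields an integrable almost complex structure on $\mathrm{U}(\mathbf{E})$ precisely because the Chern curvature is of type $(1,1)$. The non-K\"ahler Calabi--Yau property $c_{1}(\mathrm{U}(\mathbf{E})) = 0$ is a purely topological Chern-class identity that the choice of $\mathbf{F}$ in the previous step is designed to guarantee.

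The main obstacle, which I expect to be the most delicate step, is the verification in (2) that the coupled metric $\Omega = \pi^{\ast}\omega_{0} + \Omega_{\mathrm{vert}}$ --- with $\Omega_{\mathrm{vert}}$ a bi-invariant $(1,1)$-form on the $\mathrm{U}(r)$-fibers --- is $t$-Gauduchon Ricci-flat. Via \eqref{differenceRicci} this reduces to computing two quantities: the Chern--Ricci term $\rho_{1}(\Omega,1)$, which by the HYM property of $\mathbf{h}$ and the explicit curvature of $\mathscr{O}_{X}(k)$ is a scalar multiple of $\pi^{\ast}\omega_{0}$ (with coefficient depending on $k$ and $\lambda$); and the torsion contribution $\partial\partial^{\ast}\Omega + \bar{\partial}\bar{\partial}^{\ast}\Omega$, which by the bi-invariance of $\Omega_{\mathrm{vert}}$ and a principal-bundle computation using the Koszul-type formula is also a multiple of $\pi^{\ast}\omega_{0}$ with strictly positive coefficient. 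The equation $\rho_{1}(\Omega,t) = 0$ then becomes a single linear constraint on $\lambda$ whose unique positive solution $\lambda(k,t) \in \mathbbm{R}_{>0}$ exists precisely when $t < 1$; at the endpoint $t = 1$ the torsion coefficient drops out and the constraint becomes incompatible with the non-K\"ahler context.

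Assertion (3) follows classically: when $k = -\mathbf{I}(X)$ one has $\mathscr{O}_{X}(k) \cong \mathbf{K}_{X}$, and since $X$ is a Fano K\"ahler--Einstein manifold, Calabi's ansatz on the total space of $\mathbf{K}_{X}$ produces a complete Ricci-flat K\"ahler metric by reducing the K\"ahler--Einstein equation to an ODE on radially invariant potentials along the fiber direction.
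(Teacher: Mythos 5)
Your proposal follows the paper's route: $\varrho(X)>1$ supplies non-trivial line bundles of $[\omega_{0}]$-degree zero (the paper's ${\rm{Pic}}^{0}_{\omega_{0}}(X)\cong H^{2}_{\omega_{0}}(X,\mathbbm{Z})_{{\text{prim}}}$), $\mathbf{F}$ is a direct sum of such bundles so that its diagonal Chern curvature is primitive and \eqref{HYM} holds term by term (polystability then coming from Kobayashi--Hitchin), the $t$-Gauduchon equation on the total space reduces to the scalar condition $\tfrac{1}{\lambda}+\tfrac{t-1}{2}\tfrac{k^{2}\dim_{\mathbbm{C}}X}{\lambda^{2}\mathbf{I}(X)^{2}}=0$, solved by $\lambda(k,t)=\tfrac{1-t}{2}\tfrac{k^{2}\dim_{\mathbbm{C}}X}{\mathbf{I}(X)^{2}}>0$ precisely when $t<1$, and item (3) is the Calabi ansatz on $\mathbf{K}_{X}$.

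The one step that would fail as written is your treatment of the fiber. The reduction of $\rho_{1}(\Omega,t)=0$ to a single scalar constraint on $\lambda$ rests on the torus-bundle identities $\delta\Omega=\pi^{\ast}(\delta\omega_{0})+\sum_{j}\pi^{\ast}(\Lambda_{\omega_{0}}(\psi_{j}))\Theta_{j}$ and $\rho(\Omega,t)=\pi^{\ast}\big(\rho_{\omega_{0}}+\tfrac{t-1}{2}\sum_{j}\Lambda_{\omega_{0}}(\psi_{j})\psi_{j}\big)$, which require an \emph{abelian} structure group. The paper therefore takes $\mathbf{E}$ to be the split extension $\mathscr{O}_{X}(k)\oplus\mathbf{F}_{1}\oplus\cdots\oplus\mathbf{F}_{2r-1}$ with every summand a line bundle, so that ${\rm{U}}(\mathbf{E})$ is a principal $T^{2r}$-bundle (the even number of summands is what permits $\mathbbm{J}(\Theta_{2j-1})=-\Theta_{2j}$). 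If you instead work with the full unitary frame bundle with fiber ${\rm{U}}(m)$ and a Samelson-type left-invariant complex structure, the fiber is non-abelian and non-K\"ahler: $\delta\Omega$ acquires vertical terms involving the structure constants, ${\rm{d}}\delta\Omega$ has purely vertical $(1,1)$-components, and the Chern--Ricci form picks up a contribution from the canonical bundle of the fiber, so neither summand in your decomposition is a multiple of $\pi^{\ast}\omega_{0}$ and the linear equation for $\lambda$ does not close up. Two smaller remarks: your extra normalization that $\sum_{i}c_{1}(L_{i})+c_{1}(\mathscr{O}_{X}(k))$ be a multiple of $c_{1}(X)$ (equivalently $\sum_{i}c_{1}(L_{i})=0$, given primitivity) is harmless but unnecessary, since $c_{1}({\rm{U}}(\mathbf{E}))=0$ already follows from $\rho(\Omega,1)=\pi^{\ast}\rho_{0}=\tfrac{\mathbf{I}(X)}{k}{\rm{d}}\Theta_{1}$ being exact; and the non-K\"ahlerness of ${\rm{U}}(\mathbf{E})$ is a separate topological fact (Blanchard-type obstruction) that still has to be invoked.
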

The above theorem shows that, in the particular setting of rational homogeneous varieties, the result given in \cite[Proposition 9]{GRANTCHAROV} can be extended to all canonical connections $\nabla^{t}$, such that $t \in (-\infty,1)$. In particular, it highlights some aspects of Yang-Mills theory underlying the ideas introduced in \cite{GRANTCHAROV} and \cite{poddar2018group}. Also, it gives a positive answer for the question posed in Problem \ref{problem1} for certain classes of compact Hermitian manifolds defined by principal torus bundles. Further, we obtain from Theorem \ref{theoremA} a huge class of Hermitian connections $\nabla^{t}$ satisfying 
\begin{equation}
{\rm{Hol}}^{0}(\nabla^{t}) \subseteq {\rm{SU}}(n).
\end{equation}
It is worth to point out that, the result given in item (3) of Theorem \ref{theoremA} is a consequence of the well-known Calabi ansatz technique \cite{calabi1979metriques}.

The key point to solve the $t$-Gauduchon Ricci-flat equation in the context of Theorem \ref{theoremA} is the description provided in \cite{correa2023deformed} for the $\mathbbm{Z}$-module of primitive $(1,1)$-forms
\begin{equation}
H^{2}_{\omega_{0}}(X,\mathbbm{Z})_{{\text{prim}}} := \ker \Big (\Lambda_{\omega_{0}} \colon H^{2}(X,\mathbbm{R}) \to \mathbbm{R} \Big ) \cap H^{2}(X,\mathbbm{Z}),
\end{equation}
when $X$ is a rational homogeneous variety equipped with some $G$-invariant K\"{a}hler metric $\omega_{0}$. As a consequence of Theorem \ref{theoremA}, we obtain the following result.
\begin{corollarynon}
In the setting of the previous theorem, we have the following:
\begin{enumerate}
\item If $t = -1$, then $({\rm{U}}({\bf{E}}),\Omega)$ is Strominger-Bismut Ricci-flat; 
\item If $t = 0$, then $({\rm{U}}({\bf{E}}),\Omega)$ is Lichnerowicz Ricci-flat.
\end{enumerate}
\end{corollarynon}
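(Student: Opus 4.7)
The plan is to derive this corollary as a direct specialization of Theorem \ref{theoremA} to the two distinguished values $t = -1$ and $t = 0$ of the Gauduchon parameter. First, I would recall from the list of canonical connections given in the introduction that $\nabla^{SB} = \nabla^{-1}$ is the Strominger-Bismut connection and $\widehat{\nabla}^{LC} = \nabla^{0}$ is the Lichnerowicz connection. Both of these values lie in the open half-line $(-\infty,1)$ covered by the hypothesis of Theorem \ref{theoremA}, so the construction applies verbatim.

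For item (1), I would apply Theorem \ref{theoremA} with $t = -1$ to produce a Hermitian metric $\Omega$ on $\mathrm{U}(\mathbf{E})$ satisfying $\rho_{1}(\Omega,-1) = 0$. By the definition of the $t$-Gauduchon Ricci-flat condition in Eq. (\ref{sRicciflat}), this is exactly the vanishing of the $(1,1)$-component of the trace curvature $\sqrt{-1}\,\mathrm{tr}(R(\nabla^{SB}))$ of the induced Strominger-Bismut connection on ${\bf{K}}_{M}^{-1}$, which is the Strominger-Bismut Ricci-flatness condition. For item (2), the same argument with $t = 0$ yields $\rho_{1}(\Omega,0) = 0$, i.e.\ the Lichnerowicz Ricci-flatness of the corresponding metric.

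Since all the analytic and geometric work — constructing the extension $0 \to \mathscr{O}_{X}(k) \to \mathbf{E} \to \mathbf{F} \to 0$, the Hermitian Yang-Mills solution on $\mathbf{F}$, and the $t$-Gauduchon Ricci-flat Hermitian metric on the frame bundle — is already packaged in the statement of Theorem \ref{theoremA}, there is no residual obstacle: the corollary is essentially a relabeling of the two most classically significant values of $t$ in the Gauduchon family $\{\nabla^{t}\}_{t \in \mathbbm{R}}$.
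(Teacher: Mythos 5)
Your proposal is correct and matches the paper exactly: the paper states the corollary as an immediate consequence of Theorem \ref{Theorem1} (with no separate written proof), relying precisely on the fact that $t=-1,0$ lie in $(-\infty,1)$ and on the definitions identifying $\rho_{1}(\Omega,-1)=0$ and $\rho_{1}(\Omega,0)=0$ with Strominger-Bismut and Lichnerowicz Ricci-flatness, respectively. Nothing further is needed.
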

In the setting of item (1), it follows that ${\rm{Hol}}^{0}(\nabla^{SB}) \subseteq {\rm{SU}}(n)$. The class of Hermitian manifolds given in item (1) are known in the literature as {\textit{Calabi-Yau with torsion}}, these manifolds appear in the study of heterotic string theory \cite{hull1986superstring}, \cite{strominger1986superstrings}, \cite{li2005existence}. The description provided in \cite{correa2023deformed} for the $\mathbbm{Z}$-module of primitive $(1,1)$-forms also allows us to obtain several examples of compact Hermitian manifolds $(M,\Omega,J)$ satisfying the balanced condition, i.e.,
\begin{equation}
{\rm{d}}\Omega^{n-1} = 0.
\end{equation}
Balanced metrics were introduced in \cite{michelsohn1982existence} and provide a generalization of K\"{a}hler metrics, it has been extensively studied, see for instance \cite{fino2023balanced} \cite{bedulli2017parabolic}, \cite{fino2019astheno}, \cite{fu2012balanced}, \cite{phong2019flow}, \cite{alonso2022existence}, and references therein. In this setting, we prove the following.
\begin{thm}
Let $X$ be a complex flag variety with Picard number $\varrho(X)>1$. Then, for every $r \in \mathbbm{Z}_{>0}$ and every $G$-invariant K\"{a}hler metric $\omega_{0}$, there exists a holomorphic vector bundle ${\bf{F}} \to X$, such that $\rank({\bf{F}}) = 2r$, satisfying the following properties:
\begin{enumerate}
\item ${\bf{F}} \to X$ admits a Hermitian structure ${\bf{h}}$ solving the Hermitian Yang-Mills equation
\begin{equation}
\sqrt{-1}\Lambda_{\omega_{0}}(F({\bf{h}})) = 0.
\end{equation}
 In particular, ${\bf{F}}$ is $[\omega_{0}]$-polystable;
\item The manifold underlying the unitary frame bundle ${\rm{U}}({\bf{F}})$ is a compact complex non-K\"{a}hler manifold which admits a balanced Hermitian metric $\Omega$, such that the natural projection map
\begin{equation}
({\rm{U}}({\bf{F}}),\Omega) \to (X,\omega_{0}),
\end{equation}
is a Hermitian submersion.
\end{enumerate}
\end{thm}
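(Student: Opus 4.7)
The plan is to adapt the construction from Theorem \ref{theoremA} to the balanced setting, where the $t$-Gauduchon Ricci-flat condition is relaxed to the much milder requirement $d\Omega^{n-1}=0$. This relaxation removes the obstruction that forced the precise configuration of Theorem \ref{theoremA} through a short exact sequence, and instead allows the free choice of rank $2r$ for any $r\in\mathbbm{Z}_{>0}$. The core idea for item (1) is to take $\mathbf{F}$ as a direct sum of $2r$ holomorphic line bundles whose natural Chern curvatures are $\omega_{0}$-primitive $(1,1)$-forms, which makes the Hermitian Yang-Mills equation automatic at each summand.

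First I would invoke the explicit description of the $\mathbbm{Z}$-module $H^{2}_{\omega_{0}}(X,\mathbbm{Z})_{\text{prim}}$ from \cite{correa2023deformed}. Since $\varrho(X)>1$, this module has positive rank, so one can select integral primitive classes $\alpha_{1},\ldots,\alpha_{2r}\in H^{2}_{\omega_{0}}(X,\mathbbm{Z})_{\text{prim}}$; in fact, a single nontrivial primitive class together with suitable duals or integer multiples already suffices. Let $L_{i}\to X$ be the holomorphic line bundle with $c_{1}(L_{i})=\alpha_{i}$, equipped with the canonical $G$-invariant Hermitian structure $h_{i}$ whose Chern curvature $\omega_{i}$ is primitive. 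Define $\mathbf{F}:=\bigoplus_{i=1}^{2r}L_{i}$ with $\mathbf{h}:=\bigoplus_{i=1}^{2r}h_{i}$. The curvature $F(\mathbf{h})$ is then block diagonal, so $\sqrt{-1}\Lambda_{\omega_{0}}F(\mathbf{h})=0$ follows directly from $\Lambda_{\omega_{0}}\omega_{i}=0$, which proves item (1); the $[\omega_{0}]$-polystability of $\mathbf{F}$ is a consequence of the Kobayashi--Hitchin correspondence applied to the direct sum of slope-zero stable line bundles.

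For item (2), I would endow $\mathrm{U}(\mathbf{F})$ with a complex structure via the Samelson-type construction employed in \cite{GRANTCHAROV, grantcharov2011geometry}: the Chern connection of $(\mathbf{F},\mathbf{h})$ provides an equivariant horizontal distribution, which is combined with a left-invariant complex structure on the compact even-dimensional fiber $\mathrm{U}(2r)$. The HYM property of $\mathbf{F}$ together with $d\omega_{0}=0$ supplies the integrability of the resulting almost complex structure on the total space. The Hermitian metric $\Omega$ is then defined as $\Omega=\pi^{*}\omega_{0}+\Omega_{V}$, where $\Omega_{V}$ is built from the connection form of $\mathbf{h}$ and a bi-invariant $(1,1)$-form on the fiber; this makes $\pi:(\mathrm{U}(\mathbf{F}),\Omega)\to(X,\omega_{0})$ a Hermitian submersion by construction.

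The main technical step is verifying the balanced identity $d\Omega^{n-1}=0$ with $n=\dim_{\mathbbm{C}}\mathrm{U}(\mathbf{F})$. Expanding $\Omega^{n-1}$ via the binomial theorem and using $d\pi^{*}\omega_{0}=0$, only the terms involving $d\Omega_{V}$ can obstruct the vanishing; these terms pair the horizontal lifts of the curvature forms $\omega_{i}$ against powers of $\omega_{0}$ and against the bi-invariant fiber volume. The primitivity conditions $\Lambda_{\omega_{0}}\omega_{i}=0$ precisely kill the mixed horizontal--vertical contributions, while the bi-invariance of the fiber form disposes of the purely vertical ones. I expect this cancellation to be the main obstacle and the heart of the argument, but the computation is substantially lighter than in the $t$-Gauduchon Ricci-flat case, since no second-order differential operator enters and no Aeppli--Chern degree constraint has to be matched, which is precisely what permits arbitrary rank $2r$.
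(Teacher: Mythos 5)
Your construction of ${\bf{F}}$ and your proof of item (1) coincide with the paper's: take ${\bf{F}}$ to be a direct sum of $2r$ line bundles drawn from ${\rm{Pic}}^{0}_{\omega_{0}}(X)\cong H^{2}_{\omega_{0}}(X,\mathbbm{Z})_{\text{prim}}$ (nontrivial because $\varrho(X)>1$), equip it with the diagonal $G$-invariant Hermitian metric, and observe that $\sqrt{-1}\Lambda_{\omega_{0}}(F({\bf{h}}))=0$ holds entry by entry, whence polystability. The paper verifies balancedness via the codifferential identity $\delta\Omega=\pi^{\ast}(\delta\omega_{0})+\sum_{j}\pi^{\ast}\big(\Lambda_{\omega_{0}}(\psi_{j})\big)\Theta_{j}=0$ together with the equivalence between ${\rm{d}}\Omega^{n-1}=0$ and $\delta\Omega=0$, whereas you expand ${\rm{d}}\Omega^{n-1}$ directly and use $\psi_{j}\wedge\omega_{0}^{n-1}=\frac{1}{n}\Lambda_{\omega_{0}}(\psi_{j})\,\omega_{0}^{n}=0$; these are equivalent and both work.

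The one substantive problem is your description of the fiber in item (2). You propose to put a left-invariant (Samelson) complex structure on the full unitary group ${\rm{U}}(2r)$, i.e.\ to work on the full frame bundle. What the paper, and the reference \cite{GRANTCHAROV} you invoke, actually use is the reduction of the frame bundle to the maximal torus $T^{2r}$, which is available precisely because ${\bf{h}}$ is diagonal; the complex structure is $\mathbbm{J}(\Theta_{2j-1})=-\Theta_{2j}$ on the $2r$ abelian connection forms and the metric is $\Omega=\pi^{\ast}\omega_{0}+\tfrac12{\rm{tr}}(\Theta\wedge\mathbbm{J}\Theta)$. Taken literally, your version replaces a $2r$-dimensional abelian fiber by a $4r^{2}$-dimensional non-abelian one: the curvature then contains Lie-bracket terms, the formula for $\delta\Omega$ on torus bundles no longer applies, and your claim that bi-invariance of the fiber form disposes of the purely vertical contributions to ${\rm{d}}\Omega^{n-1}$ is unsubstantiated --- already for $r=1$ the fiber ${\rm{U}}(2)$ with a left-invariant complex structure is a Hopf-type surface carrying no balanced metric whatsoever, so those terms cannot be waved away. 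Once you replace ${\rm{U}}(2r)$ by its torus reduction $T^{2r}$, your argument is the paper's. Two minor points: integrability of $\mathbbm{J}$ requires only that the curvature forms be of type $(1,1)$, not the HYM condition or ${\rm{d}}\omega_{0}=0$; and for the non-K\"ahler conclusion you should insist that at least one of the chosen primitive classes is nonzero, as you do.
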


In order to illustrate the results presented above, by means of basic tools of Lie theory, we provide a detailed description of new examples of Strominger-Bismut Ricci-flat Hermitian metrics, Lichnerowicz Ricci-flat Hermitian metrics, and balanced Hermitian metrics on principal $T^{2}$-bundles over the Fano threefold ${\mathbbm{P}}(T_{{\mathbbm{P}^{2}}})$.

{\bf{Acknowledgments.}}  E. M. Correa is supported by S\~{a}o Paulo Research Foundation FAPESP grant 2022/10429-3.

\section{Ricci form of canonical connections} 

We start by recalling some basic facts related to Hermitian manifolds. Given a compact Hermitian manifold $(M,g,J)$, we have an associated $2$-form $\Omega \in \Omega^{2}(M)$, called fundamental $2$-form, such that
\begin{center}
$\Omega(X,Y) = g(JX,Y),$
\end{center}
$\forall X,Y \in TM$. From this, by considering the Levi-Civita connection $\nabla^{LC}$ associated with $g$, we can use the fundamental $2$-form to define a $1$-parametric family of connections $\nabla^{t} \colon \Gamma(TM) \times \Gamma(TM) \to \Gamma(TM)$, where $t$ is a free parameter, such that
\begin{equation}
g(\nabla^{t}_{X}(Y),Z) = g(\nabla^{LC}_{X}(Y),Z) + \frac{t-1}{4}({\rm{d}}^{c}\Omega)(X,Y,Z) + \frac{t+1}{4}({\rm{d}}^{c}\Omega)(X,JY,JZ), 
\end{equation}
$\forall X,Y,Z \in TM$, the connections $\nabla^{t}$ are called canonical connections of $(M,g,J)$ (a.k.a. $t$-Gauduchon connections), e.g. \cite{gauduchon11hermitian}. In order to describe the Ricci form associated with $\nabla^{t}$, we proceed as follows. Considering $\Omega:=g(J\otimes {\rm{id}})$, we have
\begin{equation}
\label{primitive}
({\rm{d}}\Omega)_{0} := {\rm{d}}\Omega - \frac{1}{n-1}{\rm{L}}(\Lambda({\rm{d}}\Omega)),
\end{equation}
where ${\rm{L}} = \Omega \wedge (-)$ is the Lefschetz operator and $\Lambda$ is its adjoint \cite{Wells}. Since $\Lambda({\rm{d}}\Omega)$ is primitive, it follows that $\Lambda({\rm{L}}(\Lambda({\rm{d}}\Omega))) = (n-1)\Lambda({\rm{d}}\Omega)$, thus $\Lambda(({\rm{d}}\Omega)_{0}) = 0$, i.e., the primitive part of ${\rm{d}}\Omega$ is given by Eq. (\ref{primitive}). In the above setting, we have the following definition.
\begin{definition}
\label{LeeDef}
The Lee $1$-form of a Hermitian manifold $(M,\Omega,J)$ of complex dimension $n$ is defined by
\begin{equation}
\label{Lee}
\theta:= \Lambda({\rm{d}}\Omega).
\end{equation}
\end{definition}
\begin{remark}
The Lee form $\theta$ associated with a Hermitian manifold plays an important role in the study of the classification of Hermitian structures, e.g. \cite{GrayHervella}.
\end{remark}
Now we consider the following result.
\begin{lemma}
 Let $(M,\Omega,J)$ be a Hermitian manifold of complex dimension $n$. Then 
\begin{equation}
\label{Lema1}
{\rm{d}}(\Omega^{n-1}) = \theta \wedge \Omega^{n-1}.
\end{equation}
\end{lemma}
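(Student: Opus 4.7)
The plan is to reduce the identity to a statement about the Lefschetz decomposition of the $3$-form ${\rm{d}}\Omega$. Differentiating $\Omega^{n-1}$ directly gives
\begin{equation*}
{\rm{d}}(\Omega^{n-1}) = (n-1)\, \Omega^{n-2} \wedge {\rm{d}}\Omega,
\end{equation*}
so the claim will follow once one shows $(n-1)\,\Omega^{n-2}\wedge {\rm{d}}\Omega = \theta \wedge \Omega^{n-1}$. Since $\theta$ is a $1$-form and $\Omega^{n-1}$ is of even degree, these two wedge products commute, so orientation/sign issues do not intervene.

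The key input is the primitive decomposition already written down in Eq.~(\ref{primitive}): one has
\begin{equation*}
{\rm{d}}\Omega \;=\; ({\rm{d}}\Omega)_{0} + \frac{1}{n-1}\,{\rm{L}}\bigl(\Lambda({\rm{d}}\Omega)\bigr) \;=\; ({\rm{d}}\Omega)_{0} + \frac{1}{n-1}\,\Omega\wedge\theta,
\end{equation*}
with $({\rm{d}}\Omega)_{0}$ primitive. I would then invoke the standard vanishing theorem for primitive forms on an almost Hermitian manifold of complex dimension $n$: a primitive $k$-form $\alpha$ satisfies ${\rm{L}}^{n-k+1}\alpha = 0$. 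Applied to the primitive $3$-form $({\rm{d}}\Omega)_0$ this yields $\Omega^{n-2}\wedge ({\rm{d}}\Omega)_{0} = 0$.

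Combining these two ingredients gives
\begin{equation*}
(n-1)\,\Omega^{n-2}\wedge {\rm{d}}\Omega \;=\; \Omega^{n-2} \wedge \Omega \wedge \theta \;=\; \Omega^{n-1}\wedge \theta \;=\; \theta \wedge \Omega^{n-1},
\end{equation*}
which is the desired formula. The only step requiring any real work is the vanishing $\Omega^{n-2}\wedge ({\rm{d}}\Omega)_{0}=0$; this is the main obstacle in the sense that it relies on a nontrivial statement of linear Hermitian algebra (the fact that ${\rm{L}}^{n-k+1}$ kills primitive $k$-forms, or equivalently the action of the $\mathfrak{sl}_2$-triple $({\rm{L}},\Lambda,H)$ on the exterior algebra), but once it is invoked the identity is purely algebraic and no analysis is needed.
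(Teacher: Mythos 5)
Your proof is correct and takes essentially the same route as the paper: both reduce the identity to the vanishing $\Omega^{n-2}\wedge({\rm{d}}\Omega)_{0}=0$ of the primitive part of ${\rm{d}}\Omega$ against ${\rm{L}}^{n-2}$. The paper phrases this key input as the equivalence $\Lambda(({\rm{d}}\Omega)_{0})=0 \iff ({\rm{d}}\Omega)_{0}\wedge\Omega^{n-2}=0$ (citing Wells, Corollary 3.13), while you invoke the statement that ${\rm{L}}^{n-k+1}$ annihilates primitive $k$-forms; these are the same piece of $\mathfrak{sl}_2$-Lefschetz linear algebra.
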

\begin{remark}
The above lemma follows directly from the fact that
\begin{equation}
\Lambda(({\rm{d}}\Omega)_{0}) = 0 \iff ({\rm{d}}\Omega)_{0}\wedge \Omega^{n-2} = 0,
\end{equation}
see for instance \cite[Corollary 3.13]{Wells}.
\end{remark}
By considering the Hodge $\ast$-operator defined by $g$ and the associated codifferential $\delta := - \ast {\rm{d}} \ast $, from the decomposition ${\rm{d}}= \partial + \bar{\partial}$, we have $\delta := \partial^{\ast} + \bar{\partial}^{\ast}$, such that 
\begin{equation}
\partial^{\ast} := - \ast \bar{\partial} \ast, \ \ \ \bar{\partial}^{\ast}:= - \ast  \partial \ast.
\end{equation}
In the above context, since $\ast \Omega^{k} = \frac{k!}{(n-k)!}\Omega^{n-k}$, we have from Lemma \ref{Lema1} that
\begin{center}
$\displaystyle{\delta \Omega =  -\frac{1}{(n-1)!}\ast{\rm{d}}(\Omega^{n-1}) = -\frac{1}{(n-1)!}}\ast \big (\theta \wedge \Omega^{n-1}\big) = - \frac{1}{(n-1)!}\ast {\rm{L}}^{n-1}(\theta).$
\end{center}
Considering $J(\theta) = - \theta \circ J $, since $\Lambda(\theta) = 0$, it follows that $\ast {\rm{L}}^{n-1}(\theta) = (n-1)!J(\theta)$, e.g. \cite[Theorem 3.16]{Wells}. Therefore, it follows that  
\begin{equation}
\theta = J(\delta \Omega).
\end{equation}
Denoting by $R(\nabla^{t})$ the curvature tensor of $\nabla^{t}$ and by $\rho(\Omega,t)=\sqrt{-1}{\rm{tr}}(R(\nabla^{t}))$ its associated Ricci form, from \cite[Remark 5]{gauduchon11hermitian} we have that
\begin{equation}
{\rm{tr}}(R(\nabla^{t})) = {\rm{tr}}(R(\nabla^{u})) + \frac{\sqrt{-1}}{2}(t-u){\rm{d}}J(\theta),
\end{equation}
for all $t,u \in \mathbbm{R}$. Hence, we obtain the following
\begin{equation}
\rho(\Omega,t) = \rho(\Omega,u) + \frac{t-u}{2}{\rm{d}} \delta \Omega.
\end{equation}
Considering $\rho_{1}(\Omega,t) = \frac{1}{2}(\rho(\Omega,t) + J(\rho(\Omega,t)))$, i.e., the $(1,1)$ component of $\rho(\Omega,t)$, since 
\begin{enumerate}
\item $\rho(\Omega,1) = \sqrt{-1}{\rm{tr}}(R(\nabla^{Ch})) = \rho_{1}(\Omega,1),$
\item ${\rm{d}} \delta \Omega + J({\rm{d}} \delta \Omega) = 2 ( \partial \partial^{\ast} \Omega + \bar{\partial}\bar{\partial}^{\ast}\Omega),$
\end{enumerate}
we conclude that 
\begin{equation}
\rho_{1}(\Omega,t) =  \rho_{1}(\Omega,1)+ \frac{t-1}{2}\big ( \partial \partial^{\ast} \Omega + \bar{\partial}\bar{\partial}^{\ast}\Omega\big).
\end{equation}
\begin{definition}
Let $(M,g,J)$ be a compact Hermitian manifold and $\nabla^{t}$ the associated canonical connection, for some $t \in \mathbbm{R}$. We say that $(M,g,J)$ is $t$-Gauduchon Ricci-flat if $\rho_{1}(\Omega,t) = 0$.
\end{definition}
In the above setting:
\begin{enumerate}
\item if $\rho_{1}(\Omega,t) = 0$ for $t = 1 \Rightarrow$ $(M,g,J)$ is {\textit{Chern Ricci-flat}},
\item if $\rho_{1}(\Omega,t) = 0$ for $t = 0 \Rightarrow$ $(M,g,J)$ is {\textit{Lichnerowicz Ricci-flat}},
\item if $\rho_{1}(\Omega,t) = 0$ for $t = -1 \Rightarrow$ $(M,g,J)$ is {\textit{Strominger-Bismut Ricci-flat}}.
\end{enumerate}
Given a Hermitian manifold $(M,\Omega,J)$, the condition ${\rm{d}}\Omega^{k} = 0$, for some $1 \leq k \leq n-2$, $n = \dim_{\mathbbm{C}}(M)$, implies that ${\rm{d}}\Omega = 0$. In this setting, we shall also consider the following class of Hermitian metrics.

\begin{definition}
A Hermitian manifold $(M,\Omega,J)$ of complex dimension $n$ is said to be balanced if ${\rm{d}}\Omega^{n-1} = 0$.
\end{definition}
\begin{remark}
Notice that, since $\ast \Omega^{k} = \frac{k!}{(n-k)!}\Omega^{n-k}$, it follows that $(M,\Omega,J)$ is balanced if, and only if, $\Omega$ is co-closed (i.e, $\delta \Omega = 0$).
\end{remark}

\section{Constructions on principal torus bundles}

In this section, we highlight some aspects of gauge theory underlying the ideas introduced in \cite{GRANTCHAROV} and \cite{poddar2018group}. The main purpose is to present some sufficient conditions, in terms of primitive $(1,1)$-classes, to the existence of $t$-Gauduchon Ricci-flat metrics and balanced Hermitian metrics on principal torus bundles over K\"{a}hler-Einstein Fano manifolds. Let $M$ be the total space of a principal $T^{2r}$-bundle over a compact Hermitian manifold $B$ with characteristic classes of $(1,1)$-type. By choosing a connection 
\begin{center}
$\Theta = \begin{pmatrix} \sqrt{-1}\Theta_{1} & \cdots & 0 \\
 \vdots & \ddots & \vdots \\
 0 & \cdots & \sqrt{-1}\Theta_{2r}\end{pmatrix} \in \Omega^{1}(M;{\text{Lie}}(T^{2r})),$
\end{center}
we have ${\rm{d}}\Theta_{j} = \pi^{\ast}{\bf{\psi}}_{j}$, such that $\psi_{j} \in \Omega^{1,1}(B)$, $\forall j = 1,\ldots,2r$. From this, we can construct a complex structure $\mathbbm{J}$ on $M$ by using the horizontal lift of the complex structure defined on the base to $\ker(\Theta)$ (horizontal space), since the vertical space is identified with the tangent space of an even-dimensional torus, we can set
\begin{equation}
\mathbbm{J}(\Theta_{2k-1}) = -\Theta_{2k}, 1 \leq k \leq r.
\end{equation}
Thus, we have a well-defined almost complex structure $\mathbbm{J} \in {\text{End}}(TM)$. It is straightforward to verify that $\mathbbm{J}$ is in fact integrable, see \cite[Lemma 1]{GRANTCHAROV}. 

By considering a Hermitian metric $g_{B}$ on the base manifold $B$, we can use the connection $\Theta$ described above to define a Hermitian metric on $(M,\mathbbm{J})$. In fact, we can set
\begin{equation}
\label{metrictotal}
g_{M} = \pi^{\ast}(g_{B}) + \frac{1}{2}{\rm{tr}} \big ( \Theta \odot \Theta\big ) .
\end{equation}
here we consider $a \odot b = a \otimes b + b \otimes a$, $\forall a,b \in \Omega^{1}(M)$. Since $\mathbbm{J}(\Theta_{2k-1}) = -\Theta_{2k}$, $\forall k = 1,\dots,r$, it follows that the fundamental 2-form $\Omega_{M} = g_{M}(\mathbbm{J} \otimes {\rm{id}})$ is given by
\begin{equation}
\Omega_{M} = \pi^{\ast} (\omega_{B}) + \frac{1}{2} {\rm{tr}} \big ( \Theta \wedge \mathbbm{J} \Theta\big ),
\end{equation}
where $\omega_{B}$ is a the fundamental 2-form of $B$, here we consider $a \wedge b = a \otimes b - b \otimes a$, $\forall a,b \in \Omega^{1}(M)$. From \cite[Lemma 2]{GRANTCHAROV}, we have the following
\begin{equation}
\label{coderivativekahlerform}
\delta \Omega_{M} = \pi^{\ast} (\delta \omega_{B}) + \sum_{j = 1}^{2r} \pi^{\ast} \big (\Lambda_{\omega_{B}}(\psi_{j}) \big ) \Theta_{j},
\end{equation}
where $\Lambda_{\omega_{B}}$ is the dual of the Lefschetz operator ${\rm{L}}_{\omega_{B}} = \omega_{B} \wedge (-)$. Let us suppose from now on that $(B,\omega_{B},J)$ is a compact K\"{a}hler manifold. In this case, since $\delta \omega_{B} = 0$, it follows that 
\begin{equation}
\label{Balancedcondition}
(M,\Omega_{M},\mathbbm{J}) \ {\text{is balanced}} \ \ \iff \ \ \Lambda_{\omega_{B}}(\psi_{j}) = 0, \ \forall j=1,\ldots,2r.
\end{equation}
In particular, the condition $\delta \omega_{B} = 0$ also implies that  
\begin{equation}
\theta = \mathbbm{J}(\delta \Omega_{M}) = \sum_{j = 1}^{r} \Big ( \pi^{\ast} \big (\Lambda_{\omega_{B}}(\psi_{2j}) \big ) \Theta_{2j-1} - \pi^{\ast} \big (\Lambda_{\omega_{B}}(\psi_{2j-1}) \big ) \Theta_{2j}\Big ).
\end{equation}
Further, combining Eq. (\ref{coderivativekahlerform}) with Eq. (\ref{differenceRicci}), we obtain the following description for the Ricci curvature $\rho(\Omega_{M},t)$ associated with the canonical connection $\nabla^{t}$: 
\begin{equation}
\label{tgauduchonricvi}
\rho(\Omega_{M},t) = \pi^{\ast} \Big ( \rho_{B} + \frac{t-1}{2} \sum_{j = 1}^{2r} \Lambda_{\omega_{B}}(\psi_{j})\psi_{j}\Big ),
\end{equation}
where $\rho_{B}$ is the Ricci form of the Chern connection associated with $(B,\omega_{B})$, see for instance \cite[Proposition 5]{GRANTCHAROV}. Therefore, by construction, we have that $\rho(\Omega_{M},t)$ is of $(1,1)$-type, $\forall t \in \mathbbm{R}$, i.e.,
\begin{equation}
\rho(\Omega_{M},t) = \rho_{1}(\Omega_{M},t), \ \ \forall t \in \mathbbm{R}.
\end{equation}
From above, we see that the $t$-Gauduchon Ricci-flat condition on $(M,\Omega_{M},\mathbbm{J})$ is equivalent to the following equation on $(B,\omega_{B},J)$
\begin{equation}
\rho_{B} = - \frac{t-1}{2} \sum_{j = 1}^{2r} \Lambda_{\omega_{B}}(\psi_{j})\psi_{j}.
\end{equation}
Now we observe that, if $\Theta$ is a Yang-Mills connection, namely,
\begin{equation}
{\rm{d}}_{\Theta} \ast \Theta = 0,
\end{equation}
where ${\rm{d}}_{\Theta}$ is the covariant exterior derivative induced by $\Theta$, e.g. \cite{atiyah2005geometry}, \cite{rudolph2017differential}, since $T^{2r}$ is abelian, it follows that 
\begin{equation}
{\rm{d}}_{\Theta} \ast \Theta = 0 \iff {\rm{d}} \ast \psi_{j} = 0, \ \ \forall j = 1,\ldots,2r, \Rightarrow \Lambda_{\omega_{B}}(\psi_{j}) = {\rm{cte}},  \ \ \forall j = 1,\ldots,2r.
\end{equation}
Hence, if we suppose that\footnote{Notice that, since $[\frac{\psi_{1}}{2\pi}] \in H^{2}(B,\mathbbm{Z})$, the condition $\psi_{1} = \omega_{B}$ implies that $B$ is projective \cite{kodaira1954kahler}.} $\psi_{1} = \omega_{B}$, we obtain the following
\begin{equation}
\label{GRF}
\begin{cases} \displaystyle \rho_{B} = \frac{\dim_{\mathbbm{C}}(B)(1-t)}{2} \omega_{B}, \\
\Lambda_{\omega_{B}}(\psi_{j}) = 0, \forall j = 2,\ldots,2r .
\end{cases} \Longrightarrow \rho_{1}(\Omega_{M},t) = 0.
\end{equation}
As we can see, from Eq. (\ref{Balancedcondition}) and Eq. (\ref{GRF}), if we start with a compact K\"{a}hler-Einstein manifold $(M,\omega_{B},J)$ with positive scalar curvature, and with Picard number $\varrho(M) > 1$, we can construct examples of balanced Hermitian metrics and $t$-Gauduchon Ricci-flat metrics (for all $t < 1$) from elements of 
\begin{equation}
H^{1,1}_{\omega_{B}}(M,\mathbbm{Z})_{{\text{prim}}} := \ker \Big (\Lambda_{\omega_{B}} \colon H^{2}(B,\mathbbm{R}) \to \mathbbm{R} \Big ) \cap H^{1,1}(B,\mathbbm{Z}),
\end{equation}
i.e., from primitive integral $(1,1)$-classes. From now on, we will restrict our attention to the class of complex flag varieties. This class of varieties satisfies all the aforementioned requirements and we can take advantage of the underlying framework in Lie theory to construct explicit solutions to the system of PDE's given in Eq. (\ref{GRF}).
\begin{remark}
In the above setting, if the Picard number of $(M,\omega_{B},J)$ is $1$, then $H^{1,1}(B,\mathbbm{Z})$ is generated by $c_{1}(B)$ and $H^{1,1}_{\omega_{B}}(M,\mathbbm{Z})_{{\text{prim}}} = \{0\}$, since 
\begin{center}
$\Lambda_{\omega_{B}}(\rho_{B}) = 2{\rm{scal}}(g) > 0$. 
\end{center}
It is worth to point out that the above construction could not be extended to include K\"{a}hler-Einstein manifolds with negative scalar curvature as base manifolds, see for instance \cite[Page 22]{GRANTCHAROV}.
\end{remark}

\section{Generalities on flag varieties}\label{generalities}
In this section, we review some basic generalities about flag varieties. For more details on the subject presented in this section, we suggest \cite{Akhiezer}, \cite{Flagvarieties}, \cite{HumphreysLAG}, \cite{BorelRemmert}.
\subsection{The Picard group of flag varieties}
\label{subsec3.1}
Let $G^{\mathbbm{C}}$ be a connected, simply connected, and complex Lie group with simple Lie algebra $\mathfrak{g}^{\mathbbm{C}}$. By fixing a Cartan subalgebra $\mathfrak{h}$ and a simple root system $\Delta \subset \mathfrak{h}^{\ast}$, we have a decomposition of $\mathfrak{g}^{\mathbbm{C}}$ given by
\begin{center}
$\mathfrak{g}^{\mathbbm{C}} = \mathfrak{n}^{-} \oplus \mathfrak{h} \oplus \mathfrak{n}^{+}$, 
\end{center}
where $\mathfrak{n}^{-} = \sum_{\alpha \in \Phi^{-}}\mathfrak{g}_{\alpha}$ and $\mathfrak{n}^{+} = \sum_{\alpha \in \Phi^{+}}\mathfrak{g}_{\alpha}$, here we denote by $\Phi = \Phi^{+} \cup \Phi^{-}$ the root system associated with the simple root system $\Delta \subset \mathfrak{h}^{\ast}$. Let us denote by $\kappa$ the Cartan-Killing form of $\mathfrak{g}^{\mathbbm{C}}$. From this, for every  $\alpha \in \Phi^{+}$, we have $h_{\alpha} \in \mathfrak{h}$, such  that $\alpha = \kappa(\cdot,h_{\alpha})$, and we can choose $x_{\alpha} \in \mathfrak{g}_{\alpha}$ and $y_{-\alpha} \in \mathfrak{g}_{-\alpha}$, such that $[x_{\alpha},y_{-\alpha}] = h_{\alpha}$. From these data, we can define a Borel subalgebra\footnote{A maximal solvable subalgebra of $\mathfrak{g}^{\mathbbm{C}}$.} by setting $\mathfrak{b} = \mathfrak{h} \oplus \mathfrak{n}^{+}$. 

\begin{remark}
In the above setting, $\forall \phi \in \mathfrak{h}^{\ast}$, we also denote $\langle \phi, \alpha \rangle = \phi(h_{\alpha})$, $\forall \alpha \in \Phi^{+}$.
\end{remark}

Now we consider the following result (see for instance \cite{Flagvarieties}, \cite{HumphreysLAG}):
\begin{theorem}
Any two Borel subgroups are conjugate.
\end{theorem}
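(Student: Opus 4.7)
The approach I would take is to apply the Borel fixed point theorem, which asserts that a connected solvable algebraic group acting regularly on a non-empty complete variety admits a fixed point. The whole proof reduces to producing, for any two Borels $B_1, B_2$, an element $g \in G^{\mathbbm{C}}$ with $g^{-1} B_1 g \subseteq B_2$, after which maximality closes the argument.

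First I would establish that for any Borel subgroup $B \subset G^{\mathbbm{C}}$, the homogeneous space $G^{\mathbbm{C}}/B$ is a projective (hence complete) algebraic variety. A clean route is to pick a regular dominant weight $\lambda \in \mathfrak{h}^{\ast}$, let $V_{\lambda}$ be the irreducible representation of $G^{\mathbbm{C}}$ of highest weight $\lambda$, and let $v_{\lambda}$ be a highest weight vector. The stabilizer of the line $[v_{\lambda}] \in \mathbbm{P}(V_{\lambda})$ under the projectivized action is precisely $B$, since $\mathfrak{b}$ is the full stabilizer subalgebra of $[v_{\lambda}]$ and $B$ is connected. Hence the orbit map realizes $G^{\mathbbm{C}}/B \cong G^{\mathbbm{C}} \cdot [v_{\lambda}]$ as a closed orbit in $\mathbbm{P}(V_{\lambda})$, which is projective.

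Next, given two Borel subgroups $B_1, B_2$, I would consider the natural left action of $B_1$ on $G^{\mathbbm{C}}/B_2$. Since $B_1$ is connected and solvable and $G^{\mathbbm{C}}/B_2$ is complete, the Borel fixed point theorem produces a coset $gB_2 \in G^{\mathbbm{C}}/B_2$ with $B_1 \cdot gB_2 = gB_2$, i.e.\ $g^{-1} B_1 g \subseteq B_2$. Both $B_1$ and $B_2$ are maximal among connected solvable subgroups, and conjugation by $g$ preserves this property, so $g^{-1} B_1 g$ is itself a Borel; the inclusion $g^{-1} B_1 g \subseteq B_2$ therefore upgrades to an equality, which is the desired conjugacy.

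The main obstacle is the first step, namely projectivity of $G^{\mathbbm{C}}/B$. One must know enough of the representation theory of $\mathfrak{g}^{\mathbbm{C}}$ to guarantee the existence of an irreducible module $V_{\lambda}$ whose highest-weight line has stabilizer exactly $B$ (and not some larger parabolic); equivalently, one invokes a Chevalley/Pl\"ucker-type embedding of $G^{\mathbbm{C}}/B$. The Borel fixed point theorem itself is relatively elementary, following from the Lie--Kolchin theorem by induction on the dimension of the variety, so once completeness is in hand the rest of the argument is short.
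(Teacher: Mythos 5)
The paper offers no proof of this statement at all: it is imported as a classical result with pointers to the references (Brion's lectures and Humphreys' \emph{Linear Algebraic Groups}), so there is no in-text argument to compare yours against. Your proposal is the standard proof from exactly those sources, and its second half is complete and correct as written: once $G^{\mathbbm{C}}/B_2$ is known to be complete, the Borel fixed point theorem applied to the left translation action of $B_1$ produces $g$ with $g^{-1}B_1g \subseteq B_2$, and maximality among connected solvable subgroups upgrades the inclusion to equality. The one step you rightly identify as the crux, completeness of $G^{\mathbbm{C}}/B$, is also where the only real gap sits: knowing that $B$ is the full stabilizer of the highest weight line $[v_{\lambda}]$ for $\lambda$ regular dominant identifies $G^{\mathbbm{C}}/B$ with the orbit $G^{\mathbbm{C}}\cdot[v_{\lambda}]$ only as a quasi-projective variety; it does not by itself show the orbit is closed in $\mathbbm{P}(V(\lambda))$. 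The standard way to finish is to observe that an orbit of minimal dimension is closed (the boundary of an orbit is a union of strictly lower-dimensional orbits), that a closed orbit is complete and therefore contains a $B$-fixed line by the same fixed point theorem, and that the unique $B$-fixed line in an irreducible $\mathbbm{P}(V(\lambda))$ is $[v_{\lambda}]$ itself, so the minimal orbit must be $G^{\mathbbm{C}}\cdot[v_{\lambda}]$; none of this presupposes conjugacy. Alternatively, in the complex-analytic setting of the paper one gets compactness of $G^{\mathbbm{C}}/B$ directly from the Iwasawa decomposition $G^{\mathbbm{C}} = G\cdot B$ with $G$ a compact real form. Your route trades Humphreys' more elementary flag-variety completeness argument (which precedes any representation theory) for the highest-weight machinery, which is a reasonable exchange here since the paper recalls exactly that machinery anyway.
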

From the result above, given a Borel subgroup $B \subset G^{\mathbbm{C}}$, up to conjugation, we can always suppose that $B = \exp(\mathfrak{b})$. In this setting, given a parabolic Lie subgroup\footnote{A Lie subgroup which contains some Borel subgroup.} $P \subset G^{\mathbbm{C}}$, without loss of generality, we can suppose that
\begin{center}
$P  = P_{I}$, \ for some \ $I \subset \Delta$,
\end{center}
where $P_{I} \subset G^{\mathbbm{C}}$ is the parabolic subgroup which integrates the Lie subalgebra 
\begin{center}

$\mathfrak{p}_{I} = \mathfrak{n}^{+} \oplus \mathfrak{h} \oplus \mathfrak{n}(I)^{-}$, \ with \ $\mathfrak{n}(I)^{-} = \displaystyle \sum_{\alpha \in \langle I \rangle^{-}} \mathfrak{g}_{\alpha}$. 

\end{center}
By definition, we have that $P_{I} = N_{G^{\mathbbm{C}}}(\mathfrak{p}_{I})$, where $N_{G^{\mathbbm{C}}}(\mathfrak{p}_{I})$ is the normalizer in  $G^{\mathbbm{C}}$ of $\mathfrak{p}_{I} \subset \mathfrak{g}^{\mathbbm{C}}$, see for instance \cite[\S 3.1]{Akhiezer}. In what follows, it will be useful for us to consider the following basic chain of Lie subgroups

\begin{center}

$T^{\mathbbm{C}} \subset B \subset P \subset G^{\mathbbm{C}}$.

\end{center}
For each element in the aforementioned chain of Lie subgroups we have the following characterization: 

\begin{itemize}

\item $T^{\mathbbm{C}} = \exp(\mathfrak{h})$;  \ \ (complex torus)

\item $B = N^{+}T^{\mathbbm{C}}$, where $N^{+} = \exp(\mathfrak{n}^{+})$; \ \ (Borel subgroup)

\item $P = P_{I} = N_{G^{\mathbbm{C}}}(\mathfrak{p}_{I})$, for some $I \subset \Delta \subset \mathfrak{h}^{\ast}$. \ \ (parabolic subgroup)

\end{itemize}
Now let us recall some basic facts about the representation theory of $\mathfrak{g}^{\mathbbm{C}}$, a detailed exposition on the subject can be found in \cite{Humphreys}. For every $\alpha \in \Phi$, we set 
$$\alpha^{\vee} := \frac{2}{\langle \alpha, \alpha \rangle}\alpha.$$ 
The fundamental weights $\{\varpi_{\alpha} \ | \ \alpha \in \Delta\} \subset \mathfrak{h}^{\ast}$ of $(\mathfrak{g}^{\mathbbm{C}},\mathfrak{h})$ are defined by requiring that $\langle \varpi_{\alpha}, \beta^{\vee} \rangle= \delta_{\alpha \beta}$, $\forall \alpha, \beta \in \Delta$. We denote by 
$$\Lambda^{+} = \bigoplus_{\alpha \in \Delta}\mathbbm{Z}_{\geq 0}\varpi_{\alpha},$$ 
the set of integral dominant weights of $\mathfrak{g}^{\mathbbm{C}}$. Let $V$ be an arbitrary finite dimensional $\mathfrak{g}^{\mathbbm{C}}$-module. By considering its weight space decomposition
\begin{center}
$\displaystyle{V = \bigoplus_{\mu \in \Phi(V)}V_{\mu}},$ \ \ \ \ 
\end{center}
such that $V_{\mu} = \{v \in V \ | \ h \cdot v = \mu(h)v, \ \forall h \in \mathfrak{h}\} \neq \{0\}$, $\forall \mu \in \Phi(V) \subset \mathfrak{h}^{\ast}$, we have the following definition.

\begin{definition}
A highest weight vector (of weight $\lambda$) in a $\mathfrak{g}^{\mathbbm{C}}$-module $V$ is a non-zero vector $v_{\lambda}^{+} \in V_{\lambda}$, such that 
\begin{center}
$x \cdot v_{\lambda}^{+} = 0$, \ \ \ \ \ ($\forall x \in \mathfrak{n}^{+}$).
\end{center}
A weight $\lambda \in \Phi(V)$ associated with a highest weight vector is called highest weight of $V$.
\end{definition}

From above, we consider the following standard results (e.g. \cite{Humphreys}):
\begin{enumerate}

\item[(A)] Every finite dimensional irreducible $\mathfrak{g}^{\mathbbm{C}}$-module $V$ admits a highest weight vector $v_{\lambda}^{+}$. Moreover, $v_{\lambda}^{+}$ is the unique highest weight vector of $V$, up to non-zero scalar multiples.

\item[(B)] Let $V$ and $W$ be finite dimensional irreducible $\mathfrak{g}^{\mathbbm{C}}$-modules with highest weight $\lambda \in \mathfrak{h}^{\ast}$. Then, $V$ and $W$ are isomorphic. We will denote by $V(\lambda)$ a finite dimensional irreducible $\mathfrak{g}^{\mathbbm{C}}$-module with highest weight $\lambda \in \mathfrak{h}^{\ast}$.

\item[(C)] In the above setting, the following hold:
 
\begin{itemize}
\item[(C1)] If $V$ is a finite dimensional irreducible $\mathfrak{g}^{\mathbbm{C}}$-module with highest weight $\lambda \in \mathfrak{h}^{\ast}$, then $\lambda \in \Lambda^{+}$.

\item[(C2)] If $\lambda \in \Lambda^{+}$, then there exists a finite dimensional irreducible $\mathfrak{g}^{\mathbbm{C}}$-module $V$, such that $V = V(\lambda)$. 
\end{itemize}

\end{enumerate}
From item (C), it follows that the map $\lambda \mapsto V(\lambda)$ induces an one-to-one correspondence between $\Lambda^{+}$ and the isomorphism classes of finite dimensional irreducible $\mathfrak{g}^{\mathbbm{C}}$-modules.

\begin{remark} In what follows, it will be useful also to consider the following facts:
\begin{enumerate}
\item[(i)] For all $\lambda \in \Lambda^{+}$, we have $V(\lambda) = \mathfrak{U}(\mathfrak{g}^{\mathbbm{C}}) \cdot v_{\lambda}^{+}$, where $\mathfrak{U}(\mathfrak{g}^{\mathbbm{C}})$ is the universal enveloping algebra of $\mathfrak{g}^{\mathbbm{C}}$;
\item[(ii)] The fundamental representations are defined by $V(\varpi_{\alpha})$, $\alpha \in \Delta$; 
\item[(iii)] For all $\lambda \in \Lambda^{+}$, we have the following equivalence of induced irreducible representations
\begin{center}
$\varrho \colon G^{\mathbbm{C}} \to {\rm{GL}}(V(\lambda))$ \ $\Longleftrightarrow$ \ $\varrho_{\ast} \colon \mathfrak{g}^{\mathbbm{C}} \to \mathfrak{gl}(V(\lambda))$,
\end{center}
such that $\varrho(\exp(x)) = \exp(\varrho_{\ast}x)$, $\forall x \in \mathfrak{g}^{\mathbbm{C}}$, notice that $G^{\mathbbm{C}} = \langle \exp(\mathfrak{g}^{\mathbbm{C}}) \rangle$.
\end{enumerate}
\end{remark}
Given a representation $\varrho \colon G^{\mathbbm{C}} \to {\rm{GL}}(V(\lambda))$, for the sake of simplicity, we shall denote $\varrho(g)v = gv$, for all $g \in G^{\mathbbm{C}}$, and all $v \in V(\lambda)$. Let $G \subset G^{\mathbbm{C}}$ be a compact real form for $G^{\mathbbm{C}}$. Given a complex flag variety $X_{P} = G^{\mathbbm{C}}/P$, regarding $X_{P}$ as a homogeneous $G$-space, that is, $X_{P} = G/G\cap P$, the following theorem allows us to describe all $G$-invariant K\"{a}hler structures on $X_{P}$ through elements of representation theory.
\begin{theorem}[Azad-Biswas, \cite{AZAD}]
\label{AZADBISWAS}
Let $\omega \in \Omega^{1,1}(X_{P})^{G}$ be a closed invariant real $(1,1)$-form, then we have

\begin{center}

$\pi^{\ast}\omega = \sqrt{-1}\partial \overline{\partial}\varphi$,

\end{center}
where $\pi \colon G^{\mathbbm{C}} \to X_{P}$ is the natural projection, and $\varphi \colon G^{\mathbbm{C}} \to \mathbbm{R}$ is given by 
\begin{center}
$\varphi(g) = \displaystyle \sum_{\alpha \in \Delta \backslash I}c_{\alpha}\log \big (||gv_{\varpi_{\alpha}}^{+}|| \big )$, \ \ \ \ $(\forall g \in G^\mathbbm{C})$
\end{center}
with $c_{\alpha} \in \mathbbm{R}$, $\forall \alpha \in \Delta \backslash I$. Conversely, every function $\varphi$ as above defines a closed invariant real $(1,1)$-form $\omega_{\varphi} \in \Omega^{1,1}(X_{P})^{G}$. Moreover, $\omega_{\varphi}$ defines a $G$-invariant K\"{a}hler form on $X_{P}$ if and only if $c_{\alpha} > 0$,  $\forall \alpha \in \Delta \backslash I$.
\end{theorem}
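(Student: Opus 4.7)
The plan is to pull $\omega$ back to $G^{\mathbbm{C}}$, which is Stein as an affine algebraic complex Lie group, and apply the global $\sqrt{-1}\partial\overline{\partial}$-lemma to obtain a smooth real potential $\psi$ with $\pi^{\ast}\omega = \sqrt{-1}\partial\overline{\partial}\psi$. Averaging over the compact real form $G$, I may assume $\psi$ is left $G$-invariant at the cost of modifying by a pluriharmonic function; since $\pi^{\ast}\omega$ is automatically right $P$-invariant, $\psi$ is determined modulo right $P$-invariant pluriharmonic terms, and the problem reduces to exhibiting sufficiently many concrete $G$-invariant potentials on $G^{\mathbbm{C}}$.

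For each $\alpha \in \Delta\setminus I$ I would use that the projective isotropy of the highest weight vector $v^{+}_{\varpi_{\alpha}} \in V(\varpi_{\alpha})$ in $G^{\mathbbm{C}}$ equals exactly $P = P_{I}$ (this is where $\alpha \notin I$ enters), so that $gP \mapsto [gv^{+}_{\varpi_{\alpha}}]$ defines a $G^{\mathbbm{C}}$-equivariant Pl\"{u}cker-type embedding $X_{P} \hookrightarrow \mathbbm{P}(V(\varpi_{\alpha}))$. Pulling back the Fubini-Study potential along this embedding produces
\[
\omega_{\varpi_{\alpha}} \;:=\; \sqrt{-1}\partial\overline{\partial}\log\big(||gv^{+}_{\varpi_{\alpha}}||\big),
\]
a $G$-invariant closed real $(1,1)$-form on $G^{\mathbbm{C}}$ descending to $X_{P}$ and representing a positive multiple of $c_{1}(L_{\varpi_{\alpha}})$ in de Rham cohomology, where $L_{\varpi_{\alpha}} \to X_{P}$ is the $G^{\mathbbm{C}}$-equivariant line bundle attached to $\varpi_{\alpha}$.

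To show these potentials span all $G$-invariant closed real $(1,1)$-forms, I would invoke the classical Borel-Hirzebruch description $\Phic(X_{P}) \cong \bigoplus_{\alpha \in \Delta\setminus I}\mathbbm{Z}\cdot [L_{\varpi_{\alpha}}]$, so that $[\omega] = \sum_{\alpha \in \Delta\setminus I} c_{\alpha}[\omega_{\varpi_{\alpha}}]$ in $H^{1,1}(X_{P},\mathbbm{R})$ for suitable real numbers $c_{\alpha}$. The difference $\omega - \sum c_{\alpha}\omega_{\varpi_{\alpha}}$ is then closed, $G$-invariant, and $d$-exact on the compact Kähler manifold $X_{P}$; by the $\partial\overline{\partial}$-lemma on $X_{P}$ it is $\sqrt{-1}\partial\overline{\partial}u$ for some smooth real $u$, which after averaging over $G$ becomes a $G$-invariant function on the homogeneous space $X_{P}$, hence constant. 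The difference therefore vanishes and the representation formula for $\pi^{\ast}\omega$ follows.

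For the K\"{a}hler criterion I would evaluate the candidate forms at $eP$ using the identification $T^{1,0}_{eP}X_{P} \cong \bigoplus_{\beta \in \Phi^{+}\setminus\langle I\rangle^{+}}\mathfrak{g}_{-\beta}$. A standard highest-weight calculation yields, up to a fixed positive constant,
\[
\omega_{\varpi_{\alpha}}\big|_{eP}\big(y_{-\beta},\overline{y_{-\beta}}\big) \;=\; \langle \varpi_{\alpha},\beta^{\vee}\rangle \cdot ||y_{-\beta}||^{2},
\]
and distinct root spaces are orthogonal with respect to $\omega_{\varphi}$ thanks to $T^{\mathbbm{C}}$-invariance. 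Since every $\beta \in \Phi^{+}\setminus\langle I\rangle^{+}$ involves at least one simple root in $\Delta\setminus I$ with positive coefficient (so $\langle \varpi_{\alpha},\beta^{\vee}\rangle > 0$ for that $\alpha$), positive-definiteness of $\omega_{\varphi}$ at $eP$ is equivalent to $c_{\alpha} > 0$ for every $\alpha \in \Delta\setminus I$, with the necessity confirmed by taking $\beta = \alpha$. By $G$-invariance this propagates to all of $X_{P}$. The main obstacle I anticipate is the equivariant descent step, namely verifying that the right $P$-action on $v^{+}_{\varpi_{\alpha}}$ modifies $\log(||gv^{+}_{\varpi_{\alpha}}||)$ only by a pluriharmonic term, so that $\omega_{\varpi_{\alpha}}$ is genuinely well-defined on $X_{P}$; this is where the character of $P$ on the highest weight line must be controlled.
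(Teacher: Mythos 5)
The paper itself offers no proof of this statement: it is quoted from Azad--Biswas \cite{AZAD} and used as a black box, so there is nothing internal to compare against. Judged on its own, your argument follows the standard route and its structure is sound: build the $G$-invariant potentials $\log||gv^{+}_{\varpi_{\alpha}}||$ (the norm being $G$-invariant makes them left $G$-invariant), show the associated $(1,1)$-forms descend to $X_{P}$, match cohomology classes via ${\rm{Pic}}(X_{P})\otimes\mathbbm{R}\cong H^{2}(X_{P},\mathbbm{R})$, kill the $G$-invariant exact discrepancy by averaging a $\partial\overline{\partial}$-potential over $G$ (an invariant function on a homogeneous space is constant), and read off positivity at ${\rm{o}}=eP$ from $\omega_{\varpi_{\alpha}}|_{{\rm{o}}}(y_{-\beta},\overline{y_{-\beta}})\propto\langle\varpi_{\alpha},\beta^{\vee}\rangle$ together with $T$-invariance forcing distinct root spaces to be orthogonal. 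The opening Stein/global $\partial\overline{\partial}$-lemma step is redundant: once your third paragraph proves $\omega=\sum_{\alpha}c_{\alpha}\omega_{\varpi_{\alpha}}$ as forms on $X_{P}$, the potential on $G^{\mathbbm{C}}$ comes for free.

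One claim is false as stated, though it is repairable and does not sink the argument: the projective isotropy of $v^{+}_{\varpi_{\alpha}}$ is not $P_{I}$ but the larger parabolic $P_{\Delta\setminus\{\alpha\}}$, since the stabilizer of the highest weight line in $V(\varpi_{\alpha})$ contains every root subgroup attached to a simple root $\beta$ with $\langle\varpi_{\alpha},\beta^{\vee}\rangle=0$, i.e.\ every $\beta\neq\alpha$. Hence $gP\mapsto[gv^{+}_{\varpi_{\alpha}}]$ is an embedding only when $\Delta\setminus I=\{\alpha\}$; for $\varrho(X_{P})>1$ it is a fibration of $X_{P}$ onto the orbit $X_{P_{\Delta\setminus\{\alpha\}}}\subset\mathbbm{P}(V(\varpi_{\alpha}))$, and the pulled-back form $\omega_{\varpi_{\alpha}}$ is therefore only semi-positive, never K\"{a}hler by itself. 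What your proof actually needs survives intact: the map factors through $X_{P}$ because $P=P_{I}\subseteq P_{\Delta\setminus\{\alpha\}}$ stabilizes the line $[v^{+}_{\varpi_{\alpha}}]$, so $R_{p}^{\ast}\log||gv^{+}_{\varpi_{\alpha}}||-\log||gv^{+}_{\varpi_{\alpha}}||=\log|\vartheta_{\varpi_{\alpha}}(p)|$ is a constant (hence pluriharmonic) and the class obtained is $c_{1}(\mathscr{O}_{\alpha}(1))$ up to a positive factor. An honest projective embedding would instead require a weight regular with respect to $I$, e.g.\ $\sum_{\alpha\in\Delta\setminus I}\varpi_{\alpha}$. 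Once you replace ``embedding'' by ``equivariant map'', the semi-positivity of each individual $\omega_{\varpi_{\alpha}}$ is precisely why the K\"{a}hler criterion demands $c_{\alpha}>0$ for \emph{every} $\alpha\in\Delta\setminus I$, which your evaluation at $\beta=\alpha$ correctly detects.
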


\begin{remark}
\label{innerproduct}
It is worth pointing out that the norm $|| \cdot ||$ considered in the above theorem is a norm induced from some fixed $G$-invariant inner product $\langle \cdot, \cdot \rangle_{\alpha}$ on $V(\varpi_{\alpha})$, $\forall \alpha \in \Delta \backslash I$. 
\end{remark}

\begin{remark}
An important consequence of Theorem \ref{AZADBISWAS} is that it allows us to describe the local K\"{a}hler potential for any homogeneous K\"{a}hler metric in a quite concrete way, for some examples of explicit computations, we suggest \cite{CorreaGrama}, \cite{Correa}.
\end{remark}

By means of the above theorem we can describe the unique $G$-invariant representative of each integral class in $H^{2}(X_{P},\mathbbm{Z})$. In fact, consider the holomorphic $P$-principal bundle $P \hookrightarrow G^{\mathbbm{C}} \to X_{P}$. By choosing a trivializing open covering $X_{P} = \bigcup_{i \in J}U_{i}$, in terms of $\check{C}$ech cocycles we can write 
\begin{center}
$G^{\mathbbm{C}} = \Big \{(U_{i})_{i \in J}, \psi_{ij} \colon U_{i} \cap U_{j} \to P \Big \}$.
\end{center}
Given $\varpi_{\alpha} \in \Lambda^{+}$, we consider the induced character $\vartheta_{\varpi_{\alpha}} \in {\text{Hom}}(T^{\mathbbm{C}},\mathbbm{C}^{\times})$, such that $({\rm{d}}\vartheta_{\varpi_{\alpha}})_{e} = \varpi_{\alpha}$. Since $P = P_{I}$, we have the decomposition  
\begin{equation}
\label{parabolicdecomposition}
P_{I} = \big[P_{I},P_{I} \big]T(\Delta \backslash I)^{\mathbbm{C}}, \ \  {\text{such that }} \ \ T(\Delta \backslash I)^{\mathbbm{C}} = \exp \Big \{ \displaystyle \sum_{\alpha \in  \Delta \backslash I}a_{\alpha}h_{\alpha} \ \Big | \ a_{\alpha} \in \mathbbm{C} \Big \},
\end{equation}
e.g. \cite[\S 3]{Akhiezer}, so we can consider $\vartheta_{\varpi_{\alpha}} \in {\text{Hom}}(P,\mathbbm{C}^{\times})$ as being the trivial extension of $\vartheta_{\varpi_{\alpha}}|_{T(\Delta \backslash I)^{\mathbbm{C}}}$ to $P$. From the homomorphism $\vartheta_{\varpi_{\alpha}} \colon P \to \mathbbm{C}^{\times}$ one can equip $\mathbbm{C}$ with a structure of $P$-space, such that $pz = \vartheta_{\varpi_{\alpha}}(p)^{-1}z$, $\forall p \in P$, and $\forall z \in \mathbbm{C}$. Denoting by $\mathbbm{C}_{-\varpi_{\alpha}}$ this $P$-space, we can form an associated holomorphic line bundle $\mathscr{O}_{\alpha}(1) = G^{\mathbbm{C}} \times_{P}\mathbbm{C}_{-\varpi_{\alpha}}$, which can be described in terms of $\check{C}$ech cocycles by
\begin{equation}
\label{linecocycle}
\mathscr{O}_{\alpha}(1) = \Big \{(U_{i})_{i \in J},\vartheta_{\varpi_{\alpha}}^{-1} \circ \psi_{i j} \colon U_{i} \cap U_{j} \to \mathbbm{C}^{\times} \Big \},
\end{equation}
that is, $\mathscr{O}_{\alpha}(1) = \{g_{ij}\} \in \check{H}^{1}(X_{P},\mathcal{O}_{X_{P}}^{\ast})$, such that $g_{ij} = \vartheta_{\varpi_{\alpha}}^{-1} \circ \psi_{i j}$, $\forall i,j \in J$. 
\begin{remark}
\label{parabolicdec}
 In particular, if we take $\varpi_{\alpha} \in \Lambda^{+}$, such that $\alpha \in I$, since $\vartheta_{\varpi_{\alpha}}|_{T(\Delta \backslash I)^{\mathbbm{C}}}$ is trivial, it follows that $\mathscr{O}_{\alpha}(1)$ is trivial.
\end{remark}

\begin{remark}
Throughout this paper we shall use the following notation
\begin{equation}
\mathscr{O}_{\alpha}(k) := \mathscr{O}_{\alpha}(1)^{\otimes k},
\end{equation}
for every $k \in \mathbbm{Z}$ and every $\alpha \in \Delta \backslash I$. 
\end{remark}

Given $\mathscr{O}_{\alpha}(1) \in {\text{Pic}}(X_{P})$, such that $\alpha \in \Delta \backslash I$, as described above, if we consider an open covering $X_{P} = \bigcup_{i \in J} U_{i}$ which trivializes both $P \hookrightarrow G^{\mathbbm{C}} \to X_{P}$ and $ \mathscr{O}_{\alpha}(1) \to X_{P}$, by taking a collection of local sections $(s_{i})_{i \in J}$, such that $s_{i} \colon U_{i} \to G^{\mathbbm{C}}$, we can define $q_{i} \colon U_{i} \to \mathbbm{R}^{+}$, such that 
\begin{equation}
\label{functionshermitian}
q_{i} := \frac{1}{||s_{i}v_{\varpi_{\alpha}}^{+}||^{2}},
\end{equation}
for every $i \in J$. Since $s_{j} = s_{i}\psi_{ij}$ on $U_{i} \cap U_{j} \neq \emptyset$, and $pv_{\varpi_{\alpha}}^{+} = \vartheta_{\varpi_{\alpha}}(p)v_{\varpi_{\alpha}}^{+}$, for every $p \in P$, and every $\alpha \in \Delta \backslash I$, the collection of functions $(q_{i})_{i \in J}$ satisfy $q_{j} = |\vartheta_{\varpi_{\alpha}}^{-1} \circ \psi_{ij}|^{2}q_{i}$ on $U_{i} \cap U_{j} \neq \emptyset$. Hence, we obtain a collection of functions $(q_{i})_{i \in J}$ which satisfies the following relation on the overlaps $U_{i} \cap U_{j} \neq \emptyset$
\begin{equation}
\label{collectionofequ}
q_{j} = |g_{ij}|^{2}q_{i},
\end{equation}
such that $g_{ij} = \vartheta_{\varpi_{\alpha}}^{-1} \circ \psi_{i j}$, $\forall i,j \in J$. From this, we can define a Hermitian structure ${\bf{h}}$ on $\mathscr{O}_{\alpha}(1)$ by taking on each trivialization $\tau_{i} \colon U_{i} \times \mathbbm{C} \to \mathscr{O}_{\alpha}(1)$ the metric defined by
\begin{equation}
\label{hermitian}
{\bf{h}}(\tau_{i}(x,v),\tau_{i}(x,w)) = q_{i}(x) v\overline{w},
\end{equation}
for every $(x,v),(x,w) \in U_{i} \times \mathbbm{C}$. The Hermitian metric above induces a Chern connection $\nabla \colon \mathcal{A}^{0}(\mathscr{O}_{\alpha}(1)) \to \mathcal{A}^{1}(\mathscr{O}_{\alpha}(1))$, such that 
\begin{equation}
\nabla \sigma_{i} = \partial (\log q_{i} )\otimes \sigma_{i},
\end{equation}
where $\sigma_{i}(-) = \tau_{i}(-,1), \forall i \in J$. From above, the curvature of $\nabla$ is given by
\begin{equation}
\displaystyle F_{\nabla} \big |_{U_{i}} =   \partial \overline{\partial}\log \Big ( \big | \big | s_{i}v_{\varpi_{\alpha}}^{+}\big | \big |^{2} \Big).
\end{equation}
Therefore, by considering the closed $G$-invariant $(1,1)$-form ${\bf{\Omega}}_{\alpha} \in \Omega^{1,1}(X_{P})^{G}$, which satisfies $\pi^{\ast}{\bf{\Omega}}_{\alpha} = \sqrt{-1}\partial \overline{\partial} \varphi_{\varpi_{\alpha}}$, where $\pi \colon G^{\mathbbm{C}} \to G^{\mathbbm{C}} / P = X_{P}$, and $\varphi_{\varpi_{\alpha}}(g) = \frac{1}{2\pi}\log||gv_{\varpi_{\alpha}}^{+}||^{2}$, $\forall g \in G^{\mathbbm{C}}$, we have 
\begin{equation}
{\bf{\Omega}}_{\alpha} |_{U_{i}} = (\pi \circ s_{i})^{\ast}{\bf{\Omega}}_{\alpha} = \frac{\sqrt{-1}}{2\pi}F_{\nabla} \Big |_{U_{i}},
\end{equation}
i.e., $c_{1}(\mathscr{O}_{\alpha}(1)) = [ {\bf{\Omega}}_{\alpha}]$, $\forall \alpha \in \Delta \backslash I$.

\begin{remark}
Given $I \subset \Delta$, we shall denote $\Phi_{I}^{\pm}:= \Phi^{\pm} \backslash \langle I \rangle^{\pm}$, such that $\langle I \rangle^{\pm} = \langle I \rangle \cap \Phi^{\pm}$.
\end{remark}

\begin{remark}
\label{bigcellcosntruction}
In order to perform some local computations we shall consider the open set $U^{-}(P) \subset X_{P}$ defined by the ``opposite" big cell in $X_{P}$. This open set is a distinguished coordinate neighbourhood $U^{-}(P) \subset X_{P}$ of ${\rm{o}} := eP \in X_{P}$ defined as follows
\begin{equation}
\label{bigcell}
 U^{-}(P) =  B^{-}{\rm{o}} = R_{u}(P_{I})^{-}{\rm{o}} \subset X_{P},  
\end{equation}
 where $B^{-} = \exp(\mathfrak{h} \oplus \mathfrak{n}^{-})$, and
 
 \begin{center}
 
 $R_{u}(P_{I})^{-} = \displaystyle \prod_{\alpha \in \Phi_{I}^{+}}N_{\alpha}^{-}$, \ \ (opposite unipotent radical)
 
 \end{center}
with $N_{\alpha}^{-} = \exp(\mathfrak{g}_{-\alpha})$, $\forall \alpha \in \Phi_{I}^{+}$, e.g. \cite[\S 3]{Lakshmibai2},\cite[\S 3.1]{Akhiezer}. It is worth mentioning that the opposite big cell defines a contractible open dense subset in $X_{P}$, thus the restriction of any vector bundle (principal bundle) over this open set is trivial.
\end{remark}

Consider now the following result.

\begin{lemma}
\label{funddynkinline}
Consider $\mathbbm{P}_{\beta}^{1} = \overline{\exp(\mathfrak{g}_{-\beta}){\rm{o}}} \subset X_{P}$, such that $\beta \in \Phi_{I}^{+}$. Then, 
\begin{equation}
\int_{\mathbbm{P}_{\beta}^{1}} {\bf{\Omega}}_{\alpha} = \langle \varpi_{\alpha}, \beta^{\vee}  \rangle, \ \forall \alpha \in \Delta \backslash I.
\end{equation}
\end{lemma}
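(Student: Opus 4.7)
The plan is to pull the form $\mathbf{\Omega}_\alpha$ back along the explicit affine parametrization of $\mathbbm{P}^1_\beta$ provided by the one-parameter subgroup $\exp(zy_{-\beta})$, and then to reduce the resulting integral on $\mathbbm{C}$ to a Fubini–Study-type computation via the $\mathfrak{sl}_2$-representation theory attached to the root $\beta$.

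First, consider the holomorphic map $f\colon \mathbbm{C}\to X_P$ given by $f(z)=\exp(zy_{-\beta})\cdot{\rm{o}}$. Since $\beta\in\Phi_I^{+}$, the vector $y_{-\beta}\notin\mathfrak{p}_I$, so $f$ is an embedding onto the open dense orbit of $\exp(\mathfrak{g}_{-\beta})$ in $\mathbbm{P}^1_\beta$. Therefore
\[
\int_{\mathbbm{P}^{1}_{\beta}} \mathbf{\Omega}_{\alpha}=\int_{\mathbbm{C}}f^{\ast}\mathbf{\Omega}_{\alpha}.
\]
Composing $f$ with a lift to $G^{\mathbbm{C}}$ and using $\pi^{\ast}\mathbf{\Omega}_{\alpha}=\sqrt{-1}\,\partial\bar{\partial}\varphi_{\varpi_{\alpha}}$ from Theorem \ref{AZADBISWAS}, I obtain
\[
f^{\ast}\mathbf{\Omega}_{\alpha}=\frac{\sqrt{-1}}{2\pi}\,\partial\bar{\partial}\,\log\bigl\lVert\exp(zy_{-\beta})v^{+}_{\varpi_{\alpha}}\bigr\rVert^{2}.
\]

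Next, I restrict the $\mathfrak{g}^{\mathbbm{C}}$-module $V(\varpi_{\alpha})$ to the $\mathfrak{sl}_{2}$-triple associated with $\beta$. The vector $v^{+}_{\varpi_{\alpha}}$ is a highest weight vector of this $\mathfrak{sl}_{2}$ of weight $n:=\langle\varpi_{\alpha},\beta^{\vee}\rangle\in\mathbbm{Z}_{\geq 0}$. Hence $y_{-\beta}^{k}v^{+}_{\varpi_{\alpha}}\neq 0$ for $0\leq k\leq n$ and $y_{-\beta}^{n+1}v^{+}_{\varpi_{\alpha}}=0$, so that
\[
\exp(zy_{-\beta})v^{+}_{\varpi_{\alpha}}=\sum_{k=0}^{n}\frac{z^{k}}{k!}\,y_{-\beta}^{k}v^{+}_{\varpi_{\alpha}}.
\]
Choosing the $G$-invariant Hermitian inner product on $V(\varpi_{\alpha})$ (see Remark \ref{innerproduct}), distinct weight vectors are orthogonal since the maximal torus $T\subset G$ acts unitarily and separates weights; thus
\[
\bigl\lVert\exp(zy_{-\beta})v^{+}_{\varpi_{\alpha}}\bigr\rVert^{2}=\sum_{k=0}^{n}c_{k}\,|z|^{2k}, \qquad c_{k}:=\frac{\lVert y_{-\beta}^{k}v^{+}_{\varpi_{\alpha}}\rVert^{2}}{(k!)^{2}}>0\ (0\leq k\leq n).
\]

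It remains to evaluate $\int_{\mathbbm{C}}\frac{\sqrt{-1}}{2\pi}\partial\bar{\partial}\log\bigl(\sum_{k=0}^{n}c_{k}|z|^{2k}\bigr)=n$. I plan to handle this by writing $\log\bigl(\sum_{k}c_{k}|z|^{2k}\bigr)=n\log|z|^{2}+\log c_{n}+\log\bigl(1+\sum_{k<n}(c_{k}/c_{n})|z|^{2(k-n)}\bigr)$ on a large annulus and applying Stokes' theorem, using that the last summand together with its first derivatives tends to a constant at $\infty$ while the full expression is smooth at $0$; the boundary integrals then contribute $n$ from the leading logarithmic term (the Poincaré–Lelong current of $|z|^{2n}$). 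Equivalently, one can identify the integral with the degree of $\mathscr{O}_{\alpha}(1)|_{\mathbbm{P}^{1}_{\beta}}$, viewed as the pullback of $\mathscr{O}(1)$ under the rational map $[\exp(zy_{-\beta})v^{+}_{\varpi_{\alpha}}]\colon \mathbbm{P}^{1}\to\mathbbm{P}(V(\varpi_{\alpha}))$, whose image is a rational curve of degree $n$. Either route yields $\int_{\mathbbm{P}^{1}_{\beta}}\mathbf{\Omega}_{\alpha}=n=\langle\varpi_{\alpha},\beta^{\vee}\rangle$.

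The main technical point, which I expect to be the only mildly delicate step, is the asymptotic analysis of Step 5 (or equivalently the careful verification that the pulled-back line bundle on $\mathbbm{P}^{1}$ has the correct degree); the representation-theoretic identification of the weight $n=\langle\varpi_{\alpha},\beta^{\vee}\rangle$ and the orthogonality of weight spaces are standard.
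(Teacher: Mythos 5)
Your argument is correct, and it is essentially the computation found in the reference the paper cites for this lemma (the paper itself defers the proof to \cite{correa2023deformed}, whose argument — and that of \cite{AZAD} — is exactly this: parametrize $\mathbbm{P}^1_\beta$ by $\exp(zy_{-\beta})$, use the $\mathfrak{sl}_2$-string through $v^+_{\varpi_\alpha}$ and orthogonality of weight spaces to get $\|\exp(zy_{-\beta})v^+_{\varpi_\alpha}\|^2=\sum_{k=0}^n c_k|z|^{2k}$ with $n=\langle\varpi_\alpha,\beta^\vee\rangle$, and evaluate the resulting Fubini--Study-type integral as the degree $n$). No gaps; the asymptotic/Stokes step you flag is the standard Poincar\'e--Lelong computation and goes through as you describe.
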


A proof for the above result can be found in \cite{correa2023deformed}, see also \cite{FultonWoodward} and \cite{AZAD}. From the above lemma and Theorem \ref{AZADBISWAS}, we obtain the following fundamental result.

\begin{proposition}[\cite{correa2023deformed}]
\label{C8S8.2Sub8.2.3P8.2.6}
Let $X_{P}$ be a complex flag variety associated with some parabolic Lie subgroup $P = P_{I}$. Then, we have
\begin{equation}
\label{picardeq}
{\text{Pic}}(X_{P}) = H^{1,1}(X_{P},\mathbbm{Z}) = H^{2}(X_{P},\mathbbm{Z}) = \displaystyle \bigoplus_{\alpha \in \Delta \backslash I}\mathbbm{Z}[{\bf{\Omega}}_{\alpha} ].
\end{equation}
\end{proposition}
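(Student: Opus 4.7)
The plan is to combine the exponential sheaf sequence, Hodge theory, and the integration formula of Lemma \ref{funddynkinline} with standard Schubert calculus. First I will observe that the Borel--Weil--Bott theorem applied to the trivial character (equivalently, Kodaira--Nakano vanishing on the Fano manifold $X_{P}$) yields $H^{i}(X_{P}, \mathcal{O}_{X_{P}}) = 0$ for every $i \geq 1$. Combined with the exponential sheaf sequence
$$
H^{1}(X_{P}, \mathcal{O}_{X_{P}}) \longrightarrow {\text{Pic}}(X_{P}) \longrightarrow H^{2}(X_{P}, \mathbbm{Z}) \longrightarrow H^{2}(X_{P}, \mathcal{O}_{X_{P}}),
$$
this gives ${\text{Pic}}(X_{P}) \cong H^{2}(X_{P}, \mathbbm{Z})$ via the first Chern class map. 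Moreover, since $h^{0,2}(X_{P}) = 0$, Hodge symmetry reduces the Hodge decomposition to $H^{2}(X_{P}, \mathbbm{C}) = H^{1,1}(X_{P})$, and the Bruhat decomposition of $X_{P}$ into Schubert cells of even real dimension guarantees that $H^{2}(X_{P}, \mathbbm{Z})$ is torsion-free. Identifying $H^{2}(X_{P}, \mathbbm{Z})$ with its image in $H^{2}(X_{P}, \mathbbm{C})$ thus yields $H^{2}(X_{P}, \mathbbm{Z}) = H^{1,1}(X_{P}, \mathbbm{Z})$.

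Next I will exhibit $\{[\mathbf{\Omega}_{\alpha}] : \alpha \in \Delta \setminus I\}$ as a $\mathbbm{Z}$-basis of $H^{2}(X_{P}, \mathbbm{Z})$ by producing the Poincar\'e-dual basis in $H_{2}(X_{P}, \mathbbm{Z})$. In the Bruhat decomposition of $X_{P} = G^{\mathbbm{C}}/P$, the real $2$-cells are indexed by the minimal-length coset representatives of $W/W_{I}$ of length one, which are precisely the simple reflections $s_{\beta}$ with $\beta \in \Delta \setminus I$. Hence the Schubert curves $\mathbbm{P}_{\beta}^{1} = \overline{\exp(\mathfrak{g}_{-\beta}){\rm{o}}}$, $\beta \in \Delta \setminus I$, form a $\mathbbm{Z}$-basis of $H_{2}(X_{P}, \mathbbm{Z})$. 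By Lemma \ref{funddynkinline},
$$
\int_{\mathbbm{P}_{\beta}^{1}} \mathbf{\Omega}_{\alpha} = \langle \varpi_{\alpha}, \beta^{\vee} \rangle = \delta_{\alpha\beta} \quad \text{for all } \alpha, \beta \in \Delta \setminus I,
$$
so the classes $[\mathbf{\Omega}_{\alpha}]$ form the basis dual to $\{[\mathbbm{P}_{\beta}^{1}]\}$ under the natural pairing $H^{2}(X_{P}, \mathbbm{Z}) \times H_{2}(X_{P}, \mathbbm{Z}) \to \mathbbm{Z}$, proving \eqref{picardeq}.

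The main subtlety will be the identification of the real $2$-cells in the partial flag variety $X_{P}$: one must verify that among the Bruhat cells of $X_{P}$, those of real dimension two correspond exactly to the simple reflections $s_{\beta}$ that are minimal-length representatives of their $W_{I}$-cosets, which forces $\beta \in \Delta \setminus I$. This step is standard Schubert calculus for partial flag varieties but constitutes the essential geometric link between the Lie-theoretic data $\Delta \setminus I$ and the topology of $X_{P}$; together with the integration formula of Lemma \ref{funddynkinline}, it converts the dual pairing into the Kronecker delta and thus forces both linear independence and $\mathbbm{Z}$-generation of the $[\mathbf{\Omega}_{\alpha}]$.
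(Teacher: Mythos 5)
Your proof is correct and follows essentially the route the paper indicates (the paper itself defers the details to \cite{correa2023deformed}, deriving the proposition from Lemma \ref{funddynkinline} together with Theorem \ref{AZADBISWAS}): the decisive step in both cases is that Lemma \ref{funddynkinline} exhibits the classes $[\mathbf{\Omega}_{\alpha}]$ as dual, via $\langle \varpi_{\alpha},\beta^{\vee}\rangle = \delta_{\alpha\beta}$, to the Schubert curves $[\mathbbm{P}^{1}_{\beta}]$, $\beta \in \Delta\setminus I$, which freely generate $H_{2}(X_{P},\mathbbm{Z})$ by the Bruhat decomposition. Your substitution of the exponential sequence, Kodaira/Borel--Weil--Bott vanishing, and the even-cell structure (which also gives $H_{1}=0$ and hence $H^{2}(X_{P},\mathbbm{Z})\cong \mathrm{Hom}(H_{2}(X_{P},\mathbbm{Z}),\mathbbm{Z})$, the fact implicitly needed to pass from the dual pairing to a $\mathbbm{Z}$-basis) in place of the invariant-forms argument of Theorem \ref{AZADBISWAS} is standard and sound.
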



In the above setting, we consider the weights of $P = P_{I}$ as being  
\begin{center}
$\displaystyle \Lambda_{P} := \bigoplus_{\alpha \in \Delta \backslash I}\mathbbm{Z}\varpi_{\alpha}$. 
\end{center}
From this, the previous result provides $\Lambda_{P} \cong {\rm{Hom}}(P,\mathbbm{C}^{\times}) \cong {\rm{Pic}}(X_{P})$, such that 
\begin{enumerate}
\item$ \displaystyle \lambda = \sum_{\alpha \in \Delta \backslash I}k_{\alpha}\varpi_{\alpha} \mapsto \prod_{\alpha \in \Delta \backslash I} \vartheta_{\varpi_{\alpha}}^{k_{\alpha}} \mapsto \bigotimes_{\alpha \in \Delta \backslash I} \mathscr{O}_{\alpha}(k_{\alpha})$;
\item $ \displaystyle {\bf{E}} \mapsto \vartheta_{{\bf{E}}}: = \prod_{\alpha \in \Delta \backslash I} \vartheta_{\varpi_{\alpha}}^{\langle c_{1}({\bf{L}}),[\mathbbm{P}^{1}_{\alpha}] \rangle} \mapsto \lambda({\bf{E}}) := \sum_{\alpha \in \Delta \backslash I}\langle c_{1}({\bf{E}}),[\mathbbm{P}^{1}_{\alpha}] \rangle\varpi_{\alpha}$.
\end{enumerate}
Thus, $\forall {\bf{E}} \in {\rm{Pic}}(X_{P})$, we have $\lambda({\bf{E}}) \in \Lambda_{P}$. More generally, $\forall \xi \in H^{1,1}(X_{P},\mathbbm{R})$, we can attach $\lambda (\xi) \in \Lambda_{P}\otimes \mathbbm{R}$, such that
\begin{equation}
\label{weightcohomology}
\lambda(\xi) := \sum_{\alpha \in \Delta \backslash I}\langle \xi,[\mathbbm{P}^{1}_{\alpha}] \rangle\varpi_{\alpha}.
\end{equation}
From above, for every holomorphic vector bundle ${\bf{E}} \to X_{P}$, we define $\lambda({\bf{E}}) \in \Lambda_{P}$, such that 
\begin{equation}
\label{weightholomorphicvec}
\lambda({\bf{E}}) := \sum_{\alpha \in \Delta \backslash I} \langle c_{1}({\bf{E}}),[\mathbbm{P}_{\alpha}^{1}] \rangle \varpi_{\alpha},
\end{equation}
where $c_{1}({\bf{E}}) = c_{1}(\bigwedge^{r}{\bf{E}})$, such that $r = \rank({\bf{E}})$.

\begin{remark}[Harmonic 2-forms on $X_{P}$]Given any $G$-invariant Riemannian metric $g$ on $X_{P}$, let us denote by $\mathscr{H}^{2}(X_{P},g)$ the space of real harmonic 2-forms on $X_{P}$ with respect to $g$, and let us denote by $\mathscr{I}_{G}^{1,1}(X_{P})$ the space of closed invariant real $(1,1)$-forms. Combining the result of Proposition \ref{C8S8.2Sub8.2.3P8.2.6} with \cite[Lemma 3.1]{MR528871}, we obtain 
\begin{equation}
\mathscr{I}_{G}^{1,1}(X_{P}) = \mathscr{H}^{2}(X_{P},g). 
\end{equation}
Therefore, the closed $G$-invariant real $(1,1)$-forms described in Theorem \ref{AZADBISWAS} are harmonic with respect to any $G$-invariant Riemannian metric on $X_{P}$.
\end{remark}

\begin{remark}[K\"{a}hler cone of $X_{P}$]It follows from Eq. (\ref{picardeq}) and Theorem \ref{AZADBISWAS} that the K\"{a}hler cone of a complex flag variety $X_{P}$ is given explicitly by
\begin{equation}
\mathcal{K}(X_{P}) = \displaystyle \bigoplus_{\alpha \in \Delta \backslash I} \mathbbm{R}^{+}[ {\bf{\Omega}}_{\alpha}].
\end{equation}
\end{remark}

\begin{remark}[Cone of curves of $X_{P}$] It is worth observing that the cone of curves ${\rm{NE}}(X_{P})$ of a flag variety $X_{P}$ is generated by the rational curves $[\mathbbm{P}_{\alpha}^{1}] \in \pi_{2}(X_{P})$, $\alpha \in \Delta \backslash I$, see for instance \cite[\S 18.3]{Timashev} and references therein.
\end{remark}

\begin{proposition}
\label{eigenvalueatorigin}
Let $X_{P}$ be a flag variety and let $\omega_{0}$ be a $G$-invariant K\"{a}hler metric on $X_{P}$. Then, for every closed $G$-invariant real $(1,1)$-form $\psi$, the eigenvalues of the endomorphism $\omega_{0}^{-1} \circ \psi$ are given by 
\begin{equation}
\label{eigenvalues}
{\bf{q}}_{\beta}(\omega_{0}^{-1} \circ \psi) = \frac{ \langle \lambda([\psi]), \beta^{\vee} \rangle}{\langle \lambda([\omega_{0}]), \beta^{\vee} \rangle}, \ \ \beta \in \Phi_{I}^{+}.
\end{equation}
\end{proposition}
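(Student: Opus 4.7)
The plan is to reduce the computation to the origin $o = eP \in X_P$, diagonalize both forms via the root-space decomposition of $T_o X_P$, and then identify the diagonal coefficients with the pairings $\langle \lambda([\phi]), \beta^\vee\rangle$ via integration over the rational curves $\mathbbm{P}^1_\beta$. Since $\omega_0$ and $\psi$ are $G$-invariant and $G$ acts transitively on $X_P$, the eigenvalues of the Hermitian endomorphism $\omega_0^{-1}\circ \psi$ are constant on $X_P$, so it suffices to compute them at $o$. The first step is to exploit the canonical identification $T_o^{1,0}X_P \cong \bigoplus_{\beta \in \Phi_I^+}\mathfrak{g}_{-\beta}$, which is the weight decomposition under the adjoint action of the maximal torus $H \subset G$. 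Since the weights $\{-\beta : \beta \in \Phi_I^+\}$ are pairwise distinct, any $\mathrm{Ad}(H)$-invariant Hermitian form on $T_o^{1,0}X_P$ is block-diagonal along this decomposition, with one-dimensional blocks. Fixing a positive Hermitian form $\mathbf{h}_\beta$ on each line $\mathfrak{g}_{-\beta}$, I may write
\begin{equation*}
\omega_0|_o = \sum_{\beta \in \Phi_I^+} a_\beta\,\mathbf{h}_\beta, \qquad \psi|_o = \sum_{\beta \in \Phi_I^+} b_\beta\,\mathbf{h}_\beta,
\end{equation*}
so that the eigenvalue $\mathbf{q}_\beta(\omega_0^{-1}\circ\psi)$ on the line $\mathfrak{g}_{-\beta}$ is the ratio $b_\beta/a_\beta$.

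To match this ratio with the right-hand side of the claim, I invoke the rational curves $\mathbbm{P}^1_\beta = \overline{\exp(\mathfrak{g}_{-\beta})\cdot o}$ from Remark \ref{bigcellcosntruction}. Each $\mathbbm{P}^1_\beta$ is a closed orbit of the $\mathrm{SU}(2)$-subgroup $G_\beta \subset G$ associated with the $\mathfrak{sl}_2$-triple attached to $\beta$, so the pullback of any $G$-invariant closed real $(1,1)$-form $\phi$ to $\mathbbm{P}^1_\beta$ is $G_\beta$-invariant, hence proportional to the Fubini-Study form on $\mathbbm{P}^1_\beta$, with proportionality constant determined solely by the coefficient of $\phi|_o$ on $\mathbf{h}_\beta$. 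It follows that there exists a constant $C_\beta > 0$, depending only on $\beta$ and the chosen $\mathbf{h}_\beta$, such that $\int_{\mathbbm{P}^1_\beta}\phi = C_\beta\, b_\beta$ for every such $\phi$. On the other hand, Proposition \ref{C8S8.2Sub8.2.3P8.2.6} identifies $[\phi] = \sum_{\alpha \in \Delta\setminus I} c_\alpha [{\bf{\Omega}}_\alpha]$ in $H^{1,1}(X_P,\mathbbm{R})$, and Lemma \ref{funddynkinline} applied summand-by-summand then gives
\begin{equation*}
\int_{\mathbbm{P}^1_\beta}\phi = \sum_{\alpha \in \Delta\setminus I} c_\alpha \langle \varpi_\alpha, \beta^\vee\rangle = \langle \lambda([\phi]), \beta^\vee\rangle.
\end{equation*}
Taking the ratio of these two expressions for $\phi = \psi$ and $\phi = \omega_0$ cancels the constant $C_\beta$ and yields the desired formula.

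The main technical point I expect to have to justify carefully is the block-diagonal structure of $G$-invariant Hermitian forms at $o$, which rests on the pairwise distinctness of the weights $\{-\beta : \beta \in \Phi_I^+\}$ together with an application of Schur's lemma to the isotypical decomposition of $T_o^{1,0}X_P$ as an $H$-representation. Once this orthogonality is in hand, the rest is the $G_\beta$-equivariant reduction on each curve $\mathbbm{P}^1_\beta$ coupled with Lemma \ref{funddynkinline}; neither of these relies on positivity, so the argument applies uniformly to $\omega_0$ and $\psi$.
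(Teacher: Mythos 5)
Your argument is correct. Note that the paper does not actually prove Proposition \ref{eigenvalueatorigin}: it defers entirely to \cite{correa2023deformed}, and the machinery the paper sets up (Theorem \ref{AZADBISWAS} and the quasi-potentials $\varphi_{\varpi_{\alpha}}(g)=\tfrac{1}{2\pi}\log\|gv_{\varpi_{\alpha}}^{+}\|^{2}$) points to a proof by direct computation of the invariant representatives ${\bf{\Omega}}_{\alpha}$ at the origin in the big-cell coordinates of Remark \ref{bigcellcosntruction}, reading off the diagonal entries $\langle\varpi_{\alpha},\beta^{\vee}\rangle$ explicitly. Your route is softer and avoids any potential computation: Schur's lemma on the multiplicity-free weight decomposition $T_{{\rm{o}}}^{1,0}X_{P}\cong\bigoplus_{\beta\in\Phi_{I}^{+}}\mathfrak{g}_{-\beta}$ forces every $G$-invariant real $(1,1)$-form to be diagonal at ${\rm{o}}$, and the diagonal coefficient on $\mathfrak{g}_{-\beta}$ is then converted into the topological quantity $\int_{\mathbbm{P}^{1}_{\beta}}\phi=\langle\lambda([\phi]),\beta^{\vee}\rangle$ by $G_{\beta}$-equivariance of the restriction to the Schubert curve combined with Lemma \ref{funddynkinline} and Proposition \ref{C8S8.2Sub8.2.3P8.2.6}. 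The two points you should make explicit when writing this up are exactly the ones you flag, plus one more: (i) the weights $-\beta$, $\beta\in\Phi_{I}^{+}$, are pairwise distinct, so the isotypic pieces are genuinely lines; (ii) the normalizing constant $C_{\beta}$ is strictly positive and, since $\omega_{0}$ is K\"{a}hler, $a_{\beta}>0$ and $\langle\lambda([\omega_{0}]),\beta^{\vee}\rangle>0$, so the ratio $b_{\beta}/a_{\beta}$ is well defined and the cancellation of $C_{\beta}$ is legitimate. As a consistency check, summing your formula over $\beta\in\Phi_{I}^{+}$ recovers Eq. (\ref{contraction}) of Remark \ref{primitivecalc}.
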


A proof for the above result can be found in \cite{correa2023deformed}.

\begin{remark}
\label{primitivecalc}
In the setting of the last proposition, since $n \psi\wedge \omega_{0}^{n-1} = \Lambda_{\omega_{0}}(\psi)\omega_{0}^{n}$, such that $n=\dim_{\mathbbm{C}}(X_{P})$, and $\Lambda_{\omega_{0}}(\psi)={\rm{tr}}(\omega_{0}^{-1} \circ \psi)$, it follows that
\begin{equation}
\label{contraction}
\Lambda_{\omega_{0}}({\bf{\Omega}}_{\alpha})=\sum_{\beta \in \Phi_{I}^{+}} \frac{\langle \varpi_{\alpha}, \beta^{\vee} \rangle}{\langle \lambda([\omega_{0}]), \beta^{\vee}\rangle},
\end{equation}
for every $\alpha \in \Delta \backslash I$. In particular, for every ${\bf{E}} \in {\rm{Pic}}(X_{P})$, we have a Hermitian structure ${\bf{h}}$ on ${\bf{E}}$, such that the curvature $F_{\nabla}$ of the Chern connection $\nabla \myeq {\rm{d}} + \partial \log ({\bf{h}})$, satisfies 
\begin{equation}
\frac{\sqrt{-1}}{2\pi} \Lambda_{\omega_{0}}(F_{\nabla}) = \sum_{\beta \in \Phi_{I}^{+} } \frac{\langle \lambda({\bf{E}}), \beta^{\vee} \rangle}{\langle \lambda([\omega_{0}]), \beta^{\vee}\rangle}.
\end{equation}
From this, we have that $\nabla$ is a Hermitian-Yang-Mills (HYM) connection. Notice that 
\begin{equation}
c_{1}({\bf{E}}) = \sum_{\alpha \in \Delta \backslash I}\langle \lambda({\bf{E}}), \alpha^{\vee} \rangle [{\bf{\Omega}}_{\alpha}],
\end{equation}
for every ${\bf{E}} \in {\rm{Pic}}(X_{P})$, i.e., the curvature of the HYM connection $\nabla$ on ${\bf{E}}$ coincides with the $G$-invariant representative of $c_{1}({\bf{E}})$. 
\end{remark}

\subsection{The first Chern class of flag varieties} In this subsection, we shall review some basic facts related with the Ricci form of $G$-invariant K\"{a}hler metrics on flag varieties. Let $X_{P}$ be a complex flag variety associated with some parabolic Lie subgroup $P = P_{I} \subset G^{\mathbbm{C}}$. By considering the identification $T_{{\rm{o}}}^{1,0}X_{P} \cong \mathfrak{m} \subset \mathfrak{g}^{\mathbbm{C}}$, such that 

\begin{center}
$\mathfrak{m} = \displaystyle \sum_{\alpha \in \Phi_{I}^{-}} \mathfrak{g}_{\alpha}$,
\end{center}
 we can realize $T^{1,0}X_{P}$ as being a holomoprphic vector bundle, associated with the holomorphic principal $P$-bundle $P \hookrightarrow G^{\mathbbm{C}} \to X_{P}$, such that 

\begin{center}

$T^{1,0}X_{P} = \Big \{(U_{i})_{i \in J}, \underline{{\rm{Ad}}}\circ \psi_{i j} \colon U_{i} \cap U_{j} \to {\rm{GL}}(\mathfrak{m}) \Big \}$,

\end{center}
where $\underline{{\rm{Ad}}} \colon P \to {\rm{GL}}(\mathfrak{m})$ is the isotropy representation. From this, we obtain 
\begin{equation}
\label{canonicalbundleflag}
{\bf{K}}_{X_{P}}^{-1} = \det \big(T^{1,0}X_{P} \big) = \Big \{(U_{i})_{i \in J}, \det (\underline{{\rm{Ad}}}\circ \psi_{i j}) \colon U_{i} \cap U_{j} \to \mathbbm{C}^{\times} \Big \}.
\end{equation}
Since $P= [P,P] T(\Delta \backslash I)^{\mathbbm{C}}$, considering $\det \circ \underline{{\rm{Ad}}} \in {\text{Hom}}(T(\Delta \backslash I)^{\mathbbm{C}},\mathbbm{C}^{\times})$, we have 
\begin{equation}
\det \underline{{\rm{Ad}}}(\exp({\bf{t}})) = {\rm{e}}^{{\rm{tr}}({\rm{ad}}({\bf{t}})|_{\mathfrak{m}})} = {\rm{e}}^{- \langle \delta_{P},{\bf{t}}\rangle },
\end{equation}
$\forall {\bf{t}} \in {\rm{Lie}}(T(\Delta \backslash I)^{\mathbbm{C}})$, such that $\delta_{P} = \sum_{\alpha \in \Phi_{I}^{+} } \alpha$. Denoting $\vartheta_{\delta_{P}}^{-1} = \det \circ \underline{{\rm{Ad}}}$, it follows that 
\begin{equation}
\label{charactercanonical}
\vartheta_{\delta_{P}} = \displaystyle \prod_{\alpha \in \Delta \backslash I} \vartheta_{\varpi_{\alpha}}^{\langle \delta_{P},\alpha^{\vee} \rangle} \Longrightarrow {\bf{K}}_{X_{P}}^{-1} = \bigotimes_{\alpha \in \Delta \backslash I}\mathscr{O}_{\alpha}(\ell_{\alpha}),
\end{equation}
such that $\ell_{\alpha} = \langle \delta_{P}, \alpha^{\vee} \rangle, \forall \alpha \in \Delta \backslash I$. Notice that $\lambda({\bf{K}}_{X_{P}}^{-1}) = \delta_{P}$, see Eq. (\ref{weightholomorphicvec}). If we consider the invariant K\"{a}hler metric $\rho_{0} \in \Omega^{1,1}(X_{P})^{G}$ defined by
\begin{equation}
\label{riccinorm}
\rho_{0} = \sum_{\alpha \in \Delta \backslash I}2 \pi \langle \delta_{P}, \alpha^{\vee} \rangle {\bf{\Omega}}_{\alpha},
\end{equation}
it follows that
\begin{equation}
\label{ChernFlag}
c_{1}(X_{P}) = \Big [ \frac{\rho_{0}}{2\pi}\Big].
\end{equation}
By the uniqueness of $G$-invariant representative of $c_{1}(X_{P})$, we have 
\begin{center}
${\rm{Ric}}(\rho_{0}) = \rho_{0}$, 
\end{center}
i.e., $\rho_{0} \in \Omega^{1,1}(X_{P})^{G}$ defines a $G$-ivariant K\"{a}hler-Einstein metric on $X_{P}$ (cf. \cite{MATSUSHIMA}). 

\begin{remark}
\label{Fanoindex}
Observe that, from Proposition \ref{C8S8.2Sub8.2.3P8.2.6} and Eq. (\ref{riccinorm}), we have the Fano index of $X_{P}$ given by the greatest common divisor
\begin{equation}   
{\bf{I}}(X_{P}) = {\text{G.C.D.}} \Big (  \langle \delta_{P}, \alpha^{\vee} \rangle \ \Big | \ \alpha \in \Delta \backslash I \Big ),    
\end{equation}
here we suppose $P = P_{I} \subset G^{\mathbbm{C}}$, for some $I \subset \Delta$. Thus, $I(X_{P})$ can be completely determined by the Cartan matrix of $\mathfrak{g}^{\mathbbm{C}}$.
\end{remark}

\begin{remark}
Given any $G$-invariant K\"{a}hler metric $\omega$ on $X_{P}$, we have ${\rm{Ric}}(\omega) = \rho_{0}$. Thus, it follows that the smooth function $\frac{\det(\omega)}{\det(\rho_{0})}$ is constant. From this, we obtain
\begin{equation}
{\rm{Vol}}(X_{P},\omega) = \frac{1}{n!}\int_{X_{P}}\omega^{n}  =  \frac{\det(\rho_{0}^{-1} \circ \omega)}{n!} \int_{X_{P}}\rho_{0}^{n}.
\end{equation}
Since 
\begin{equation}
\det(\rho_{0}^{-1} \circ \omega) = \frac{1}{(2\pi)^{n}} \prod_{\beta \in \Phi_{I}^{+}}\frac{\langle \lambda([\omega]),\beta^{\vee} \rangle}{\langle \delta_{P},\beta^{\vee} \rangle} \ \ {\text{and}} \ \ \frac{1}{n!}\int_{X_{P}}c_{1}(X_{P})^{n} = \prod_{\beta \in \Phi_{I}^{+}} \frac{\langle \delta_{P},\beta^{\vee} \rangle}{\langle \varrho^{+},\beta^{\vee}\rangle}, 
\end{equation}
we conclude that\footnote{cf. \cite{AZAD}.} 
\begin{equation}
\label{VolKahler}
{\rm{Vol}}(X_{P},\omega) = \prod_{\beta \in \Phi_{I}^{+}} \frac{\langle \lambda([\omega]),\beta^{\vee} \rangle}{\langle \varrho^{+},\beta^{\vee} \rangle},
\end{equation}
where $\varrho^{+} = \frac{1}{2} \sum_{\alpha \in \Phi^{+}}\alpha$. Combining the above formula with the ideas introduced in Remark \ref{primitivecalc} we obtain the following expression for the degree of a holomorphic vector bundle ${\bf{E}} \to X_{P}$ with respect to some $G$-invariant K\"{a}hler metric $\omega$ on $X_{P}$:
\begin{equation}
\label{degreeVB}
\deg_{\omega}({\bf{E}}) = \int_{X_{P}}c_{1}({\bf{E}}) \wedge [\omega]^{n-1} = (n-1)!\Bigg [\sum_{\beta \in \Phi_{I}^{+} } \frac{\langle \lambda({\bf{E}}), \beta^{\vee} \rangle}{\langle \lambda([\omega]), \beta^{\vee}\rangle}\Bigg ] \Bigg [\prod_{\beta \in \Phi_{I}^{+}} \frac{\langle \lambda([\omega]),\beta^{\vee} \rangle}{\langle \varrho^{+},\beta^{\vee} \rangle} \Bigg ],
\end{equation}
such that $\lambda({\bf{E}}) \in \Lambda_{P}$, and $\lambda([\omega]) \in \Lambda_{P} \otimes \mathbbm{R}$. 
\end{remark}

\subsection{Primitive (1,1)-forms on flag varieties}
\label{primitivesplitting}
Let $X_{P}$ be a complex flag variety with Picard number $\varrho(X_{P})>1$. Considering ${\bf{E}} \in {\rm{Pic}}(X_{P})$, fixed some K\"{a}hler class $[\omega_{0}] \in H^{2}(X_{P},\mathbbm{F})$, such that $\mathbbm{F} = \mathbbm{Z}, \mathbbm{Q}$, or $\mathbbm{R}$, we have the $[\omega_{0}]$-degree of ${\bf{E}}$ given by
\begin{equation}
\deg_{\omega_{0}}({\bf{E}}):= \int_{X_{P}}c_{1}({\bf{E}}) \wedge [\omega_{0}]^{n-1}.
\end{equation}
In particular, considering $\mathscr{O}_{\alpha}(1)$, $\forall \alpha \in \Delta \backslash I$, since $c_{1}(\mathscr{O}_{\alpha}(1))$ is an integral class, $\forall \alpha \in \Delta \backslash I$, and $[\omega_{0}] \in H^{2}(X_{P},\mathbbm{F})$, it follows that 
\begin{equation}
\deg_{\omega_{0}}(\mathscr{O}_{\alpha}(1)) = \int_{X_{P}}c_{1}(\mathscr{O}_{\alpha}(1)) \wedge [\omega_{0}] \wedge [\omega_{0}]^{n-2} = \mathcal{Q}_{\omega_{0}}\big (c_{1}(\mathscr{O}_{\alpha}(1)),[\omega_{0}] \big ) \in \mathbbm{F}, 
\end{equation}
$\forall \alpha \in \Delta \backslash I$, where $\mathcal{Q}_{\omega_{0}} \colon H^{2}(X_{P},\mathbbm{F}) \times H^{2}(X_{P},\mathbbm{F}) \to \mathbbm{F}$ is the underlying Hodge-Riemann bilinear form (e.g. \cite{VoisinBook1}, \cite{peters2008mixed}). If we denote by $\psi \in c_{1}({\bf{E}})$ the unique $G$-invariant representative, we have that 
\begin{equation}
\label{degreecontraction}
\deg_{\omega_{0}}({\bf{E}}) = (n-1)!\Lambda_{\omega_{0}}(\psi){\rm{Vol}}(X_{P},\omega_{0}),
\end{equation}
such that ${\rm{Vol}}(X_{P},\omega_{0}) = \frac{1}{n!}\int_{X_{P}}\omega_{0}^{n}$. Thus, it follows that
\begin{equation}
\label{primitivedegree}
\deg_{\omega_{0}}({\bf{E}})= 0 \iff \Lambda_{\omega_{0}}(\psi) = 0,
\end{equation}
i.e., ${\bf{E}} \in {\rm{Pic}}(X_{P})$ has zero $[\omega_{0}]$-degree if, and only if, $c_{1}({\bf{E}})$ is a primitive class (w.r.t. $\omega_{0}$). From this, we consider the primitive submodule 
\begin{equation}
H^{2}_{\omega_{0}}(X_{P},\mathbbm{Z})_{{\text{prim}}} := \ker \Big (\Lambda_{\omega_{0}} \colon H^{2}(X_{P},\mathbbm{R}) \to \mathbbm{R} \Big ) \cap H^{2}(X_{P},\mathbbm{Z}).
\end{equation}
Defining
\begin{equation}
{\rm{Pic}}^{0}_{\omega_{0}}(X_{P}) = \Big \{ {\bf{E}} \in {\rm{Pic}}(X_{P}) \ \Big | \ \deg_{\omega_{0}}({\bf{E}}) = 0 \Big \},
\end{equation}
it follows from Eq. (\ref{primitivedegree}) that ${\rm{Pic}}^{0}_{\omega_{0}}(X_{P}) \cong H^{2}_{\omega_{0}}(X_{P},\mathbbm{Z})_{{\text{prim}}}$.
\begin{remark}[Case $\mathbbm{F} = \mathbbm{Z}$]
\label{integralcase}
In the particular case that $\mathbbm{F} = \mathbbm{Z}$, i.e., $(X_{P}, \omega_{0})$ is a Hodge manifold, we can construct an explicit $\mathbbm{Z}$-basis for $H^{2}_{\omega_{0}}(X_{P},\mathbbm{Z})_{{\text{prim}}}$. In fact, given $[\psi] \in H^{2}(X_{P},\mathbbm{Z})$, we have 
\begin{equation}
\Lambda_{\omega_{0}}([\psi]) = \sum_{\alpha \in \Delta \backslash I}\Lambda_{\omega_{0}}([{\bf{\Omega}}_{\alpha}]) x_{\alpha},
\end{equation}
such that $x_{\alpha} = \big \langle \lambda([\psi]), [\mathbbm{P}^{1}_{\alpha}] \big \rangle$, $\forall \alpha \in \Delta \backslash I$. Since 
\begin{equation}
\label{contractionvolume}
(n-1)!\Lambda_{\omega_{0}}([{\bf{\Omega}}_{\alpha}]){\rm{Vol}}(X_{P},\omega_{0}) = \mathcal{Q}_{\omega_{0}}({\bf{\Omega}}_{\alpha},\omega_{0}) \in \mathbbm{Z}, 
\end{equation}
$\forall \alpha \in \Delta \backslash I$, we have
\begin{equation}
\Lambda_{\omega_{0}}([\psi]) = 0 \iff \sum_{\alpha \in \Delta \backslash I}\frac{\mathcal{Q}_{\omega_{0}}({\bf{\Omega}}_{\alpha},\omega_{0})}{{\bf{\tau}}([\omega_{0}])}x_{\alpha}=0.
\end{equation}
where
\begin{equation}
\tau([\omega_{0}]) := {\rm{G.C.D.}} \Big ( \mathcal{Q}_{\omega_{0}}({\bf{\Omega}}_{\alpha},\omega_{0}) \ \Big | \ \alpha \in \Delta \backslash I \Big).
\end{equation}
Denoting $q_{\alpha}(\omega_{0}):= \frac{\mathcal{Q}_{\omega_{0}}({\bf{\Omega}}_{\alpha},\omega_{0})}{\tau([\omega_{0}])}$, $\forall \alpha \in \Delta \backslash I$, and taking some $\gamma \in \Delta \backslash I$, we obtain a $\mathbbm{Z}$-basis for $H^{2}_{\omega_{0}}(X_{P},\mathbbm{Z})_{{\text{prim}}}$ by setting 
\begin{equation}
\label{basiszeroslope}
{\bf{\xi}}_{\alpha} := -q_{\alpha}(\omega_{0})[{\bf{\Omega}}_{\gamma}] + q_{\gamma}(\omega_{0})[{\bf{\Omega}}_{\alpha}], 
\end{equation}
for all $\forall \alpha \in \Delta \backslash I$, such that $\alpha \neq \gamma$. From this, we have
\begin{equation}
H_{\omega_{0}}^{2}(X_{P},\mathbbm{Z})_{{\text{prim}}} = \bigoplus_{\substack{\alpha \in \Delta \backslash I, \alpha \neq \gamma}}\mathbbm{Z}\xi_{\alpha}.
\end{equation}
By construction, we obtain a $\mathcal{Q}_{\omega_{0}}$-orthogonal decomposition 
\begin{equation}
H^{2}(X_{P},\mathbbm{Z}) = \mathbbm{Z}[\omega_{0}] \oplus H_{\omega_{0}}^{2}(X_{P},\mathbbm{Z})_{{\text{prim}}}.
\end{equation}
From Eq. (\ref{basiszeroslope}), we can describe explicitly a set of generators for the kernel of the homomorphism $\deg_{\omega_{0}}\colon {\rm{Pic}}(X_{P}) \to \mathbbm{Z}$. In fact, considering
\begin{equation}
{\rm{Pic}}^{0}_{\omega_{0}}(X_{P}) = \Big \{ {\bf{E}} \in {\rm{Pic}}(X_{P}) \ \Big | \ \deg_{\omega_{0}}({\bf{E}}) = 0 \Big \},
\end{equation}
since ${\rm{Pic}}^{0}_{\omega_{0}}(X_{P}) \cong H^{2}_{\omega_{0}}(X_{P},\mathbbm{Z})_{{\text{prim}}}$, it follows from Eq. (\ref{basiszeroslope}) that 
\begin{equation} \mathscr{O}_{\gamma}(-q_{\alpha}(\omega_{0})) \otimes \mathscr{O}_{\alpha}(q_{\gamma}(\omega_{0})), \ \ \forall \alpha \in \Delta \backslash I, \alpha \neq \gamma,
\end{equation}
define a set of generators for ${\rm{Pic}}^{0}_{\omega_{0}}(X_{P})$. Moreover, if we consider the finitely generated subgroup ${\rm{H}}_{\omega_{0}}\subset {\text{Hom}}(P,\mathbbm{C}^{\times})$, such that
\begin{equation}
{\rm{H}}_{\omega_{0}} := \Big \langle \chi_{\varpi_{\gamma}}^{-q_{\alpha}(\omega_{0})} \chi_{\varpi_{\alpha}}^{q_{\gamma}(\omega_{0})} \ \Big | \ \alpha \in \Delta \backslash I, \alpha \neq \gamma \Big \rangle,
\end{equation}
recall that ${\text{Hom}}(P,\mathbbm{C}^{\times}) = {\text{Hom}}(T(\Delta \backslash I)^{\mathbbm{C}},\mathbbm{C}^{\times})$, and $({\rm{d}}\chi_{\varpi_{\alpha}})_{e} = \varpi_{\alpha}$, $\forall \alpha \in \Delta$, we obtain the following isomorphisms of abelian groups:
\begin{equation}
{\rm{H}}_{\omega_{0}} \cong {\rm{Pic}}^{0}_{\omega_{0}}(X_{P}) \cong H^{2}_{\omega_{0}}(X_{P},\mathbbm{Z})_{{\text{prim}}}
\end{equation}
The above description will be fundamental for the construction presented in the next section. For the sake of simplicity, fixed some $\gamma \in \Delta \backslash I$, we shall denote
\begin{equation}
\mathscr{O}_{\omega_{0}}(\gamma,\alpha):= \mathscr{O}_{\gamma}(-q_{\alpha}(\omega_{0})) \otimes \mathscr{O}_{\alpha}(q_{\gamma}(\omega_{0})), \ \ \forall \alpha \in \Delta \backslash I, \alpha \neq \gamma,
\end{equation}
thus ${\rm{Pic}}^{0}_{\omega_{0}}(X_{P}) = \big \langle \mathscr{O}_{\omega_{0}}(\gamma,\alpha) \ \big | \ \alpha \in \Delta \backslash I, \alpha \neq \gamma \big \rangle $. 
\end{remark}

\begin{remark}[Case $\mathbbm{F} = \mathbbm{Q}$] In the case that $[\omega_{0}] \in \mathcal{K}(X_{P})$ is a rational K\"{a}hler class, we can take some suitable positive integer $a \in \mathbbm{Z}$, such that $(X_{P},a\omega_{0})$ is a Hodge manifold. Then, applying on $(X_{P},a\omega_{0})$ the construction provided in the previous remark, we obtain a $\mathbbm{Z}$-basis for $H^{2}_{\omega_{0}}(X_{P},\mathbbm{Z})_{{\text{prim}}}$ given by
\begin{equation}
{\bf{\xi}}_{\alpha} := -q_{\alpha}(a\omega_{0})[{\bf{\Omega}}_{\gamma}] + q_{\gamma}(a\omega_{0})[{\bf{\Omega}}_{\alpha}], 
\end{equation}
for all $\forall \alpha \in \Delta \backslash I$, such that $\alpha \neq \gamma$.
\end{remark}

\begin{remark}
\label{relationcontraction}
Notice that, if $\omega_{0} \in \mathcal{K}(X_{P})$ is a K\"{a}hler class, such that $\omega_{0} = \lambda \omega_{1}$, where $\omega_{1}$ is an integral K\"{a}hler class and $\lambda > 0$ ($\lambda \in \mathbbm{R}$), then we have
\begin{equation}
\Lambda_{\omega_{0}}(-) = \frac{1}{\lambda}\Lambda_{\omega_{1}}(-),
\end{equation}
see for instance Eq. (\ref{contraction}). Thus, in this particular case, we have
\begin{equation}
\label{primitiveafterhomo}
H^{2}_{\omega_{0}}(X_{P},\mathbbm{Z})_{{\text{prim}}} = H^{2}_{\omega_{1}}(X_{P},\mathbbm{Z})_{{\text{prim}}} \ \ {\text{and}} \ \ {\rm{Pic}}^{0}_{\omega_{0}}(X_{P}) = {\rm{Pic}}^{0}_{\omega_{1}}(X_{P}),
\end{equation}
the equality of the right-hand side above is a consequence of Eq. (\ref{degreecontraction}).
\end{remark}

\section{Proof of main results}
Now we are in position to prove the main results.

\begin{theorem}
\label{Theorem1}
Let $X$ be a complex flag variety with Picard number $\varrho(X)>1$. Then, for every $k \in \mathbbm{Z}^{\times}$ and every real number $t < 1$, there exists $\lambda(k,t) \in \mathbbm{R}_{>0}$, and a short exact sequence of holomorphic vector bundles 
\begin{center}
\begin{tikzcd} 0 \arrow[r] & \mathscr{O}_{X}(k)  \arrow[r] & {\bf{E}}  \arrow[r]  & {\bf{F}} \arrow[r] & 0 \end{tikzcd}
\end{center}
satisfying the following properties:
\begin{enumerate}
\item[(1)] The holomorphic vector bundle ${\bf{F}} \to X$ admits a Hermitian structure ${\bf{h}}$ solving the Hermitian Yang-Mills equation
\begin{equation}
\sqrt{-1}\Lambda_{\omega_{0}}(F({\bf{h}})) = 0,
\end{equation}
where $[\omega_{0}] = \lambda(k,t)c_{1}(X)$. In particular, ${\bf{F}}$ is $[\omega_{0}]$-polystable;
\item[(2)] The manifold underlying the unitary frame bundle ${\rm{U}}({\bf{E}})$ is a compact non-K\"{a}hler Calabi-Yau manifold which admits a $t$-Gauduchon Ricci-flat Hermitian metric $\Omega$, such that the natural projection map
\begin{equation}
({\rm{U}}({\bf{E}}),\Omega) \to (X,\omega_{0}),
\end{equation}
is a Hermitian submersion;
\item[(3)] If $k = - {\bf{I}}(X)$, where ${\bf{I}}(X)$ is the Fano index of $X$, then the complex manifold underlying $\mathscr{O}_{X}(k)$ admits a complete Calabi-Yau metric.
\end{enumerate}
\end{theorem}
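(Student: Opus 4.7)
Fix $k \in \mathbbm{Z}^{\times}$ and $t<1$, and set
$\lambda(k,t) := \pi\, n\,(1-t)\, k^{2}/\mathbf{I}(X)^{2}$, where $n = \dim_{\mathbbm{C}} X$. Let $\omega_{0}$ denote the unique $G$-invariant K\"{a}hler-Einstein representative of $\lambda(k,t)\, c_{1}(X) \in \mathcal{K}(X)$ (cf.\ Eqs.~(\ref{riccinorm})--(\ref{ChernFlag})). Since $\varrho(X)>1$, Remark~\ref{integralcase} supplies explicit generators $\mathscr{O}_{\omega_{0}}(\gamma,\alpha)$ of the group $\mathrm{Pic}^{0}_{\omega_{0}}(X) \cong H^{2}_{\omega_{0}}(X,\mathbbm{Z})_{\mathrm{prim}}$. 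Fix any integer $r \geq 1$, pick line bundles $\mathbf{L}_{2},\ldots,\mathbf{L}_{2r} \in \mathrm{Pic}^{0}_{\omega_{0}}(X)$, and set $\mathbf{F} := \bigoplus_{j=2}^{2r} \mathbf{L}_{j}$, $\mathbf{E} := \mathscr{O}_{X}(k) \oplus \mathbf{F}$, so that the required short exact sequence is the trivial split one. By Remark~\ref{primitivecalc}, each $\mathbf{L}_{j}$ admits a canonical $G$-invariant Hermitian structure whose Chern curvature $\psi_{j}$ is the $G$-invariant primitive harmonic representative of $c_{1}(\mathbf{L}_{j})$; the direct-sum structure $\mathbf{h}$ on $\mathbf{F}$ then satisfies $\sqrt{-1}\Lambda_{\omega_{0}}F(\mathbf{h})=0$, and $[\omega_{0}]$-polystability is automatic since $\mathbf{F}$ is a direct sum of degree-zero line bundles of equal slope. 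This proves~(1).

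\textbf{Torus bundle and the $t$-Gauduchon equation.} Endow $\mathscr{O}_{X}(k)$ with the $G$-invariant Hermitian structure of Eq.~(\ref{hermitian}). Since $c_{1}(\mathscr{O}_{X}(k)) = (k/\mathbf{I}(X))\, c_{1}(X)$ is a nonzero real multiple of $c_{1}(X)$, the Chern curvature has the form $\psi_{1} = \sigma\,\omega_{0}$ with the unique $\sigma \in \mathbbm{R}^{\times}$ determined by $\sigma\,\lambda(k,t) = 2\pi k/\mathbf{I}(X)$. The diagonal $T^{2r}$-reduction of the unitary frame bundle of the split Hermitian bundle $\mathbf{E}$ is a principal $T^{2r}$-bundle $\mathrm{U}(\mathbf{E}) \to X$ with connection $\Theta$ of curvature $(\psi_{1},\ldots,\psi_{2r})$; Section~3 then equips it with the integrable complex structure $\mathbbm{J}$ and Hermitian metric $\Omega$ of Eq.~(\ref{metrictotal}), so that $(\mathrm{U}(\mathbf{E}),\Omega) \to (X,\omega_{0})$ is a Hermitian submersion. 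Primitivity of $\psi_{j}$ for $j \geq 2$ reduces the sufficient condition~(\ref{GRF}) for $t$-Gauduchon Ricci-flatness to the scalar identity $\rho_{0} = \tfrac{n(1-t)\sigma^{2}}{2}\,\omega_{0}$, equivalently $\sigma^{2}\lambda(k,t) = 4\pi/(n(1-t))$, which is verified by direct substitution of the values of $\lambda(k,t)$ and $\sigma$ fixed above. The total space $\mathrm{U}(\mathbf{E})$ is non-K\"{a}hler because the characteristic classes $[\psi_{j}]$ are nontrivial, while $c_{1}(\mathrm{U}(\mathbf{E})) = 0$ follows from Eq.~(\ref{tgauduchonricvi}) at $t=1$: indeed, $c_{1}(X) \in \mathbbm{R}\cdot [\psi_{1}]$ lies in the Leray kernel of $\pi^{\ast}$. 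This proves~(2).

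\textbf{Calabi ansatz and main obstacle.} For $k = -\mathbf{I}(X)$, Eq.~(\ref{charactercanonical}) identifies $\mathscr{O}_{X}(k)$ with the canonical bundle $\mathbf{K}_{X}$, and the classical Calabi ansatz \cite{calabi1979metriques} produces a complete Ricci-flat K\"{a}hler metric on the total space of $\mathbf{K}_{X}$ over the K\"{a}hler-Einstein Fano base $(X,\omega_{0})$, yielding~(3). The principal technical point is the reduction of the $t$-Gauduchon Ricci-flat PDE on $\mathrm{U}(\mathbf{E})$ to the single scalar identity in $(\lambda(k,t),\sigma)$ above, which rests on (a) the K\"{a}hler-Einstein property of $\omega_{0}$ on $X$, (b) the explicit Lie-theoretic description of primitive $(1,1)$-classes on flag varieties from Section~\ref{primitivesplitting}, and (c) the positivity $1-t > 0$ guaranteeing a strictly positive solution $\lambda(k,t)$; the remaining verifications are direct applications of Section~3 and Remark~\ref{primitivecalc}.
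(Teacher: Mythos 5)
Your proposal follows essentially the same route as the paper's proof: the same split bundle ${\bf{E}}=\mathscr{O}_{X}(k)\oplus{\bf{F}}$ with ${\bf{F}}$ a direct sum of $2r-1$ line bundles from ${\rm{Pic}}^{0}_{\omega_{0}}(X)$, the same reduction of the $t$-Gauduchon Ricci-flat equation to a scalar identity in $\lambda$ via Eq. (\ref{tgauduchonricvi}), and the same use of the Calabi ansatz for item (3); your constant $\lambda(k,t)=\pi n(1-t)k^{2}/{\bf{I}}(X)^{2}$ is the paper's $\lambda(k,t)$ with the factor $2\pi$ coming from $[\rho_{0}]=2\pi c_{1}(X)$ absorbed, which is consistent with the normalization $[\omega_{0}]=\lambda(k,t)c_{1}(X)$ in the statement. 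The only step phrased more loosely than it should be is the non-K\"ahlerness of ${\rm{U}}({\bf{E}})$ --- nontriviality of the classes $[\psi_{j}]$ alone is not the operative reason; what matters is that $[\psi_{1}]$ is a nonzero multiple of the K\"ahler class, so $\pi^{\ast}[\omega_{0}]$ is exact on the total space, which is incompatible with a K\"ahler structure --- but the paper itself delegates this point to references, so your treatment is at the same level of detail.
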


\begin{proof}
Considering $X = X_{P}$, let $[\omega_{0}] \in \mathcal{K}(X_{P})$ be a K\"{a}hler class, such that 
\begin{equation}
\Big [\frac{\omega_{0}}{2\pi} \Big] = \lambda c_{1}(X_{P}),
\end{equation}
for some $\lambda \in \mathbbm{R}_{>0}$. Let us denote by 
\begin{equation}
\mathscr{O}_{X_{P}}(1):= \frac{1}{{\bf{I}}(X_{P})}{\bf{K}}_{X_{P}}^{-1},
\end{equation}
the irreducible root of the anti-canonical bundle of $X_{P}$. For every $r \in \mathbbm{Z}_{>0}$, let 
\begin{equation}
{\bf{F}}_{1}, \dots, {\bf{F}}_{2r-1} \in {\rm{Pic}}_{\omega_{0}}^{0}(X_{P}), 
\end{equation}
be an arbitrary collection of non-trivial line bundles. For every $k \in \mathbbm{Z}$, $k\neq 0$, consider the holomorphic vector bundle ${\bf{E}} \to X_{P}$, defined by
\begin{equation}
{\bf{E}} := \mathscr{O}_{X_{P}}(k) \oplus \underbrace{{\bf{F}}_{1} \oplus \cdots \oplus {\bf{F}}_{2r-1}}_{{\bf{F}}},
\end{equation}
such that $\mathscr{O}_{X_{P}}(k):= \mathscr{O}_{X_{P}}(1)^{\otimes k}$. From this, we have a short exact sequence of holomorphic vector bundles
\begin{center}
\begin{tikzcd} 0 \arrow[r] & \mathscr{O}_{X_{P}}(k)  \arrow[r] & {\bf{E}}  \arrow[r]  & {\bf{F}} \arrow[r] & 0 \end{tikzcd}
\end{center}
By taking the Hermitian structure $\widehat{{\bf{h}}}$ on ${\bf{E}}$ defined by
\begin{equation}
\widehat{{\bf{h}}} := {\bf{h}}_{0} \oplus {\bf{h}}_{1} \oplus \cdots \oplus {\bf{h}}_{2r-1},
\end{equation}
such that ${\bf{h}}_{0}$ is a Hermitian structure on $\mathscr{O}_{X_{P}}(k)$ and ${\bf{h}}_{j}$ is a Hermitian structure on ${\bf{F}}_{j}$, $j = 1,\ldots,2r-1$, consider the principal torus bundle provided by the underlying unitary frame bundle ${\rm{U}}({\bf{E}})$ of ${\bf{E}}$, i.e.,
\begin{equation}
T^{2r} \hookrightarrow {\rm{U}}({\bf{E}}) \to X_{P}.
\end{equation}
In order to prove item (1) and (2), we can choose a Hermitian structure 
\begin{equation}
{\bf{h}}:= {\bf{h}}_{1} \oplus \cdots \oplus {\bf{h}}_{2r-1},
\end{equation}
on ${\bf{F}}$ and take principal connection
\begin{equation}
\Theta = \begin{pmatrix} \sqrt{-1}\Theta_{1} & \cdots & 0 \\
 \vdots & \ddots & \vdots \\
 0 & \cdots & \sqrt{-1}\Theta_{2r}\end{pmatrix} \in \Omega^{1} \big ({\rm{U}}({\bf{E}});{\text{Lie}}(T^{2r}) \big),
\end{equation}
satisfying the following:
\begin{enumerate}
\item ${\rm{d}}\Theta_{j} = \pi^{\ast}(\psi_{j})$, such that $\frac{\psi_{1}}{2\pi} \in c_{1}(\mathscr{O}_{X_{P}}(k)), \frac{\psi_{2}}{2\pi} \in c_{1}({\bf{F}}_{1}), \dots, \frac{\psi_{2r}}{2\pi} \in c_{1}({\bf{F}}_{2r-1})$;
\item $\psi_{1},\ldots,\psi_{2r}$ are $G$-invariant $(1,1)$-forms;
\item $\Lambda_{\omega_{0}}(\psi_{j}) = 0$, $\forall j = 2,\ldots, 2r$.
\item the curvature $F({\bf{h}})$ of the associated Chern connection $\nabla^{{\bf{h}}} \myeq {\rm{d}} + {\bf{h}}^{-1}\partial {\bf{h}}$ satisfies 
\begin{equation}
\sqrt{-1}F({\bf{h}}) = \begin{pmatrix} \psi_{2} & \cdots & 0 \\
 \vdots & \ddots & \vdots \\
 0 & \cdots & \psi_{2r}\end{pmatrix}.
\end{equation}
\end{enumerate}
 By construction, it follows that 
\begin{equation}
\sqrt{-1}\Lambda_{\omega_{0}}(F({\bf{h}})) = 0. 
\end{equation}
Thus, from Kobayashi-Hitchin correspondence \cite{donaldson1985anti,donaldson1987infinite},\cite{uhlenbeck1986existence}, we have that ${\bf{F}}$ is $[\omega_{0}]$-polystable. From above, we can equip ${\rm{U}}({\bf{E}})$ with a Hermitian structure $(\Omega,\mathbbm{J})$, such that 
\begin{enumerate}
\item[(a)] $\mathbbm{J}|_{\ker(\Theta)}$ is given by the lift of the complex structure of $X_{P}$;
\item[(b)] $\mathbbm{J}(\Theta_{2j-1}) = -\Theta_{2j}$, $j = 1,\ldots,r$;
\item[(c)] $\Omega := \pi^{\ast}(\omega_{0}) + \frac{1}{2}{\rm{tr}}\big ( \Theta \wedge \mathbbm{J}\Theta\big )$;
\end{enumerate}
If we consider the induced canonical connection $\nabla^{t}$ on $({\rm{U}}({\bf{E}}), \Omega,\mathbbm{J})$, for some $t \in \mathbbm{R}$, it follows that the associated Ricci form $\rho(\Omega,t)$ satisfies 
\begin{equation}
\rho(\Omega,t) = \pi^{\ast} \Big ( \rho_{\omega_0} + \frac{t-1}{2} \sum_{j = 1}^{2n} \Lambda_{\omega_{0}}(\psi_{j})\psi_{j}\Big ) = \pi^{\ast} \Big ( \rho_{\omega_0} + \frac{t-1}{2}  \Lambda_{\omega_{0}}(\psi_{1})\psi_{1}\Big ). 
\end{equation}
where $\rho_{\omega_0}$ is the Ricci form of $\omega_{0}$, see for instance Eq. (\ref{tgauduchonricvi}). If we chose $\omega_{0}$ as being the $G$-invariant representative of $2\pi \lambda c_{1}(X_{P})$, it follows from Eq. (\ref{ChernFlag}) that $\omega_{0} = \lambda\rho_{0}$ and $\rho_{\omega_0} = \rho_{0}$, thus
\begin{equation}
\rho(\Omega,t) = \pi^{\ast} \Big ( \rho_{0} + \frac{t-1}{2}  \Lambda_{\omega_{0}}(\psi_{1})\psi_{1}\Big ).
\end{equation}
On the other hand, since $\frac{\psi_{1}}{2\pi} \in c_{1}(\mathscr{O}_{X_{P}}(k))$ is $G$-invariant, it follows that 
\begin{equation}
\label{curvaturerelation}
\psi_{1} = \frac{k}{{\bf{I}}(X_{P})}\rho_{0} = \frac{ k}{\lambda {\bf{I}}(X_{P})}\omega_{0},
\end{equation}
thus, we obtain 
\begin{equation}
\Lambda_{\omega_{0}}(\psi_{1}) = \frac{k}{\lambda {\bf{I}}(X_{P})} \Lambda_{\omega_{0}}(\omega_{0}) = \frac{k \dim_{\mathbbm{C}}(X_{P})}{\lambda {\bf{I}}(X_{P})}.
\end{equation}
Hence, we conclude that 
\begin{equation}
\rho(\Omega,t) = 0 \iff \frac{1}{\lambda}\omega_{0} + \frac{t-1}{2} \bigg [ \frac{k^{2}\dim_{\mathbbm{C}}(X_{P})}{\lambda^{2}{\bf{I}}(X_{P})^{2}}\bigg]\omega_{0} = 0 \iff \lambda = \frac{1-t}{2} \bigg [ \frac{k^{2}\dim_{\mathbbm{C}}(X_{P})}{{\bf{I}}(X_{P})^{2}}\bigg].
\end{equation}
Therefore, for every $t < 1$ and $k \in \mathbbm{Z}$, $k\neq 0$, we can define 
\begin{equation}
\lambda(k,t) := \frac{1-t}{2} \bigg [ \frac{k^{2}\dim_{\mathbbm{C}}(X_{P})}{{\bf{I}}(X_{P})^{2}}\bigg],
\end{equation}
Considering the Hermitian metric $\Omega$ on $({\rm{U}}({\bf{E}}),\mathbbm{J})$, such that 
\begin{equation}
\Omega = \lambda(k,t)\pi^{\ast}(\rho_{0}) + \frac{1}{2}\big ( \Theta \wedge \mathbbm{J}\Theta\big ),
\end{equation}
by construction, we have $\rho(\Omega,t) = 0$. Further, from Eq. (\ref{tgauduchonricvi}) and Eq. (\ref{curvaturerelation}), it follows that
\begin{equation}
\rho(\Omega,1) = \pi^{\ast}(\rho_{0}) =  \frac{{\bf{I}}(X_{P})}{k} \pi^{\ast}(\psi_{1}) = \frac{{\bf{I}}(X_{P})}{k}{\rm{d}}\Theta_{1} \Rightarrow c_{1}({\rm{U}}({\bf{E}})) = \bigg [\frac{\rho(\Omega,1)}{2\pi}\bigg ] = 0,
\end{equation}
i.e., ${\rm{U}}({\bf{E}})$ is Calabi-Yau. Since ${\rm{U}}({\bf{E}})$ is a principal bundle with compact fiber over a compact base, we have that ${\rm{U}}({\bf{E}})$ is a compact manifold (e.g. \cite{naber1997topology}). Further, $({\rm{U}}({\bf{E}}),\mathbbm{J})$ cannot carry a K\"{a}hler structure for purely topological reasons (see for instance \cite[\S 1.7]{hofer1993remarks}, \cite{blanchard1952varietes}, \cite{poddar2018group}). From above we conclude the proof of item $(1)$ and item $(2)$. The proof of item $(3)$ follows from the following. If $k = - {\bf{I}}(X_{P})$, then $\mathscr{O}_{X_{P}}(k) = {\bf{K}}_{X_{P}}$. Thus, from the Calabi ansatz (e.g. \cite{calabi1979metriques}, \cite{CorreaGrama,Correa}), it follows that $\mathscr{O}_{X_{P}}(k)$ admits a complete Calabi-Yau metric. 
\end{proof}

\begin{remark}
\label{picardeequiv}
 In the setting of the above theorem, considering $\omega_{0} = \lambda(k,t)\rho_{0}$ and $\vartheta_{0}:=\frac{\rho_{0}}{2\pi}$, it follows from Remark \ref{primitiveafterhomo} that
\begin{equation}
 {\rm{Pic}}^{0}_{\omega_{0}}(X_{P}) = {\rm{Pic}}^{0}_{\vartheta_{0}}(X_{P}).
\end{equation}
Thus, once we know the generators of ${\rm{Pic}}^{0}_{\vartheta_{0}}(X_{P})$, we can describe explicitly the geometric structures provided by the last theorem. Notice that, since $[\vartheta_{0}]$ is an integral K\"{a}hler class, one can obtain the generators of ${\rm{Pic}}^{0}_{\vartheta_{0}}(X_{P})$ through a procedure as in Remark \ref{integralcase}.
\end{remark}

From Theorem \ref{Theorem1}, we have the following corollary.
\begin{corollary}
In the setting of the previous theorem, we have the following:
\begin{enumerate}
\item If $t = -1$, then $({\rm{U}}({\bf{E}}),\Omega)$ is Strominger-Bismut Ricci-flat; 
\item If $t = 0$, then $({\rm{U}}({\bf{E}}),\Omega)$ is Lichnerowicz Ricci-flat.
\end{enumerate}
\end{corollary}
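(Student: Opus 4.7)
The corollary is an immediate specialization of Theorem \ref{Theorem1}, so my plan is short and mostly bookkeeping. Both values $t=-1$ and $t=0$ lie in the open interval $(-\infty,1)$, which is precisely the range for which Theorem \ref{Theorem1} guarantees the existence of a $t$-Gauduchon Ricci-flat Hermitian metric $\Omega$ on ${\rm{U}}({\bf{E}})$.

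The approach is therefore: first invoke Theorem \ref{Theorem1} with the fixed choice $t=-1$ to obtain a Hermitian metric $\Omega$ on ${\rm{U}}({\bf{E}})$ with $\rho_{1}(\Omega,-1)=0$, which by definition means that $({\rm{U}}({\bf{E}}),\Omega)$ is Strominger-Bismut Ricci-flat, since $\nabla^{-1}=\nabla^{SB}$. Second, invoke Theorem \ref{Theorem1} with the choice $t=0$ to obtain a Hermitian metric $\Omega$ on ${\rm{U}}({\bf{E}})$ with $\rho_{1}(\Omega,0)=0$, which by definition means that $({\rm{U}}({\bf{E}}),\Omega)$ is Lichnerowicz Ricci-flat, since $\nabla^{0}=\widehat{\nabla}^{LC}$.

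Concretely, in each case the theorem furnishes the explicit scaling constant
\[
\lambda(k,t) = \frac{1-t}{2}\cdot\frac{k^{2}\dim_{\mathbbm{C}}(X_{P})}{{\bf{I}}(X_{P})^{2}},
\]
so one simply records $\lambda(k,-1)=\tfrac{k^{2}\dim_{\mathbbm{C}}(X_{P})}{{\bf{I}}(X_{P})^{2}}$ for the Strominger-Bismut case and $\lambda(k,0)=\tfrac{k^{2}\dim_{\mathbbm{C}}(X_{P})}{2{\bf{I}}(X_{P})^{2}}$ for the Lichnerowicz case, and observes that the remaining data (the short exact sequence, the HYM structure on ${\bf{F}}$, and the Hermitian submersion ${\rm{U}}({\bf{E}})\to X$) are supplied verbatim by Theorem \ref{Theorem1}.

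There is no substantive obstacle here: the work has already been done in Theorem \ref{Theorem1}, and the only role of the corollary is to translate the $t$-Gauduchon Ricci-flat condition at the distinguished parameters $t=-1$ and $t=0$ into the classical terminology (Strominger-Bismut, resp.\ Lichnerowicz) recalled at the end of Section 2. Thus the proof reduces to a one-line citation of Theorem \ref{Theorem1} together with the definitions of $\nabla^{SB}$ and $\widehat{\nabla}^{LC}$ given in the introduction.
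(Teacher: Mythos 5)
Your proposal is correct and matches the paper exactly: the paper offers no separate argument for this corollary, simply deducing it from Theorem \ref{Theorem1} by specializing to $t=-1$ and $t=0$ (both of which satisfy $t<1$) and invoking the terminology from Section 2 identifying $\nabla^{-1}=\nabla^{SB}$ and $\nabla^{0}=\widehat{\nabla}^{LC}$. Your explicit computation of $\lambda(k,-1)$ and $\lambda(k,0)$ is a harmless addition beyond what the paper records.
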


\begin{theorem}
\label{theorem2proof}
Let $X$ be a complex flag variety with Picard number $\varrho(X)>1$. Then, for every $r \in \mathbbm{Z}_{>0}$ and every $G$-invariant K\"{a}hler metric $\omega_{0}$, there exists a holomorphic vector bundle ${\bf{F}} \to X$, such that $\rank({\bf{F}}) = 2r$, satisfying the following properties:
\begin{enumerate}
\item ${\bf{F}} \to X$ admits a Hermitian structure ${\bf{h}}$ solving the Hermitian Yang-Mills equation
\begin{equation}
\sqrt{-1}\Lambda_{\omega_{0}}(F({\bf{h}})) = 0.
\end{equation}
 In particular, ${\bf{F}}$ is $[\omega_{0}]$-polystable;
\item The manifold underlying the unitary frame bundle ${\rm{U}}({\bf{F}})$ is a compact complex non-K\"{a}hler manifold which admits a balanced Hermitian metric $\Omega$, such that the natural projection map
\begin{equation}
({\rm{U}}({\bf{F}}),\Omega) \to (X,\omega_{0}),
\end{equation}
is a Hermitian submersion.
\end{enumerate}
\end{theorem}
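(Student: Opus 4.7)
The plan is to imitate the construction used in Theorem \ref{Theorem1}, but dropping the ``$t$-Gauduchon Ricci-flat'' requirement on the curvature of the distinguished direction $\psi_{1}$ and instead demanding that \emph{every} curvature form $\psi_{j}$ be $\omega_{0}$-primitive. The hypothesis $\varrho(X)>1$ is exactly what guarantees a non-trivial supply of such primitive classes via $H^{2}_{\omega_{0}}(X_{P},\mathbbm{Z})_{{\text{prim}}}$, cf. Remark \ref{integralcase} and Remark \ref{relationcontraction}.

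First, write $X=X_{P}$ and fix a $G$-invariant K\"ahler form $\omega_{0}$. Since $\varrho(X_{P})>1$, the primitive submodule $H^{2}_{\omega_{0}}(X_{P},\mathbbm{Z})_{{\text{prim}}}$ is non-zero (if $[\omega_{0}]$ is rational, apply Remark \ref{integralcase} after rescaling; in general, pick any non-trivial generator $\mathscr{O}_{\omega_{0}}(\gamma,\alpha)\in{\rm{Pic}}^{0}_{\omega_{0}}(X_{P})$), so we can choose non-trivial line bundles $\mathbf{F}_{1},\ldots,\mathbf{F}_{2r}\in{\rm{Pic}}^{0}_{\omega_{0}}(X_{P})$ (repeating if needed) and set
\[
\mathbf{F}:=\mathbf{F}_{1}\oplus\cdots\oplus\mathbf{F}_{2r}.
\]
Equip each $\mathbf{F}_{j}$ with the homogeneous Hermitian metric $\mathbf{h}_{j}$ whose Chern curvature $\psi_{j}/(2\pi)$ is the $G$-invariant representative of $c_{1}(\mathbf{F}_{j})$, as built in Remark \ref{primitivecalc}. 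By construction each $\psi_{j}$ is a closed $G$-invariant $(1,1)$-form with $\Lambda_{\omega_{0}}(\psi_{j})=0$, so the direct-sum metric $\mathbf{h}=\bigoplus\mathbf{h}_{j}$ has diagonal curvature and solves $\sqrt{-1}\Lambda_{\omega_{0}}(F(\mathbf{h}))=0$. Kobayashi--Hitchin then gives that $\mathbf{F}$ is $[\omega_{0}]$-polystable, proving item (1).

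Next, consider the unitary frame bundle $T^{2r}\hookrightarrow{\rm{U}}(\mathbf{F})\to X_{P}$ with connection
\[
\Theta=\operatorname{diag}(\sqrt{-1}\Theta_{1},\ldots,\sqrt{-1}\Theta_{2r}),\qquad {\rm{d}}\Theta_{j}=\pi^{\ast}\psi_{j},
\]
and build the complex structure $\mathbbm{J}$ and Hermitian form
\[
\Omega=\pi^{\ast}(\omega_{0})+\tfrac{1}{2}\,\mathrm{tr}\bigl(\Theta\wedge\mathbbm{J}\Theta\bigr)
\]
as in Grantcharov's construction recalled in Section 3. Applying the balanced criterion Eq. (\ref{Balancedcondition}), which reads $(\mathrm{U}(\mathbf{F}),\Omega,\mathbbm{J})$ is balanced iff $\Lambda_{\omega_{B}}(\psi_{j})=0$ for all $j$, and using that $\omega_{0}$ is K\"ahler (so $\delta\omega_{0}=0$), the primitivity of every $\psi_{j}$ immediately yields $\mathrm{d}\Omega^{n-1}=0$.

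Finally, compactness of ${\rm{U}}(\mathbf{F})$ is automatic (principal bundle with compact fiber over compact base), and the non-existence of any K\"ahler structure on ${\rm{U}}(\mathbf{F})$ follows by exactly the same topological obstruction as in the proof of Theorem \ref{Theorem1}: the fiber $T^{2r}$ must bound in any K\"ahler ambient because the Chern curvature directions $\Theta_{j}$ are exact pullbacks upstairs (see \cite{blanchard1952varietes}, \cite{poddar2018group}, \cite[\S 1.7]{hofer1993remarks}). The Hermitian submersion property is built into the definition of $\Omega$. The only genuinely non-routine point is guaranteeing the existence of enough non-trivial primitive line bundles when $[\omega_{0}]$ is an arbitrary (possibly irrational) K\"ahler class; this is the step where one really uses $\varrho(X)>1$, invoking Remark \ref{relationcontraction} to reduce to the integral case and Remark \ref{integralcase} to produce explicit generators $\mathscr{O}_{\omega_{0}}(\gamma,\alpha)$ of ${\rm{Pic}}^{0}_{\omega_{0}}(X_{P})$.
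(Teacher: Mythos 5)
Your proposal follows essentially the same route as the paper's own proof: take $\mathbf{F}=\mathbf{F}_{1}\oplus\cdots\oplus\mathbf{F}_{2r}$ with each $\mathbf{F}_{j}\in{\rm{Pic}}^{0}_{\omega_{0}}(X_{P})$ carrying its invariant HYM metric, lift to ${\rm{U}}(\mathbf{F})$ with the standard connection/complex structure, and conclude balancedness from $\Lambda_{\omega_{0}}(\psi_{j})=0$ for all $j$ (the paper computes $\delta\Omega=0$ directly from Eq. (\ref{coderivativekahlerform}), which is the same criterion as Eq. (\ref{Balancedcondition})). Your closing remark correctly isolates the one delicate point — that ${\rm{Pic}}^{0}_{\omega_{0}}(X_{P})$ must contain non-trivial elements, which is clear for rational classes but is simply asserted, not argued, for irrational $[\omega_{0}]$ both in your write-up and in the paper.
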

\begin{proof}
Given $r \in \mathbbm{Z}_{>0}$, and fixed some $G$-invariant K\"{a}hler metric $\omega_{0}$ on $X_{P}$, we define
\begin{equation}
{\bf{F}} := {\bf{F}}_{1} \oplus \cdots \oplus {\bf{F}}_{2r}, 
\end{equation}
such that ${\bf{F}}_{1}, \ldots, {\bf{F}}_{2r} \in {\rm{Pic}}_{\omega_{0}}^{0}(X_{P})$. In order to prove item (1) and (2), we can take a Hermitian structure ${\bf{h}}$ on ${\bf{F}}$ and a principal connection
\begin{equation}
\Theta = \begin{pmatrix} \sqrt{-1}\Theta_{1} & \cdots & 0 \\
 \vdots & \ddots & \vdots \\
 0 & \cdots & \sqrt{-1}\Theta_{2r}\end{pmatrix} \in \Omega^{1} \big({\rm{U}}({\bf{F}});{\text{Lie}}(T^{2r}) \big),
\end{equation}
on ${\rm{U}}({\bf{F}}) = {\rm{U}}({\bf{F}},{\bf{h}})$ satisfying the following:
\begin{enumerate}
\item The curvature $F({\bf{h}})$ of the associated Chern connection $\nabla^{{\bf{h}}} \myeq {\rm{d}} + {\bf{h}}^{-1}\partial {\bf{h}}$ satisfies 
\begin{equation}
\sqrt{-1}F({\bf{h}}) = \begin{pmatrix} \psi_{1} & \cdots & 0 \\
 \vdots & \ddots & \vdots \\
 0 & \cdots & \psi_{2r}\end{pmatrix};
\end{equation}
\item ${\rm{d}}\Theta_{j} = \pi^{\ast}(\psi_{j})$, such that $\frac{\psi_{1}}{2\pi} \in c_{1}({\bf{F}}_{1}), \dots, \frac{\psi_{2r}}{2\pi} \in c_{1}({\bf{F}}_{2r})$;
\item $\psi_{1},\ldots,\psi_{2r}$ are $G$-invariant $(1,1)$-forms;
\item $\Lambda_{\omega_{0}}(\psi_{j}) = 0$, $\forall j = 1,\ldots, 2r$.
\end{enumerate}
By construction, it follows that 
\begin{equation}
\sqrt{-1}\Lambda_{\omega_{0}}(F({\bf{h}})) = 0. 
\end{equation}
In particular, we have that ${\bf{F}}$ is $[\omega_{0}]$-polystable. Further, we can equip ${\rm{U}}({\bf{F}})$ with a Hermitian structure $(\Omega,\mathbbm{J})$, such that 
\begin{enumerate}
\item[(a)] $\mathbbm{J}|_{\ker(\Theta)}$ is given by the lift of the complex structure of $X_{P}$;
\item[(b)] $\mathbbm{J}(\Theta_{2j-1}) = -\Theta_{2j}$, $j = 1,\ldots,r$;
\end{enumerate}
and
\begin{equation}
 \Omega = \pi^{\ast}(\omega_{0}) + \frac{1}{2}{\rm{tr}} \big ( \Theta \wedge \mathbbm{J}\Theta\big ).
\end{equation}
By construction, it follows from Eq. (\ref{coderivativekahlerform}) that
\begin{equation}
\delta \Omega = \pi^{\ast} (\delta \omega_{0}) + \sum_{j = 1}^{2r} \pi^{\ast} \big (\Lambda_{\omega_{0}}(\psi_{j}) \big ) \Theta_{j} = 0.
\end{equation}
Hence, $({\rm{U}}({\bf{F}}), \Omega,\mathbbm{J})$ is a balanced Hermitian manifold. From a similar argument as in the proof of item (2) of the previous theorem, we have that ${\rm{U}}({\bf{F}})$ is compact and cannot carry a K\"{a}hler structure for purely topological reasons.
\end{proof}

\subsection{General construction} In this subsection, we will provide a detailed description of all Hermitian metrics obtained from our main results. In the setting of Theorem \ref{Theorem1}, let $X_{P}$ be a complex flag variety and let us fix the K\"{a}hler metric $\omega_{0} = \lambda(k,t)\rho_{0}$, such that 
\begin{equation} 
\label{einsteinconstant}
\Big [\frac{\rho_{0}}{2 \pi} \Big ] = c_{1}(X_{P}) \ \ {\text{and}}  \ \ \lambda(k,t) = \frac{1-t}{2} \bigg [ \frac{k^{2}\dim_{\mathbbm{C}}(X_{P})}{{\bf{I}}(X_{P})^{2}}\bigg],
\end{equation}
for some $k \in \mathbbm{Z}$, and some real number $t< 1$. From Remark \ref{picardeequiv}, we have 
\begin{equation}
 {\rm{Pic}}^{0}_{\omega_{0}}(X_{P}) = {\rm{Pic}}^{0}_{\vartheta_{0}}(X_{P}),
\end{equation}
where $\vartheta_{0}:=\frac{\rho_{0}}{2\pi}$. Hence, once we describe the Hermitian Yang-Mills instantons for every ${\bf{F}} \in {\rm{Pic}}^{0}_{\vartheta_{0}}(X_{P})$, we are able to describe precisely the Hermitian metrics provided by Theorem \ref{Theorem1}. From Remark \ref{integralcase}, fixed some $\gamma \in \Delta \backslash I$, we have 
\begin{center}
${\rm{Pic}}^{0}_{\vartheta_{0}}(X_{P}) = \big \langle \mathscr{O}_{\vartheta_{0}}(\gamma,\alpha) \ \big | \ \alpha \in \Delta \backslash I, \alpha \neq \gamma \big \rangle,$
\end{center}
where $\mathscr{O}_{\vartheta_{0}}(\gamma,\alpha):= \mathscr{O}_{\gamma}(-q_{\alpha}(\vartheta_{0})) \otimes \mathscr{O}_{\alpha}(q_{\gamma}(\vartheta_{0})), \ \ \forall \alpha \in \Delta \backslash I, \alpha \neq \gamma$, such that 
\begin{equation}
q_{\alpha}(\vartheta_{0}) := \frac{\mathcal{Q}_{\vartheta_{0}}({\bf{\Omega}}_{\alpha},\vartheta_{0})}{\tau([\vartheta_{0}])}, \forall \alpha \in \Delta \backslash I,
\end{equation}
such that 
\begin{equation}
\tau([\vartheta_{0}]) := {\rm{G.C.D.}} \Big ( \mathcal{Q}_{\vartheta_{0}}({\bf{\Omega}}_{\alpha},\vartheta_{0}) \ \bigg | \ \alpha \in \Delta \backslash I \Big ).
\end{equation}
Notice that $q_{\alpha}(\omega_{0})$, $\alpha \in \Delta \backslash I$, might not be an integer number, that is the reason why we replace $\omega_{0}$ by $\vartheta_{0}$. From this, given ${\bf{F}} \in {\rm{Pic}}^{0}_{\vartheta_{0}}(X_{P}) $, we have 
\begin{equation}
{\bf{F}} = \bigotimes_{\alpha \in \Delta \backslash I, \alpha \neq \gamma} \mathscr{O}_{\vartheta_{0}}(\gamma,\alpha)^{\otimes k_{\alpha}},
\end{equation}
such that $k_{\alpha} = \langle \lambda({\bf{F}}), \alpha^{\vee}\rangle \in \mathbbm{Z}$, $\forall \alpha \in \Delta \backslash I, \alpha \neq \gamma$. Let $U \subset X_{P}$ be an open set which trivializes both $P \hookrightarrow G^{\mathbbm{C}} \to X_{P}$ and ${\bf{F}} \to X_{P}$. By taking a local section ${\bf{s}}_{U} \colon U \to G^{\mathbbm{C}}$, and considering fiber coordinates $w$ on ${\bf{F}}|_{U}$, we can define a Hermitian structure ${\bf{h}}_{U}$ on ${\bf{F}}|_{U}$ by setting
\begin{equation}
{\bf{h}}_{U} := \frac{w\overline{w}}{\displaystyle \prod_{\alpha \in \Delta \backslash I, \alpha \neq \gamma} \big | \big | {\bf{s}}_{U}v_{\varpi_{\gamma}}^{+}\big | \big |^{-2k_{\alpha}q_{\alpha}(\vartheta_{0})}  \big | \big | {\bf{s}}_{U}v_{\varpi_{\alpha}}^{+}\big | \big |^{2k_{\alpha}q_{\gamma}(\vartheta_{0})}},
\end{equation}
here we consider on each irreducible $\mathfrak{g}^{\mathbbm{C}}$-module $V(\varpi_{\alpha})$, $\alpha \in \Delta \backslash I$, the norm $||\cdot ||$ induced by some $G$-invariant inner product. By patching together the local Hermitian structures described as above, we obtain a Hermitian structure ${\bf{h}}$ on ${\bf{F}}$, such that ${\bf{h}}|_{U} = {\bf{h}}_{U}$, for every trivializing open set $U \subset X_{P}$. In this case, a straightforward computation shows that the associated Chern connection $\nabla^{{\bf{h}}}$ is locally given by $\nabla^{{\bf{h}}}|_{U} = {\rm{d}} + A_{U}$, such that 
\begin{equation}
A_{U} = \sum_{\alpha \in \Delta \backslash I, \alpha \neq \gamma} \Big ( k_{\alpha}q_{\alpha}(\vartheta_{0}) \partial \log \big ( ||{\bf{s}}_{U}v_{\varpi_{\gamma}}^{+}||^{2}\big ) - k_{\alpha}q_{\gamma}(\vartheta_{0}) \partial \log \big ( ||{\bf{s}}_{U}v_{\varpi_{\alpha}}^{+}||^{2}\big )\Big ).
\end{equation}
From above, we conclude that the curvature $F({\bf{h}})$ of $\nabla^{{\bf{h}}}$ satisfies
\begin{equation}
\frac{\sqrt{-1}}{2\pi}F({\bf{h}}) = \sum_{\alpha \in \Delta \backslash I, \alpha \neq \gamma} k_{\alpha} \Big (q_{\gamma}(\vartheta_{0}){\bf{\Omega}}_{\alpha} - q_{\alpha}(\vartheta_{0}){\bf{\Omega}}_{\gamma}\Big ).
\end{equation}
Since 
\begin{equation}
\Lambda_{\vartheta_{0}} \Big ( q_{\gamma}(\vartheta_{0}){\bf{\Omega}}_{\alpha} - q_{\alpha}(\vartheta_{0}){\bf{\Omega}}_{\gamma}\Big) = \frac{\Big ( q_{\gamma}(\vartheta_{0})\mathcal{Q}_{\vartheta_{0}}({\bf{\Omega}}_{\alpha},\vartheta_{0}) - q_{\alpha}(\vartheta_{0})\mathcal{Q}_{\vartheta_{0}}({\bf{\Omega}}_{\gamma},\vartheta_{0})\Big)}{(n-1)! {\rm{Vol}}(X_{P},\vartheta_{0})} = 0,
\end{equation}
for every $\alpha \in \Delta \backslash I, \alpha \neq \gamma$, see Eq. (\ref{contractionvolume}), it follows from Remark \ref{relationcontraction} that
\begin{equation}
\sqrt{-1}\Lambda_{\omega_{0}}(F({\bf{h}})) = 0.
\end{equation}
Considering now the holomorphic vector bundle ${\bf{E}} \to X_{P}$, defined by
\begin{equation}
{\bf{E}} := \mathscr{O}_{X_{P}}(k) \oplus \underbrace{{\bf{F}}_{1} \oplus \cdots \oplus {\bf{F}}_{2r-1}}_{{\bf{F}}},
\end{equation}
such that $\mathscr{O}_{X_{P}}(k):= \mathscr{O}_{X_{P}}(1)^{\otimes k}$ and ${\bf{F}}_{1}, \dots, {\bf{F}}_{2r-1} \in {\rm{Pic}}_{\omega_{0}}^{0}(X_{P})$, we can construct a principal connection $\Theta \in \Omega^{1} ({\rm{U}}({\bf{E}});{\text{Lie}}(T^{2r}))$ satisfying the properties required in the proof of Theorem \ref{Theorem1} in the following way. By taking an open set $U \subset X_{P}$ which trivializes both $P \hookrightarrow G^{\mathbbm{C}} \to X_{P}$ and $ T^{2r} \hookrightarrow {\rm{U}}({\bf{E}}) \to X_{P}$, we define
\begin{equation}
\Theta_{U} := \pi^{\ast} \mathcal{A}_{U} + a_{U}^{-1}{\rm{d}}a_{U},
\end{equation}
where
\begin{equation}
\mathcal{A}_{U} = \begin{pmatrix} \mathcal{A}_{1} & \cdots & 0 \\
 \vdots & \ddots & \vdots \\
 0 & \cdots & \mathcal{A}_{2r}\end{pmatrix} \ \ \ {\text{and}} \ \ \ a_{U} = \begin{pmatrix} a_{1} & \cdots & 0 \\
 \vdots & \ddots & \vdots \\
 0 & \cdots & a_{2r}\end{pmatrix}
\end{equation}
such that 
\begin{itemize}
\item $a_{1},\ldots,a_{2r}$ are fiber coordinates,
\item $\displaystyle \mathcal{A}_{1} = \sum_{\alpha \in \Delta \backslash I}\frac{k \langle \delta_{P},\alpha^{\vee} \rangle}{I(X_{P})}  \frac{1}{2} {\rm{d}}^{c}\log \big ( ||{\bf{s}}_{U}v_{\varpi_{\alpha}}^{+}||^{2}\big )$,
\item $\displaystyle \mathcal{A}_{j+1} = \sum_{\alpha \in \Delta \backslash I, \alpha \neq \gamma}  \langle \lambda({\bf{F}}_{j}),\alpha^{\vee} \rangle \frac{1}{2} {\rm{d}}^{c}\log \Bigg ( \displaystyle \frac{||{\bf{s}}_{U}v_{\varpi_{\alpha}}^{+}||^{2q_{\gamma}(\vartheta_{0})}}{||{\bf{s}}_{U}v_{\varpi_{\gamma}}^{+}||^{2q_{\alpha}(\vartheta_{0})}}\Bigg)$, $j=1,\ldots,2r-1$,
\end{itemize}
where ${\bf{s}}_{U} \colon U \to G^{\mathbbm{C}}$ is some local section, and ${\rm{d}}^{c} = \sqrt{-1}(\overline{\partial} - \partial)$ . If $V \subset X_{P}$ is another trivializing open set as above, such that $U \cap V \neq \emptyset$, considering the associated local data $\mathcal{A}_{V} = {\text{diag}} \{ \mathcal{A}'_{1},\ldots,\mathcal{A}'_{2r} \}$ and $a_{V} = {\text{diag}} \{a'_{1},\ldots,a'_{2r}\}$, we have the following relations on the overlap $U \cap V$:
\begin{enumerate}
\item $\pi^{\ast}\mathcal{A}_{U} = \pi^{\ast}\mathcal{A}_{V} - t_{UV}^{-1}{\rm{d}}t_{UV}$, such that $t_{UV} =  {\text{diag}} \{ t_{UV}^{(1)},\ldots, t_{UV}^{(2r)}\}$;
\item $a_{U} = a_{V}t_{UV}$, thus $a_{U}^{-1}{\rm{d}}a_{U} = a_{U}^{-1}{\rm{d}}a_{U} + t_{UV}^{-1}{\rm{d}}t_{UV}$.
\end{enumerate}
It is worth pointing out that 
\begin{center}
$\displaystyle t_{UV}^{(j)}  = \frac{g_{UV}^{(j)}}{|g_{UV}^{(j)}|}$, 
\end{center}
such that $g_{UV}^{(1)}$ is a transition function of $\mathscr{O}_{X_{P}}(k)$, and $g_{UV}^{(j)}$ is a transition function of ${\bf{F}}_{j-1}$, for $j = 2,\ldots,2r$. By taking a suitable open cover $\mathscr{U}$ of $X_{P}$, we obtain a system of (local) gauge fields $\{\Theta_{U}\}_{U \in \mathscr{U}}$ which gives rise to a principal connection 
\begin{equation}
\Theta = \begin{pmatrix} \sqrt{-1}\Theta_{1} & \cdots & 0 \\
 \vdots & \ddots & \vdots \\
 0 & \cdots & \sqrt{-1}\Theta_{2r}\end{pmatrix} \in \Omega^{1} \big ({\rm{U}}({\bf{E}});{\text{Lie}}(T^{2r}) \big),
\end{equation}
satisfying the following:
\begin{enumerate}
\item ${\rm{d}}\Theta_{j} = \pi^{\ast}(\psi_{j})$, such that $\frac{\psi_{1}}{2\pi} \in c_{1}(\mathscr{O}_{X_{P}}(k)), \frac{\psi_{2}}{2\pi} \in c_{1}({\bf{F}}_{1}), \dots, \frac{\psi_{2r}}{2\pi} \in c_{1}({\bf{F}}_{2r-1})$;
\item $\psi_{1},\ldots,\psi_{2r}$ are $G$-invariant $(1,1)$-forms;
\item $\Lambda_{\omega_{0}}(\psi_{j}) = 0$, $\forall j = 2,\ldots, 2r$.
\end{enumerate}
Combining the above data with $\omega_{0} = \lambda(k,t)\rho_{0}$ (see Eq. (\ref{einsteinconstant})), we obtain a $t$-Gauduchon Ricci-flat Hermitian structure $(\Omega,\mathbbm{J})$ on ${\rm{U}}({\bf{E}})$, such that 
\begin{enumerate}
\item[(a)] $\mathbbm{J}|_{\ker(\Theta)}$ is given by the lift of the complex structure of $X_{P}$;
\item[(b)] $\mathbbm{J}(\Theta_{2j-1}) = -\Theta_{2j}$, $j = 1,\ldots,r$;
\end{enumerate}
and
\begin{equation}
 \Omega = \pi^{\ast}(\omega_{0}) + \frac{1}{2}{\rm{tr}} \big ( \Theta \wedge \mathbbm{J}\Theta\big ).
\end{equation}
By means of a similar argument one can also describe explicitly the balanced Hermitian structure $(\Omega,\mathbbm{J})$ provided by Theorem \ref{theorem2proof}. 

\section{Examples on $T^{2}$-bundles over ${\mathbbm{P}}(T_{{\mathbbm{P}^{2}}})$} In this subsection, we will explore some explicit computations in concrete cases. The main purpose is to illustrate our main results providing some new examples of $t$-Gauduchon Ricci-flat Hermitian structures and balanced Hermitian structures on principal $T^{2}$-bundles over the Fano threefold ${\mathbbm{P}}(T_{{\mathbbm{P}^{2}}})$. Consider the complex Lie group $G^{\mathbbm{C}} = {\rm{SL}}_{3}(\mathbbm{C})$. In this case, the structure of the associated Lie algebra $\mathfrak{sl}_{3}(\mathbbm{C})$ can be completely determined by means of its Dynkin diagram
\begin{center}
${\dynkin[labels={\alpha_{1},\alpha_{2}},scale=3]A{oo}} $
\end{center}
Fixed the Cartan subalgebra $\mathfrak{h} \subset \mathfrak{sl}_{3}(\mathbbm{C})$ of diagonal matrices, we have the associated simple root system given by $\Delta  = \{\alpha_{1},\alpha_{2}\}$, such that 
\begin{center}
$\alpha_{j}({\rm{diag}}(d_{1},d_{2},d_{3})) = d_{j} - d_{j+1}$, $j = 1,2$,
\end{center}
$\forall {\rm{diag}}(d_{1},d_{2},d_{3}) \in \mathfrak{h}$. The set of positive roots in this case is given by 
\begin{center}
$\Phi^{+} = \{\alpha_{1}, \alpha_{2}, \alpha_{3} = \alpha_{1} + \alpha_{2}\}$. 
\end{center}
Considering the Cartan-Killing form\footnote{In this case, we have $\kappa(X,Y) = 6{\rm{tr}}(XY), \forall X,Y \in \mathfrak{sl}_{3}(\mathbbm{C})$, see for instance \cite[Chapter 10, \S 4]{procesi2007lie}.} $\kappa(X,Y) = {\rm{tr}}({\rm{ad}}(X){\rm{ad}}(Y)), \forall X,Y \in \mathfrak{sl}_{3}(\mathbbm{C})$, it follows that $\alpha_{j} = \kappa(\cdot,h_{\alpha_{j}})$, $j =1,2,3$, such that\footnote{Notice that $\langle \alpha_{j},\alpha_{j} \rangle = \alpha_{j}(h_{\alpha_{j}}) = \frac{1}{3}, \forall j = 1,2,3.$} 
\begin{equation}
h_{\alpha_{1}} =\frac{1}{6}(E_{11} - E_{22}), \ \ h_{\alpha_{2}} =\frac{1}{6}(E_{22} - E_{33}), \ \ h_{\alpha_{3}} =\frac{1}{6}(E_{11} - E_{33}),
\end{equation}
here we consider the matrices $E_{ij}$ as being the elements of the standard basis of ${\mathfrak{gl}}_{3}(\mathbbm{C})$. Moreover, we have the following relation between simple roots and fundamental weights:
\begin{center}
$\displaystyle{\begin{pmatrix}
\alpha_{1} \\ 
\alpha_{2}
\end{pmatrix} = \begin{pmatrix} \ \ 2 & -1 \\
-1 & \ \ 2\end{pmatrix} \begin{pmatrix}
\varpi_{\alpha_{1}} \\ 
\varpi_{\alpha_{2}}
\end{pmatrix}, \ \ \ \begin{pmatrix}
\varpi_{\alpha_{1}} \\ 
\varpi_{\alpha_{2}}
\end{pmatrix} = \frac{1}{3}\begin{pmatrix} 2 & 1 \\
1 & 2\end{pmatrix} \begin{pmatrix}
\alpha_{1} \\ 
\alpha_{2}
\end{pmatrix}},$
\end{center}
here we consider the Cartan matrix $C = (C_{ij})$ of $\mathfrak{sl}_{3}(\mathbbm{C})$ given by 
\begin{equation}
\label{Cartanmatrix}
C = \begin{pmatrix}
 \ \ 2 & -1 \\
-1 & \ \ 2 
\end{pmatrix}, \ \ C_{ij} = \frac{2\langle \alpha_{i}, \alpha_{j} \rangle}{\langle \alpha_{j}, \alpha_{j} \rangle},
\end{equation}
for more details on the above subject, see for instance \cite{Humphreys}. Fixed the standard Borel subgroup $B \subset {\rm{SL}}_{3}(\mathbbm{C})$, i.e.,
\begin{center}
$B = \Bigg \{ \begin{pmatrix} \ast & \ast & \ast \\
0 & \ast & \ast \\
0 & 0 & \ast \end{pmatrix} \in {\rm{SL}}_{3}(\mathbbm{C})\Bigg\},$
\end{center}
we consider the flag variety obtained from $I = \emptyset$, i.e., the homogeneous Fano threefold given by the Wallach space ${\mathbbm{P}}(T_{{\mathbbm{P}^{2}}}) = {\rm{SL}}_{3}(\mathbbm{C})/B$. In this particular case, we have the following facts:
\begin{enumerate}
\item[(i)] $H^{2}({\mathbbm{P}}(T_{{\mathbbm{P}^{2}}}),\mathbbm{R}) = H^{1,1}({\mathbbm{P}}(T_{{\mathbbm{P}^{2}}}),\mathbbm{R}) = \mathbbm{R}[{\bf{\Omega}}_{\alpha_{1}}] \oplus \mathbbm{R}[{\bf{\Omega}}_{\alpha_{2}}]$;
\item[(ii)] ${\rm{Pic}}({\mathbbm{P}}(T_{{\mathbbm{P}^{2}}})) = \Big \{ \mathscr{O}_{\alpha_{1}}(s_{1}) \otimes \mathscr{O}_{\alpha_{1}}(s_{1}) \ \Big | \ s_{1}, s_{2} \in \mathbbm{Z}\Big \}$.
\end{enumerate}
Let $\vartheta_{0}$ be the unique ${\rm{SU}}(3)$-invariant K\"{a}hler metric on ${\mathbbm{P}}(T_{{\mathbbm{P}^{2}}})$, such that $[\vartheta_{0}] = c_{1}({\mathbbm{P}}(T_{{\mathbbm{P}^{2}}}))$. Since $\lambda({\bf{K}}_{{\mathbbm{P}}(T_{{\mathbbm{P}^{2}}})}^{-1}) = \delta_{B} = 2(\varpi_{\alpha_{1}} + \varpi_{\alpha_{2}})$, from Eq. (\ref{ChernFlag}), it follows that 
\begin{equation}
\vartheta_{0} = \langle \delta_{B}, \alpha_{1}^{\vee} \rangle {\bf{\Omega}}_{\alpha_{1}} + \langle \delta_{B}, \alpha_{2}^{\vee} \rangle {\bf{\Omega}}_{\alpha_{2}} = 2 \big ({\bf{\Omega}}_{\alpha_{1}} + {\bf{\Omega}}_{\alpha_{2}}\big),
\end{equation}
in particular, notice that $\lambda([\vartheta_{0}]) = \delta_{B}$. In order to construct the Hermitian metrics provided by Theorem \ref{Theorem1} and Theorem \ref{theorem2proof}, we need to describe the subgroup
\begin{equation}
{\rm{Pic}}^{0}_{\vartheta_{0}}({\mathbbm{P}}(T_{{\mathbbm{P}^{2}}})) = \Big \{ {\bf{L}} \in {\rm{Pic}}({\mathbbm{P}}(T_{{\mathbbm{P}^{2}}})) \ \Big | \ \deg_{\vartheta_{0}}({\bf{L}}) = 0\Big \}.
\end{equation}
Given ${\bf{L}} = \mathscr{O}_{\alpha_{1}}(s_{1}) \otimes \mathscr{O}_{\alpha_{1}}(s_{1})$, we have 
\begin{equation}
\deg_{\vartheta_{0}}({\bf{L}}) = 0 \iff s_{1}\mathcal{Q}_{\vartheta_{0}}({\bf{\Omega}}_{\alpha_{1}},\vartheta_{0}) + s_{2}\mathcal{Q}_{\vartheta_{0}}({\bf{\Omega}}_{\alpha_{2}},\vartheta_{0}) = 0.
\end{equation}
From Eq. (\ref{contractionvolume}), it follows that 
\begin{equation}
\deg_{\vartheta_{0}}({\bf{L}}) = 0 \iff s_{1}\Lambda_{\vartheta_{0}}({\bf{\Omega}}_{\alpha_{1}}) + s_{2}\Lambda_{\vartheta_{0}}({\bf{\Omega}}_{\alpha_{2}}) = 0.
\end{equation}
The coefficients of the above linear equation can be computed by means of Eq. (\ref{contraction}), and using the that
\begin{equation}
\langle \lambda,\alpha_{1}^{\vee} \rangle = a, \ \ \langle \lambda,\alpha_{2}^{\vee} \rangle = b, \ \ \langle \lambda,\alpha_{3}^{\vee} \rangle = a + b,
\end{equation}
for every $\lambda = a\varpi_{\alpha_{1}} + b \varpi_{\alpha_{2}}$, $a,b \in \mathbbm{Z}$. Therefore, we have 
\begin{equation}
\deg_{\vartheta_{0}}({\bf{L}}) = 0 \iff \frac{3}{4}(s_{1} + s_{2}) = 0.
\end{equation}
From above, we conclude that ${\rm{Pic}}^{0}_{\vartheta_{0}}({\mathbbm{P}}(T_{{\mathbbm{P}^{2}}})) = \big \langle \mathscr{O}_{\alpha_{1}}(-1)\otimes \mathscr{O}_{\alpha_{2}}(1) \big \rangle $. Given $k \in \mathbbm{Z}^{\times}$ and a real number $t < 1$, consider 
\begin{equation}
\lambda(k,t) := \frac{1-t}{2} \bigg [ \frac{k^{2}\dim_{\mathbbm{C}}({\mathbbm{P}}(T_{{\mathbbm{P}^{2}}}))}{{\bf{I}}({\mathbbm{P}}(T_{{\mathbbm{P}^{2}}}))^{2}}\bigg] = \frac{3}{8}(1-t)k^{2},
\end{equation}
and define $\omega_{0} := \lambda(k,t)\rho_{0}$, such that $\rho_{0} = 2 \pi \vartheta_{0}$ (Eq. (\ref{riccinorm})). Given ${\bf{F}} \in {\rm{Pic}}^{0}_{\vartheta_{0}}({\mathbbm{P}}(T_{{\mathbbm{P}^{2}}}))$, such that 
\begin{equation}
{\bf{F}} = \mathscr{O}_{\alpha_{1}}(-\ell)\otimes \mathscr{O}_{\alpha_{2}}(\ell),
\end{equation}
for some $\ell \in \mathbbm{Z}$, $\ell \neq 0$, we construct the short exact sequence of holomorphic vector bundles
\begin{center}
\begin{tikzcd} 0 \arrow[r] & \mathscr{O}_{{\mathbbm{P}}(T_{{\mathbbm{P}^{2}}})}(k)  \arrow[r] & {\bf{E}}  \arrow[r]  & {\bf{F}} \arrow[r] & 0 \end{tikzcd}
\end{center}
In other words, we consider 
\begin{equation}
{\bf{E}} =  \mathscr{O}_{{\mathbbm{P}}(T_{{\mathbbm{P}^{2}}})}(k) \oplus \underbrace{\Big (  \mathscr{O}_{\alpha_{1}}(-\ell)\otimes \mathscr{O}_{\alpha_{2}}(\ell)\Big )}_{{\bf{F}}}.
\end{equation}
Notice that, since ${\bf{I}}({\mathbbm{P}}(T_{{\mathbbm{P}^{2}}})) = 2$ (Remark \ref{Fanoindex}), it follows that 
\begin{equation}
\mathscr{O}_{{\mathbbm{P}}(T_{{\mathbbm{P}^{2}}})}(1):= \frac{1}{{\bf{I}}({\mathbbm{P}}(T_{{\mathbbm{P}^{2}}}))}{\bf{K}}_{{\mathbbm{P}}(T_{{\mathbbm{P}^{2}}})}^{-1} =  \mathscr{O}_{\alpha_{1}}(1)\otimes \mathscr{O}_{\alpha_{2}}(1)
\end{equation}
From above, given an open set $U \subset {\mathbbm{P}}(T_{{\mathbbm{P}^{2}}})$ which trivializes both ${\bf{E}} \to {\mathbbm{P}}(T_{{\mathbbm{P}^{2}}})$ and $B \hookrightarrow {\rm{SL}}_{3}(\mathbbm{C}) \to {\mathbbm{P}}(T_{{\mathbbm{P}^{2}}})$, and denoting by $(u,w)$ the fiber coordinate in ${\bf{E}}|_{U}$, we can construct a Hermitian structure $\bf{h}$ on ${\bf{E}}$ by gluing the local Hermitian structures 
\begin{equation}
\label{locHermitian}
{\bf{h}}_{U} = \begin{pmatrix} v & w\end{pmatrix} \begin{pmatrix} \displaystyle \frac{1}{||{\bf{s}}_{U}v_{\varpi_{\alpha_{1}}}^{+}||^{2k} ||{\bf{s}}_{U}v_{\varpi_{\alpha_{2}}}^{+}||^{2k}} & 0 \\ 
0 & \displaystyle \frac{ ||{\bf{s}}_{U}v_{\varpi_{\alpha_{1}}}^{+}||^{2\ell}} {||{\bf{s}}_{U}v_{\varpi_{\alpha_{2}}}^{+}||^{2\ell}} \end{pmatrix} \begin{pmatrix} \overline{v} \\ \overline{w}\end{pmatrix},
\end{equation}
where ${\bf{s}}_{U} \colon U \subset {\mathbbm{P}}(T_{{\mathbbm{P}^{2}}})\to {\rm{SL}}_{3}(\mathbbm{C})$ is some local section, here we consider $||\cdot||$ defined by some fixed ${\rm{SU}}(3)$-invariant inner product on $V(\varpi_{\alpha_{k}})$, $k = 1,2$. Notice that ${\bf{h}} = {\bf{h}}_{1}\oplus {\bf{h}}_{2}$, where ${\bf{h}}_{1}$ and ${\bf{h}}_{2}$ are Hermitian structures on $\mathscr{O}_{{\mathbbm{P}}(T_{{\mathbbm{P}^{2}}})}(k)$ and ${\bf{F}}$, respectively, such that
\begin{equation}
{\bf{h}}_{1}|_{U} = \frac{v\overline{v}}{||{\bf{s}}_{U}v_{\varpi_{\alpha_{1}}}^{+}||^{2k} ||{\bf{s}}_{U}v_{\varpi_{\alpha_{2}}}^{+}||^{2k}} \ \ \ {\text{and}} \ \ \ {\bf{h}}_{2}|_{U} = \frac{ ||{\bf{s}}_{U}v_{\varpi_{\alpha_{1}}}^{+}||^{2\ell}} {||{\bf{s}}_{U}v_{\varpi_{\alpha_{2}}}^{+}||^{2\ell}} w\overline{w}.
\end{equation}
\begin{remark}
\label{HYMinstantonlinebundle}
Consider $U = U^{-}(B) \subset {\mathbbm{P}}(T_{{\mathbbm{P}^{2}}})$, such that
\begin{equation}
U^{-}(B) = \Bigg \{ \begin{pmatrix}
1 & 0 & 0 \\
z_{1} & 1 & 0 \\                  
z_{2}  & z_{3} & 1
 \end{pmatrix}B \ \Bigg | \ z_{1},z_{2},z_{3} \in \mathbbm{C} \Bigg \} \ \ \ ({\text{opposite big cell}}),
\end{equation}
The open set above is dense and contractible, so it trivializes the desired bundles over ${\mathbbm{P}}(T_{{\mathbbm{P}^{2}}})$. By taking the local section ${\bf{s}}_{U} \colon U^{-}(B) \to {\rm{SL}}_{3}(\mathbbm{C})$, such that ${\bf{s}}_{U}(nB) = n$, $\forall nB \in U^{-}(B)$, and considering
\begin{center}
$V(\varpi_{\alpha_{1}}) = \mathbbm{C}^{3}$ \ \  and \ \ $V(\varpi_{\alpha_{2}}) = \bigwedge^{2}(\mathbbm{C}^{3}),$
\end{center}
where $v_{\varpi_{\alpha_{1}}}^{+} = e_{1}$, and $v_{\varpi_{\alpha_{2}}}^{+} = e_{1} \wedge e_{2}$, fixed $||\cdot||$ defined by the standard ${\rm{SU}}(3)$-invariant inner product on $\mathbbm{C}^{3}$ and $\bigwedge^{2}(\mathbbm{C}^{3})$, we obtain
\begin{itemize}
\item ${\bf{h}}_{1}|_{U^{-}(B)} = \displaystyle \frac{v\overline{v}}{\bigg ( 1 + \displaystyle \sum_{i = 1}^{2}|z_{i}|^{2} \bigg )^{k} \bigg (1 + |z_{3}|^{2} + \bigg | \det \begin{pmatrix}
z_{1} & 1  \\                  
z_{2}  & z_{3} 
 \end{pmatrix} \bigg |^{2} \bigg )^{k}}$;
 \item ${\bf{h}}_{2}|_{U^{-}(B)} = \frac{ \bigg ( 1 + \displaystyle \sum_{i = 1}^{2}|z_{i}|^{2} \bigg )^{\ell}  w \overline{w}} {\displaystyle \bigg (1 + |z_{3}|^{2} + \bigg | \det \begin{pmatrix}
z_{1} & 1  \\                  
z_{2}  & z_{3} 
 \end{pmatrix} \bigg |^{2} \bigg )^{\ell}}.$
\end{itemize}
From above we obtain an explicit local description for the Hermitian structure ${\bf{h}}$ described in Eq. (\ref{locHermitian}).
\end{remark}
Therefore, by construction, it follows that 
\begin{equation}
\frac{\sqrt{-1}}{2\pi}F({\bf{h}}_{1}) = \frac{k}{4\pi} \rho_{0} = \frac{k}{2}\vartheta_{0} \ \ \ {\text{and}} \ \ \ \frac{\sqrt{-1}}{2\pi}F({\bf{h}}_{2}) = \ell \Big ({\bf{\Omega}}_{\alpha_{2}} -  {\bf{\Omega}}_{\alpha_{1}} \Big ).
\end{equation}
Considering the associated unitary frame bundle $T^{2} \hookrightarrow {\rm{U}}({\bf{E}}) \to {\mathbbm{P}}(T_{{\mathbbm{P}^{2}}})$, we can take a principal connection
\begin{equation}
\Theta = \begin{pmatrix} \sqrt{-1} \Theta_{1} & 0 \\
 0 & \sqrt{-1} \Theta_{2}\end{pmatrix} \in \Omega^{1}\big (U({\bf{E}}),{\rm{Lie}}(T^{2}) \big),
\end{equation}
such that 
\begin{equation}
{\rm{d}} \Theta_{1} = \pi^{\ast}\Big ( \sqrt{-1} F({\bf{h}}_{1})\Big ) \ \ \ {\text{and}} \ \ \ {\rm{d}} \Theta_{2} = \pi^{\ast}\Big ( \sqrt{-1} F({\bf{h}}_{2})\Big ),
\end{equation}
where $\pi \colon {\rm{U}}({\bf{E}}) \to {\mathbbm{P}}(T_{{\mathbbm{P}^{2}}})$ is the bundle projection map. It is worth observing that 
\begin{equation}
\Theta|_{U} = \pi^{\ast}\Bigg [\frac{1}{2}{\rm{d}}^{c} \log \begin{pmatrix} ||{\bf{s}}_{U}v_{\varpi_{\alpha_{1}}}^{+}||^{2k}||{\bf{s}}_{U}v_{\varpi_{\alpha_{2}}}^{+}||^{2k} & 0 \\
 0 & \displaystyle \frac{ ||{\bf{s}}_{U}v_{\varpi_{\alpha_{2}}}^{+}||^{2\ell}} {||{\bf{s}}_{U}v_{\varpi_{\alpha_{1}}}^{+}||^{2\ell}}\end{pmatrix}\Bigg ] + a_{U}^{-1}{\rm{d}}a_{U},
\end{equation}
where $a_{U} = {\text{diag}} \{a_{1},a_{2}\}$ (fiber coordinates). From above, we can equip ${\rm{U}}({\bf{E}})$ with a Hermitian structure $(\Omega,\mathbbm{J})$, which can be described as follows:
\begin{enumerate}
\item[(a)] $\mathbbm{J}|_{\ker(\Theta)}$ is given by the lift of the complex structure of ${\mathbbm{P}}(T_{{\mathbbm{P}^{2}}})$;
\item[(b)] $\mathbbm{J}(\Theta_{1}) = -\Theta_{2}$;
\item[(c)] $\Omega :=  \pi^{\ast}(\omega_{0}) + \frac{1}{2}{\rm{tr}} \big ( \Theta \wedge \mathbbm{J} \Theta\big ) = \frac{3}{8}(1-t)k^{2}\pi^{\ast}(\rho_{0}) + \Theta_{1} \wedge \Theta_{2}.$
\end{enumerate}
In this case, the Ricci form $\rho(\Omega,t)$ associated with the $t$-Gauduchon connection $\nabla^{t}$ satisfies
\begin{equation}
\rho(\Omega,t) = \pi^{\ast} \bigg ( \rho_{\omega_0} + \frac{t-1}{2} \sqrt{-1}\Lambda_{\omega_{0}}(F({\bf{h}}_{1})) \sqrt{-1}F({\bf{h}}_{1}) + \frac{t-1}{2} \sqrt{-1}\Lambda_{\omega_{0}}(F({\bf{h}}_{2})) \sqrt{-1}F({\bf{h}}_{2})\bigg ). 
\end{equation}
Since $\rho_{\omega_0} = \rho_{0} = 2\pi \vartheta_{0}$, $\sqrt{-1}F({\bf{h}}_{1}) = \pi k\vartheta_{0}$, and 
\begin{equation}
\displaystyle \omega_{0} = \frac{3}{8}(1-t)k^{2}\rho_{0}  =  \frac{3}{4}(1-t)k^{2}\pi \vartheta_{0} \Longrightarrow \Lambda_{\omega_{0}}(-) = \frac{4}{3\pi(1-t)k^{2}}\Lambda_{\vartheta_{0}}(-),
\end{equation}
it follows that 
\begin{enumerate}
\item[(F1)]$\displaystyle \sqrt{-1}\Lambda_{\omega_{0}}(F({\bf{h}}_{1})) =  \frac{4}{3(1-t)k}\Lambda_{\vartheta_{0}}(\vartheta_{0}) = \frac{4}{(1-t)k};$
\item[(F2)]$\displaystyle \sqrt{-1}\Lambda_{\omega_{0}}(F({\bf{h}}_{2})) = \frac{4\sqrt{-1}}{3\pi(1-t)k}\Lambda_{\vartheta_{0}}(F({\bf{h}}_{2})) = 0.$ 
\end{enumerate}
From above, we obtain 
\begin{equation}
\rho(\Omega,t) = \pi^{\ast} \bigg (  2\pi \vartheta_{0} + \frac{t-1}{2} \frac{4}{(1-t)k}\pi k\vartheta_{0}\bigg) = 0,
\end{equation}
i.e., $({\rm{U}}({\bf{E}}),\Omega,\mathbbm{J})$ is $t$-Gauduchon Ricci-flat ($t < 1$). In particular, we have
\begin{equation}
\rho(\Omega,1) = \pi^{\ast}(\rho_{0}) = \frac{2}{k}{\rm{d}}\Theta_{1} \Rightarrow c_{1}({\rm{U}}({\bf{E}})) = \bigg [ \frac{\rho(\Omega,1)}{2\pi}\bigg ] = 0,
\end{equation}
in other words, ${\rm{U}}({\bf{E}})$ is Calabi-Yau. Further, we obtain the following:
\begin{enumerate}
\item If $t = -1$, then $({\rm{U}}({\bf{E}}),\Omega, \mathbbm{J})$ is Strominger-Bismut Ricci-flat; 
\item If $t = 0$, then $({\rm{U}}({\bf{E}}),\Omega,\mathbbm{J})$ is Lichnerowicz Ricci-flat.
\end{enumerate}
In order to construct example of balanced Hermitian metrics, we proceed as follows. Let ${\bf{F}}_{1},{\bf{F}}_{2} \in{\rm{Pic}}^{0}_{\vartheta_{0}}({\mathbbm{P}}(T_{{\mathbbm{P}^{2}}}))$, such that 
\begin{equation}
{\bf{F}}_{1} =  \mathscr{O}_{\alpha_{1}}(-a)\otimes \mathscr{O}_{\alpha_{2}}(a) \ \ \ {\text{and}} \ \ \ {\bf{F}}_{2} =  \mathscr{O}_{\alpha_{1}}(-b)\otimes \mathscr{O}_{\alpha_{2}}(b), 
\end{equation}
where $a,b \in \mathbbm{Z}$, $a \neq 0$, $b \neq 0$, and $a \neq b$. From above, we define
\begin{equation}
{\bf{F}}:= {\bf{F}}_{1} \oplus {\bf{F}}_{2},
\end{equation}
and consider the Hermitian structure ${\bf{h}} = {\bf{h}}_{1} \oplus {\bf{h}}_{2}$, such that ${\bf{h}}_{1}$ and ${\bf{h}}_{2}$ are the Hermitian structures on ${\bf{F}}_{1}$ and ${\bf{F}}_{2}$ given, respectively, by 
$$
{\bf{h}}_{1} \myeq\frac{ \bigg ( 1 + \displaystyle \sum_{i = 1}^{2}|z_{i}|^{2} \bigg )^{a}v \overline{v}} {\displaystyle \bigg (1 + |z_{3}|^{2} + \bigg | \det \begin{pmatrix}
z_{1} & 1  \\                  
z_{2}  & z_{3} 
 \end{pmatrix} \bigg |^{2} \bigg )^{a}}  \ \ \ {\text{and}} \ \ \ {\bf{h}}_{2} \myeq\frac{ \bigg ( 1 + \displaystyle \sum_{i = 1}^{2}|z_{i}|^{2} \bigg )^{b}w \overline{w}} {\displaystyle \bigg (1 + |z_{3}|^{2} + \bigg | \det \begin{pmatrix}
z_{1} & 1  \\                  
z_{2}  & z_{3} 
 \end{pmatrix} \bigg |^{2} \bigg )^{b}},$$
for the above local description, see for instance Remark \ref{HYMinstantonlinebundle}. Considering the associated unitary frame bundle $T^{2} \hookrightarrow {\rm{U}}({\bf{F}}) \to {\mathbbm{P}}(T_{{\mathbbm{P}^{2}}})$, we can take a principal connection
\begin{equation}
\Theta = \begin{pmatrix} \sqrt{-1} \Theta_{1} & 0 \\
 0 & \sqrt{-1} \Theta_{2}\end{pmatrix} \in \Omega^{1}\big ({\rm{U}}({\bf{F}}),{\rm{Lie}}(T^{2}) \big),
\end{equation}
such that 
\begin{equation}
{\rm{d}} \Theta_{1} = \pi^{\ast}\Big ( \sqrt{-1} F({\bf{h}}_{1})\Big ) \ \ \ {\text{and}} \ \ \ {\rm{d}} \Theta_{2} = \pi^{\ast}\Big ( \sqrt{-1}  F({\bf{h}}_{2})\Big ),
\end{equation}
where $\pi \colon {\rm{U}}({\bf{F}}) \to {\mathbbm{P}}(T_{{\mathbbm{P}^{2}}})$ is the bundle projection map. Now we can equip ${\rm{U}}({\bf{F}})$ with the Hermitian structure $(\Omega,\mathbbm{J})$ described as follows:
\begin{enumerate}
\item[(a)] $\mathbbm{J}|_{\ker(\Theta)}$ is given by the lift of the complex structure of ${\mathbbm{P}}(T_{{\mathbbm{P}^{2}}})$;
\item[(b)] $\mathbbm{J}(\Theta_{1}) = -\Theta_{2}$;
\item[(c)] $\Omega =  \pi^{\ast}(\vartheta_{0}) + \frac{1}{2}{\rm{tr}} \big ( \Theta \wedge \mathbbm{J} \Theta\big ) = \pi^{\ast}(\vartheta_{0}) + \Theta_{1} \wedge \Theta_{2}.$
\end{enumerate}
By construction, we have
\begin{equation}
\delta \Omega = \pi^{\ast} (\delta \vartheta_{0}) +  \pi^{\ast} \big (\sqrt{-1}\Lambda_{\vartheta_{0}}( F({\bf{h}}_{1})) \big ) \Theta_{1} + \pi^{\ast} \big (\sqrt{-1}\Lambda_{\vartheta_{0}}( F({\bf{h}}_{2})) \big ) \Theta_{2}.
\end{equation}
Since $\delta \vartheta_{0} = 0$, and
\begin{equation}
\sqrt{-1}\Lambda_{\vartheta_{0}}( F({\bf{h}}_{1})) = \sqrt{-1}\Lambda_{\vartheta_{0}}( F({\bf{h}}_{2})) = 0,
\end{equation}
we conclude that $\delta \Omega = 0$, i.e., $({\rm{U}}({\bf{F}}),\Omega,\mathbbm{J})$ is a balanced Hermitian manifold.
\bibliographystyle{alpha}
\bibliography{bibliografia}

\end{document}